\documentclass[11pt,a4paper]{article}
\usepackage[utf8]{inputenc}
\usepackage[T1]{fontenc}
\usepackage{amsmath}
\usepackage{soul}
\usepackage{amsfonts}
\usepackage{amssymb}
\usepackage{amsthm}
\usepackage{graphicx} 
\usepackage{caption}
\usepackage{subcaption}
\usepackage[left=2.5cm,right=2.5cm,top=2cm,bottom=2.5cm]{geometry}
\usepackage[usenames,dvipsnames]{xcolor}
\usepackage{tikz}
\usepackage[normalem]{ulem}
\usepackage{comment}
\usepackage{array}
\usepackage{multirow}
\usepackage{algorithm2e}
\usepackage{hyperref}
\usepackage{dsfont}

\newtheorem{remark}{Remark}
\newtheorem{lemma}{Lemma}[section]
\newtheorem{proposition}{Proposition}[section]

\newtheorem{theorem}{Theorem}

\newtheorem{corollary}{Corollary}[section]
\newtheorem{assumption}{Assumption}

\newcommand{\R}{\mathbb{R}}

\providecommand{\keywords}[1]
{
  \small	
  \textbf{\textit{Keywords---}} #1
}

\newcommand{\nset}{\mathbb{N}}
\newcommand{\rset}{\mathbb{R}}

\newcommand{\Sb}{\mathbf{S}}

\newcommand{\va}{\text{V@R}\ensuremath{_\alpha}}

\newcommand{\cva}{\text{CV@R}\ensuremath{_\alpha}}
\newcommand{\X}{X}

\newcommand{\hZ}{\ensuremath{\hat{Z}}}
\newcommand{\ee}{\ensuremath{\mathfrak{e}}}
\newcommand{\hg}{\ensuremath{\hat{g}}}

\newcommand{\un}{\ensuremath{\mathds{1}}}
\newcommand{\PP}{\mathbb{P}} 
\newcommand{\PE}{\mathbb{E}} 
\newcommand{\usl}{u^\star_{\lambda}} 
\newcommand{\Sm}{\mathcal{S}_m} 



\newcommand{\var}{\theta}

\newcommand{\Dp}{\mathcal{D}_{\varphi}}
\newcommand{\DP}{\mathcal{D}_{\Phi}}
\newcommand{\norm}[2]{\big\| #1 \big\|_{#2}}
\newcommand{\Db}{{\Delta}}

\begin{document}
\title{{CV@R  penalized portfolio optimization} with {biased} stochastic mirror descent}
\author{M. Costa\footnote{Institut de Math\'ematiques de Toulouse. CNRS UMR 5219. Universit\'e Paul Sabatier, 118 route de Narbonne, F-31062 Toulouse cedex 09. Email: manon.costa@math.univ-toulouse.fr}, S. Gadat\footnote{Toulouse School of Economics, CNRS UMR 5314, Université Toulouse 1 Capitole, Esplanade de l’Université, Toulouse, France. Institut Universitaire de France. Email: sebastien.gadat@tse-fr.eu}, L. Huang\footnote{INSA de Toulouse, IMT UMR CNRS 5219, Université de Toulouse, 135 avenue de Rangueil 31077 Toulouse Cedex 4 France. Email: lorick.huang@insa-toulouse.fr.}}
\maketitle

\maketitle

\begin{abstract}
This article studies and solves the problem of optimal portfolio allocation with {CV@R penalty}  when dealing with imperfectly simulated financial assets.
We use a  Stochastic biased Mirror Descent to find optimal resource allocation for a portfolio whose underlying assets cannot be generated exactly and may only be approximated with a numerical scheme that satisfies suitable error bounds, under a risk management constraint.
We establish almost sure asymptotic properties as well as the rate of convergence for the averaged algorithm. We then focus on the optimal tuning of the overall procedure to obtain an optimized numerical cost. Our results are then illustrated numerically on simulated as well as real data sets.


\end{abstract}
\keywords{Stochastic Mirror Descent\and biased observations\and  risk management constraint\and portfolio selection\and discretization.}
\section{Introduction}
\label{intro}
Given a portfolio of financial assets, what is the best way to allocate resources under a risk management constraint? 
Without any fixed constraint, it is likely that the asset with the highest expected return would be favored, leading to a trivial optimization problem. 
However, ethical issues arise when this latter return possesses high variability, since even though in the long run some profits are expected, large losses can occur in between wins.
This problematic is especially relevant for institutions with some large capital investments such as banks and insurances.

Hence, the necessity for regulation appears. As a matter of fact, this is precisely the point of the Basel III banking regulation that has been implemented in response to the various past financial crises.
Risks can be quantified with various metrics, and policies developed using one or another metric could lead to different investment optimization problems and strategies. 
In this work, we consider a { penalty} on the conditional value at risk denoted by CV@R (also referred to as the expected shortfall in the literature) of the portfolio, which stands for a coherent measure of risk of an investment \cite{Artzner99,Rockafellar2002}. We refer to Equation \eqref{def:cvar} in the next section for a formal definition. 
Heuristically, the CV@R measures how big the average loss is, at a given rate for worst scenarii (say $10\%$ for example). Our problem is therefore to optimize the expected portfolio returns while keeping the risk (measured via CV@R) under a fixed threshold. 

Portfolio optimization under CV@R constraint has already been investigated in previous works, we can notably cite Rockafellar and Uryasev \cite{Rockafellar2000} who develop a Monte-Carlo strategy to approximate the CV@R of the portfolio. These results have been refined in subsequent works of the authors, notably in Krokhmal et al. \cite{Krokhmal:2001}, Rockafellar et Uryasev \cite{Rockafellar2002} and more recently in \cite{rockafellar:royset:2014} , but basically the authors of these former works use the approximation of the V@R and CV@R with a batch empirical sum to feed an optimization algorithm via classical tools of convex analysis.

The approximation with a Monte-Carlo empirical mean to handle the V@R and the CV@R can be traced back to Ben Tal and Teboulle \cite{Teboulle:1986} and Pluf \cite{Pflug}, who link these two quantities by a $\min$ and $\arg\min$ of a convex function.
This article spun a series of articles dealing with the Robbins-Monro approximation of V@R and CV@R. We can cite a series of papers by Bardou, Frikha and Pagès \cite{bardou2016cvar},  \cite{bardou2009computing},  and the book \cite{bardou:etal:book}, as well as \cite{frikha:huang:2015} all dealing with Robbins Monro stochastic approximation. However, as in \cite{frikha:huang:2015}, a question still remains when the underlying asset's distribution is unobtainable. 
A natural solution to that problem would be to approximate the underlying distribution, introducing a bias.
This trend of biased stochastic approximation is very natural but presents some challenges in itself, which can be addressed in multiple ways. 
We can cite the works of \cite{atchade:fort:moulines:2017} or \cite{laruelle:pages:2012} (see also the references therein) for an overview of the techniques used for biased stochastic approximation algorithms.

In mathematical finance, assets are commonly modeled using stochastic differential equations, and in general, these equations do not have any explicit solutions. A natural approach to this issue is simply to replace the SDE with a discretization scheme. However, a such discretization naturally introduces a bias in the stochastic approximation procedure, and the effect of this bias should be addressed to finely tune the optimization algorithm. Discretization for SDEs has been studied since the pioneering work of Talay and Tubaro \cite{talay:tubaro:1990}, where strong regularity is assumed on the coefficients. As results developed, regularity assumptions have been dropped, and sharp bounds have been derived for both weak and strong error (see \textit{e.g.} \cite{kloeden:platen:1999}) on specific stochastic models for specific discretization schemes (see \textit{e.g.} \cite{alfonsi:2005,alfonsi:2010} among others).

In our paper, we assume in addition that one of the asset in our portfolio is risk-less, which allows to model debt obligations, or treasury bonds. In mathematical terms, we consider a stochastic rate, modeled by a Cox-Ingersoll-Ross {(shortened as CIR below)} \cite{CIR} process and we use this rate for the growth ratio of one component of our portfolio (see Subsection \ref{portfolio:dynamics} below the definition).
It can be shown that CIR processes can be calibrated to always stay positive (which is desirable for modeling rates - but sometimes not necessary). Unfortunately, positivity is not preserved with the usual explicit Euler scheme. This fact drives us, when discretizing the CIR, to use a drift-implicit Euler scheme, introduced in \cite{alfonsi:2005}, that is known to preserve positivity (see Section \ref{subsec:discretization} and Proposition \ref{alfonsi:discr:CIR} below for more details). Thankfully, rate of convergence for the approximation of the CIR process has been widely studied, see \textit{e.g.} \cite{Dereich_neuenkirch2012,alfonsi:2013} and these quantitative results may be exploited to control the bias and its propagation in the specific terms induced by some functional related to the CV@R variational formulation.

Finally, we also impose a reinvestment condition, saying that the strategy must reallocate all funds available. 
This means that if we have an initial capital of $V$ and $n$ elements in our portfolio, we are looking for a vector $(v_1, \dots, v_n)$ such that $v_1+ \cdots + v_n= V$.
Equivalently, we consider $u=(u_1,\dots ,u_n)$ in the simplex: $u_i \in [0,1]$ and $u_1 + \dots + u_n=1$ and look for an optimal set of weights $u$.
Note that this is asking each weights to be positive. This prevents short selling, which is the practice of selling a stock one does not own in hopes of buying it back in the future at a lower price, and is yet another behavior that is regulated by   financial institutions. 

In order to determine a strategy $u=(u_1,\dots ,u_n)$ in the simplex, one natural idea would be to derive an algorithm without the constraint $u_1 + \dots + u_n=1$, and project back the result over the simplex.
However, projection over the simplex tends to favor extremal points, which slows down the convergence of algorithms while projection is itself an additional step that may be numerically costly when some high-dimensional portfolio are considered. To overcome this issue, we set up a stochastic mirror descent, as pioneered by Nemirovskii \& Yudin \cite{Nemirovski:1983}. 
Mirror descent (and its variants) stands for a widely used class of optimization algorithms, whose main interest is to avoid a costly projection into the set of constraints by replacing the Euclidean distance by a Bregman divergence in the second order Taylor expansions. It may be adapted to the stochastic framework with the help of carefully tuned step-size sequences, we can cite among others \cite{nesterov:2009} that initiates the use of a noisy stochastic gradient in the mirror descent,  
 \cite{Lan:2012} for specific derivations on convex optimization problems on the simplex 
 and more recently Zhou et al \cite{zhou:etal:2017} that derives some results on the a.s. convergence in the general case with asymptotic pseudo-trajectories, that are common tools of random dynamical systems (see \textit{e.g.} \cite{benaim1996asymptotic}).

In summary, to find an investment strategy while managing risk, we will use a stochastic mirror descent with an approximation of the portfolio, combined with a fine
 approximation for the CV@R. The discretization of the Portfolio is handled by an implicit Euler scheme.
This discretization feeds the approximation of the V@R and the CV@R, which is done through a biased Robbins-Monro algorithm, which in turn feeds a stochastic mirror descent to determine the optimal portfolio allocation. Even if every individual steps have a known (sometimes involved) solution, we show that plugging-in these solutions together yields a strategy that solves the constrained optimization problem. {Our main results are as follows:
\begin{itemize}
\item First, using some sophisticated tools of stochastic approximation (ODE method) and of mirror descent (Fenchel conjugate among others), we establish the convergence of our biased procedure towards the minimum of our penalized criterion.
\item Second, we establish some new sharp probabilistic upper bounds dedicated to the simulation of the CIR process that are necessary to assess the influence of the number of assets $m$ on our stochastic algorithm. These bounds are at the cornerstone of our work and of crucial importance for the fine tuning of the numerical implicit Euler scheme.
\end{itemize}}

 In what follows, we carefully study the influence of the number of considered assets and of the discretization step-size on the numerical cost of the overall procedure. We then use our method to construct a Markowitz portfolio envelope \cite{markowitz:1952} that crosses the risk (built with the CV@R) and the expected return to identify the optimal investment strategy.

This article is organized as follows. In Section \ref{sec:cvar_SMD} we detail the optimization problem and the stochastic mirror descent that we propose. We state the almost sure convergence result as well as the non asymptotic bounds that are proved in Appendix \ref{app:proofs_smd}. Then in Section \ref{sec:portfolio} we describe the structure of a portfolio of interest and the simulation strategy. In particular we highlight the link between the step size of the discretization and the bias that will be induced in the SMD algorithm. Finally, we illustrate our results in Section \ref{sec:num} both on simulated and on real data.

\section{\cva constrained mean value optimization}
\label{sec:cvar_SMD}

\subsection{Financial model and constrained optimization}

We consider the relative return of a portfolio, described by a random vector $Z=(Z_1,\dots, Z_m)$, that is composed of $m$ assets whose capital at the initialization time $t_0=0$ is scaled to $1$. We address the question of the optimal investment strategy in order to maximize the mean relative return at a fixed time horizon $t_0+T$ where $T=1$,  \textit{i.e.} each random variable $Z_i$ corresponds to $Z_i=\frac{A_i(T)}{A_i(0)}-1$ where $A_i(t)$ is the price of asset $i$ at time $t$.
\\
An investment strategy  corresponds to an allocation of the initial capital, modeled by a collection of $m$ positive weights $u=(u_1, \dots, u_m)$ that belongs to the $m-1$ dimensional simplex denoted by $\Db_m$ and defined as:
$$
\Db_m := \Big\{ u \in \rset_+^m \, : \, \sum_{i=1}^m u_i = 1\Big\}.
$$
We are interested in a constrained optimization of the mean of the random variable $\langle Z,u\rangle$ defined by:
\begin{equation*}
\langle Z,u\rangle = \sum_{i=1}^m u_i Z_i.
\end{equation*}
The constraint we are using involves a risk measure, named the conditional value at risk, which quantifies the mean value of a loss given that a loss has occurred. More precisely, if $\alpha$ refers to a chosen risk level, $\va(u)$ corresponds to the classical statistical $\alpha-$quantile defined as:
$$
\va(u) := \sup \{ q \in \rset \, : \, \mathbb{P}[\langle Z,u\rangle\le q] \leq \alpha\},
$$
while $\cva$, that defines the active constraint we will use, is the mean value of the \textit{loss} when $\langle Z,u\rangle$ is below $\va$, namely:
\begin{equation}\label{def:cvar}
\cva(u) = \mathbb{E}[-\langle Z,u\rangle\, \vert \, \langle Z,u\rangle \le \va(u)].
\end{equation}
Note that here, we implicitly assume that the risk level $\alpha$ is chosen such that the associated quantile $\va(u)$ is negative, which implies that $\cva$ is a positive quantity.

Then, for any fixed positive level $M$, the optimization problem of $u \longmapsto \mathbb{E}[\langle Z,u\rangle]$  with \cva \, constraints we are interested in, is the problem defined by:

\begin{align*}
\mathcal{P}_{M} &:= \arg\displaystyle\max_{u \in \Db_m} \Big\{ \sum_{i=1}^m u_i \mathbb{E}[Z_i] \, : \, \cva(u) \leq M \Big\} \nonumber\\
& = \arg\displaystyle\min_{u \in \Db_m} \Big\{ - \sum_{i=1}^m u_i \mathbb{E}[Z_i] \, : \, \cva(u) \leq M \Big\} 
\end{align*}

For our purpose, we also introduce the unconstrained penalized optimization problem: we consider for any $\lambda>0$ the solution $v_{\lambda}$ of the following (convex) optimization problem:
\begin{align*}
\mathcal{Q}_{\lambda} := \arg\displaystyle\min_{u \in \Db_m} \Big\{ -\sum_{i=1}^m u_i\PE[Z_i] + \lambda \cva(u) \Big\}.
\end{align*}
The (standard) following result is key for our forthcoming analysis.
\begin{proposition}\label{prop:equivalence}
The collection of the two convex problems $(\mathcal{P}_M)_{M >0}$ and $(\mathcal{Q}_\lambda)_{\lambda>0}$ are equivalent. More precisely, for any $M>0$ that defines a feasible constraint, a solution $u_M^\star$ exists such that:
$$\exists \lambda_M^\star >0 \quad u^\star_M = \arg \min_{u \in \Db_m} \Big\{ - \sum_{i=1}^m u_i \mathbb{E}[Z_i]+\lambda_M^\star \cva(u)\Big\}.$$
Moreover, $\lambda_M^\star$ is a decreasing function of $M$.
Oppositely, any solution $v_\lambda$ of $\mathcal{Q}_{\lambda}$ solves $\mathcal{P}_{M}$ with $M=\cva(v_\lambda)$.
\end{proposition}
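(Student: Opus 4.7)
The statement is the classical equivalence between a constrained convex program and its penalized (Lagrangian) counterpart, applied to the CV@R-constrained mean maximization. My plan is to rely on convex duality combined with the elementary comparison argument that the penalized objective is an exact relaxation at the right multiplier.

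First, I would note the basic structural facts on which everything rests: the objective $u \mapsto -\sum_i u_i \mathbb{E}[Z_i]$ is linear, the constraint function $u \mapsto \cva(u)$ is convex as a coherent risk measure (Artzner et al.\ \cite{Artzner99}, Rockafellar--Uryasev \cite{Rockafellar2000}), and the simplex $\Db_m$ is convex and compact. Continuity of $\cva$ on $\Db_m$ together with compactness yields existence of minimizers for both $\mathcal{P}_M$ and $\mathcal{Q}_\lambda$.

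The easy direction is $\mathcal{Q}_\lambda \Rightarrow \mathcal{P}_M$: given $v_\lambda$ solving $\mathcal{Q}_\lambda$, set $M = \cva(v_\lambda)$. For any competitor $u \in \Db_m$ with $\cva(u) \le M$, the optimality of $v_\lambda$ gives
\[
-\sum_i u_i \mathbb{E}[Z_i] + \lambda \cva(u) \;\ge\; -\sum_i (v_\lambda)_i \mathbb{E}[Z_i] + \lambda \cva(v_\lambda),
\]
and rearranging using $\cva(u) \le M = \cva(v_\lambda)$ shows $-\sum_i u_i \mathbb{E}[Z_i] \ge -\sum_i (v_\lambda)_i \mathbb{E}[Z_i]$, so $v_\lambda$ solves $\mathcal{P}_M$.

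For the reverse direction $\mathcal{P}_M \Rightarrow \mathcal{Q}_{\lambda_M^\star}$, I would invoke strong convex duality: since the problem is a convex program with a single convex inequality constraint, and since $M$ defines a feasible (and, as I would assume/argue, strictly feasible) constraint, Slater's condition holds and the KKT theorem delivers a multiplier $\lambda_M^\star \ge 0$ such that any optimal $u_M^\star$ of $\mathcal{P}_M$ minimizes the Lagrangian $-\sum_i u_i \mathbb{E}[Z_i] + \lambda_M^\star(\cva(u)-M)$ over $\Db_m$, which is exactly $\mathcal{Q}_{\lambda_M^\star}$ up to an additive constant. The positivity $\lambda_M^\star>0$ comes from the complementary slackness and the fact that otherwise the unconstrained maximizer of the linear objective on $\Db_m$ (a vertex of the simplex) would already satisfy the CV@R constraint, a non-degenerate case ruled out by assuming $M$ is an active constraint.

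Finally, the monotonicity $M \mapsto \lambda_M^\star$ decreasing follows from the standard sensitivity analysis for convex programs: the value function $v(M) := \min\{-\sum_i u_i \mathbb{E}[Z_i] : u\in\Db_m,\ \cva(u)\le M\}$ is convex and non-increasing in $M$, and $-\lambda_M^\star$ belongs to its subdifferential at $M$. Convexity of $v$ then yields that the selection $M \mapsto \lambda_M^\star$ is non-increasing. I expect the main delicate point to be the justification of strict feasibility (Slater) and the exclusion of the degenerate regime where the constraint is inactive; once that is dispatched, the rest is a textbook application of Lagrangian duality and envelope/sensitivity arguments.
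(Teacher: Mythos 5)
Your proposal is correct and follows the paper's skeleton for the main equivalence: convexity of $\cva$ via coherence (sub-additivity plus positive homogeneity), Slater's condition for a feasible $M$, strong duality and the KKT conditions with complementary slackness for the direction $\mathcal{P}_M \Rightarrow \mathcal{Q}_{\lambda^\star_M}$, and the elementary comparison argument for the converse (which the paper dispatches in one line and you usefully spell out). Two points differ. First, for the monotonicity of $M \mapsto \lambda^\star_M$ the paper runs an explicit chain of Lagrangian inequalities comparing $(u_1^\star,\lambda_1^\star)$ and $(u_2^\star,\lambda_2^\star)$ at two levels $M_1<M_2$, exploiting optimality of each point in its own Lagrangian together with complementary slackness to reach $J(u_1^\star)\le J(u_1^\star)+(\lambda_1^\star-\lambda_2^\star)(M_2-M_1)$; you instead invoke the sensitivity interpretation $-\lambda^\star_M\in\partial v(M)$ for the convex value function $v$ and conclude by monotonicity of the subdifferential. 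The two are essentially the same fact, but your route is more conceptual and shorter, at the price of quoting the standard (but nontrivial) identification of multipliers with subgradients of the perturbation function, whereas the paper's computation is self-contained; in both cases one should acknowledge that the multiplier need not be unique, so the monotonicity is really a statement about any measurable selection, and it is weak monotonicity ($\ge$) rather than strict. Second, you are more careful than the paper about the strict positivity $\lambda^\star_M>0$: KKT only yields $\lambda^\star_M\ge 0$ (and indeed the paper's displayed KKT condition has $\ge 0$ while the proposition asserts $>0$), and your observation that $\lambda^\star_M=0$ forces the constraint to be inactive at the unconstrained optimum is the right way to exclude the degenerate regime; this gap is present but unaddressed in the paper's own proof.
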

{
We emphasize that this latter result is the keystone of our forthcoming analysis: instead of solving directly $\mathcal{P}_M$, we will instead define a collection of penalized problems 
$\mathcal{Q}_{\lambda}$ for several values of $\lambda$. Proposition \ref{prop:equivalence} then establishes that this approach leads to optimal portfolio allocation under CV@R constraints. We also highlight that the relationship between $M$ and $\lambda^\star_M$ remains unknown and challenging to obtain.}
The proof of this Lagrangian formulation is deferred to Appendix \ref{sec:appendix_lagrange}.
Using the result of \cite{Rockafellar2000,Krokhmal:2001} and in particular the convex representation of the $\cva$, we observe  that:
$$
\cva(u)
 = \displaystyle\min_{\theta \in \mathbb{R}} \psi_\alpha(u,\theta), 
$$
where $\psi$ is the convex coercive Lipschitz continuous and differentiable function defined by:
\begin{equation*}
\psi_\alpha(u,\theta)=\theta + \frac{1}{1-\alpha} \mathbb{E}\big[(\langle Z,u\rangle - \theta )^{+}\big], 
\end{equation*}
where $\ x ^+ = \max(0, x)$.
We emphasize that despite the definition with $()^+$, when $Z$ has an absolutely continuous distribution with respect to the Lebesgue measure, $\psi_{\alpha}$ is differentiable (we refer to \cite{bardou2016cvar} for further details on this remark).
\\
Then, we deduce from Proposition \ref{prop:equivalence} and the above representation that the collection of the optimization problems $(\mathcal{P}_M)_{M>0}$ are then described equivalently   by the convex unconstrained problem:
\begin{equation}\label{def:P22}
\mathcal{Q}_{\lambda} = \arg\displaystyle\min_{ (u,\theta) \in \Db_m \times \rset}\{ p_\lambda(u,\theta)\},
\end{equation}
where  the key function $p_\lambda$ is defined by:
\begin{equation*}
p_\lambda  (u,\theta)= - \sum_{i=1}^m u_i \mathbb{E}[Z_i] + \lambda \psi_\alpha(\theta,u).
\end{equation*}
 We emphasize that $(p_{\lambda})_{\lambda > 0}$ forms a collection of convex functions defined on $\Db_m \times \rset$, coercive w.r.t. $\theta$ and that $\mathcal{Q}_{\lambda}$ enables to avoid the \cva constraint with a suitable reparametrization of the problem.

 \subsection{Biased stochastic Mirror Descent}

To solve the minimization problem \eqref{def:P22} we are led to use stochastic approximation theory, that originates in the pioneering works of \cite{RobbinsMonro1951} and \cite{RobbinsSiegmund1971} since the (convex) function $p_\lambda$  is written as an expectation. 

However, we encounter here specific difficulties. First we need to handle the minimization over the simplex $\Db_m$. Second, the random variables $Z$ involved in $p_\lambda$ cannot generally be simulated exactly, and it will therefore be necessary to control the bias coming from the stochastic simulation. Lastly, note that even though $p_\lambda$ is differentiable, it is the expectation of a non-differentiable function of $(u,\theta)$ which will require some specific  attention in the following algorithms.

\subsubsection{Deterministic case}

Mirror Descent (MD below) originates from the pioneering work of \cite{Nemirovski:1983} and permits to naturally handle constrained optimization problems especially when the mirror/proximal mapping is explicit, which is indeed the case for a convex problem constrained on $\Db_m$ (see \textit{e.g.} \cite{Lan:2012},\cite{Bubeck:2015}). The MD approach has the nice feature to define a smooth evolution that lives inside the constrained set without adding some supplementary projection step and  ``pushes'' the frontiers of the simplex at an infinite distance from any point strictly inside 
$\Db_m$.
Though we will need to encompass the \textit{stochastic} framework, since $p_\lambda$ is defined through an expectation which is not explicit,   for the sake of clarity we describe first the \textit{deterministic} version. 

For this purpose, we first introduce the strongly convex negative entropy function over the simplex $\Db_m$ of probability distributions, defined by:
\begin{equation*}
\forall u \in \Db_m, \qquad \varphi(u) = \sum_{i=1}^m u_i \log u_i.
\end{equation*}
If $\langle \cdot , \cdot \rangle$ refers to the standard Euclidean inner product, the Bregman divergence $\Dp$ is defined by:
\begin{equation*}
\forall (u,v) \in \Db_m^2, \qquad \Dp(u,v)=\varphi(u)-\varphi(v)-\langle \nabla \varphi(v),u-v\rangle.
\end{equation*}
This Bregman divergence will induce the natural metric associated to the component $u$ involved in the problem $\mathcal{Q}_{\lambda}$.
In the same way, we also introduce the standard Euclidean square distance over $\mathbb{R}^2$, which may be viewed as a Bregman divergence. It then leads to a Bregman divergence over pairs $(\theta,u)$ and $(\theta',v)$ defined as:
\begin{equation*}
\forall (\theta,\theta') \in \rset^2, \quad \forall (u,v) \in \Db_m^2, \qquad \DP((u,\theta),(v,\theta'))=\frac{(\theta-\theta')^2}{2}+\Dp(u,v),
\end{equation*}
which is the Bregman divergence associated to the strongly convex function:
$$
\Phi(u,\theta) = \varphi(u) + \frac{\theta^2}{2}.
$$
We emphasize that the strong convexity of $\Phi$ induces the following lower bound:
\begin{equation}\label{eq:rho_convex}
\forall (\theta,\theta') \in \rset^2, \quad \forall (u,v) \in \Db_m^2, \qquad
\DP((u,\theta),(v,\theta')) \ge \frac{(\theta-\theta')^2}{2}+\frac{1}{2} \|u-v\|^2.
\end{equation}

The (deterministic) MD with a step-size sequence $(\eta_k)_{k \ge 1}$ consists in minimizing from $k$ to $k+1$ the first order Taylor approximation of $p_{\lambda}$ penalized by the Bregman divergence.
To alleviate the notations , we will denote by $\X_k = (U_k,\theta_k)$ the position of the algorithm and generally $x=(u,\theta)$ an element of $\Db_m\times \R$. We observe that the MD iterative step corresponds to:
\begin{equation*}
\X_{k+1}= \arg \min_{x \in \Db_m \times \R} \bigg\{ \langle \nabla p_{\lambda}(X_k),x -\X_k\rangle + \frac{1}{\eta_{k+1}} \DP(x,\X_{k})
\bigg\}.
\end{equation*}

A remarkable feature is that this latter minimization can be made explicit :
\begin{equation*}
X_{k+1}=\begin{pmatrix}
  U^{k+1}\\\theta^{k+1}\end{pmatrix} \quad\text{with}\quad 
  \begin{cases}
 & U^{k+1}=\frac{U^k e^{-\eta_{k+1}\partial_{u} p_\lambda(U^k,\theta^k)}}{\|U^k e^{-\eta_{k+1} \partial_{u} p_\lambda(U^k,\theta^k)}\|_1}\\
&\theta^{k+1}=\theta^{k}-\eta_{k+1} \partial_{\theta} p_\lambda(U^k, \theta^k)\\
  \end{cases}
\end{equation*}
where the first equation has to be understood within a $m$ dimensional vector structure.

For a fixed $\lambda>0$, we define $x^\star_{\lambda}=(u^\star_\lambda,\theta^\star_\lambda)$ the minimizer of $p_\lambda$.
Following the work of \cite{Beck:2003,Nemirovski:1983}, it can be shown that an averaged version of the sequence $(\X_k)_{k \ge 1}$ defined by  $$
\tilde{\chi}_n = \big(\sum_{k=0}^n \eta_k\big)^{-1}  \sum_{k=0}^n \eta_k X_k, 
$$
satisfies the next error bound:

\begin{theorem}[Proposition 1 in \cite{Lan:2012}] For any $\lambda>0$, we note by $$L = \arg \max_{x \in \Db_m \times \rset}\{ \|\nabla p_{\lambda}(x)\|\},$$ then
$$
p_{\lambda}(\tilde{\chi}_n)-p_\lambda(x^\star_\lambda) \leq  \frac{\{\Delta_\Phi^0\}^2+L^2 \displaystyle\sum_{k=1}^n \eta_k^2}{\displaystyle2 \sum_{k=1}^n \eta_k},
$$
where $\{\Delta_\Phi^0\}^2=\frac{(\theta_0-\va(u^\star_\lambda))^2}{2} + \log m $.
\end{theorem}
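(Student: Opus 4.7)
The plan is to follow the classical mirror descent analysis, which proceeds by establishing a one-step "potential decrease" inequality governed by the Bregman divergence to the optimum, then telescoping, and finally invoking convexity of $p_\lambda$ to transfer the bound to the averaged iterate $\tilde\chi_n$.

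First, I would set $V_k := \DP(x^\star_\lambda, X_k)$ and establish the key one-step inequality
\begin{equation*}
\eta_{k+1}\bigl(p_\lambda(X_k) - p_\lambda(x^\star_\lambda)\bigr) \le V_k - V_{k+1} + \frac{\eta_{k+1}^2 L^2}{2}.
\end{equation*}
To derive it, use the first-order optimality condition for $X_{k+1}$ in the mirror step: for every $x \in \Db_m \times \R$,
\begin{equation*}
\bigl\langle \eta_{k+1}\nabla p_\lambda(X_k) + \nabla\Phi(X_{k+1}) - \nabla\Phi(X_k),\, x - X_{k+1}\bigr\rangle \ge 0,
\end{equation*}
applied with $x = x^\star_\lambda$. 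Combine this with the three-point identity for Bregman divergences,
\begin{equation*}
\DP(x^\star_\lambda, X_k) - \DP(x^\star_\lambda, X_{k+1}) - \DP(X_{k+1}, X_k) = \bigl\langle \nabla\Phi(X_k) - \nabla\Phi(X_{k+1}),\, x^\star_\lambda - X_{k+1}\bigr\rangle,
\end{equation*}
to get $\eta_{k+1}\langle \nabla p_\lambda(X_k), X_{k+1} - x^\star_\lambda\rangle \le V_k - V_{k+1} - \DP(X_{k+1}, X_k)$. Now split $X_{k+1} - x^\star_\lambda = (X_{k+1} - X_k) + (X_k - x^\star_\lambda)$ and use convexity of $p_\lambda$ on the second piece to get $\langle \nabla p_\lambda(X_k), X_k - x^\star_\lambda\rangle \ge p_\lambda(X_k) - p_\lambda(x^\star_\lambda)$. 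For the first piece, apply Cauchy–Schwarz with the $1$-strong convexity of $\Phi$ from \eqref{eq:rho_convex}:
\begin{equation*}
\eta_{k+1}\langle \nabla p_\lambda(X_k), X_k - X_{k+1}\rangle \le \frac{\eta_{k+1}^2}{2}\|\nabla p_\lambda(X_k)\|^2 + \frac{1}{2}\|X_k - X_{k+1}\|^2 \le \frac{\eta_{k+1}^2 L^2}{2} + \DP(X_{k+1}, X_k),
\end{equation*}
so the $\DP(X_{k+1}, X_k)$ term cancels and the desired inequality follows.

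Next, summing the one-step inequality from $k=0$ to $n-1$ telescopes the $V_k$ terms and yields
\begin{equation*}
\sum_{k=1}^{n} \eta_k \bigl(p_\lambda(X_{k-1}) - p_\lambda(x^\star_\lambda)\bigr) \le V_0 - V_n + \frac{L^2}{2}\sum_{k=1}^{n}\eta_k^2 \le V_0 + \frac{L^2}{2}\sum_{k=1}^{n}\eta_k^2,
\end{equation*}
using $V_n \ge 0$. Applying Jensen's inequality to the convex function $p_\lambda$ along the convex combination defining $\tilde\chi_n$ converts the left-hand side into $\bigl(\sum_{k=1}^n \eta_k\bigr)\bigl(p_\lambda(\tilde\chi_n) - p_\lambda(x^\star_\lambda)\bigr)$, giving the claimed rate.

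Finally, I would bound $V_0 = \DP(x^\star_\lambda, X_0)$. Splitting it across the two coordinates, $V_0 = \frac{(\theta_0 - \theta^\star_\lambda)^2}{2} + \Dp(u^\star_\lambda, U_0)$. The characterization $\theta^\star_\lambda = \va(u^\star_\lambda)$ from the Rockafellar–Uryasev representation handles the first term. Initializing $U_0$ at the uniform distribution on $\Db_m$ gives $\Dp(u^\star_\lambda, U_0) = \sum_i u^\star_{\lambda,i}\log u^\star_{\lambda,i} + \log m \le \log m$, since the entropy is nonnegative. Altogether $V_0 \le \{\Delta_\Phi^0\}^2$, concluding the proof. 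The main obstacle is the careful chain of inequalities producing the one-step decrease: getting the Bregman term $\DP(X_{k+1}, X_k)$ to cancel exactly requires the $1$-strong convexity of $\Phi$ from \eqref{eq:rho_convex}, and aligning the three-point identity with the first-order optimality condition at $X_{k+1}$.
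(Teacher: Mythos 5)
Your argument is correct and is essentially the same machinery the paper itself deploys (for the analogous biased stochastic statement) in Appendix \ref{app:proofs_smd}: first-order optimality of the mirror step combined with the three-point lemma, Young's inequality, the strong-convexity lower bound \eqref{eq:rho_convex} to cancel $\DP(X_{k+1},X_k)$, convexity of $p_\lambda$, telescoping, Jensen, and the bound $\DP(x^\star_\lambda,X_0)\le \frac{(\theta_0-\va(u^\star_\lambda))^2}{2}+\log m$ for a uniform initialization; the paper gives no separate proof of this deterministic theorem, only the citation to \cite{Lan:2012}. Two cosmetic caveats: your displayed three-point identity is missing a sign (the right-hand side should read $\langle \nabla\Phi(X_{k+1})-\nabla\Phi(X_k),\,x^\star_\lambda-X_{k+1}\rangle$, though the inequality you then derive is the correct one), and your final bound comes out as $\bigl(2\{\Delta_\Phi^0\}^2+L^2\sum_{k}\eta_k^2\bigr)\big/\bigl(2\sum_k\eta_k\bigr)$, i.e.\ a benign factor of $2$ on the $\{\Delta_\Phi^0\}^2$ term relative to the constant quoted from \cite{Lan:2012}, which reflects a normalization convention rather than a gap.
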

It is possible to produce several variations around this previous result. 
{It is also possible to assess an upper bound on $L^2$, using in particular Equations \eqref{eq:partial_u} and \eqref{eq:partial_theta} stated below, which implies
$$
L^2 \leq \frac{m ( 1-\alpha+\lambda )^2 \mathbb{E}[\|Z\|^2] + \lambda^2 \alpha^2}{(1-\alpha)^2}.
$$
Nevertheless, we emphasize that we will not use this upper bound as our study will be more involved.} We should only keep in mind that it is possible to finely tune the step-size sequence to obtain a $\mathcal{O}(n^{-1/2})$ upper bound. We refer to \cite{Lan:2012} for further details.

\subsubsection{Stochastic Mirror Descent (SMD) using biased simulations}
In our problem, $p_{\lambda}$ and $\nabla p_{\lambda}$ involves the computation of several expectations, which are not explicit and for which the computational time of approximation may not be reasonable. If we denote by $\partial_u$ and $\partial_\theta$ the partial derivatives with respect to $u$ and $\theta$, and using that $p_{\lambda}$ is  expectation of a convex function,  we then verify that:
\begin{equation}
\label{eq:partial_u}
\partial_u p_{\lambda}(u,\theta) = 
- \mathbb{E}[Z] + \frac{\lambda \mathbb{E}[Z \un_{\langle Z,u\rangle \ge \theta}]}{1-\alpha} ,
\end{equation}
and 
\begin{equation}
\label{eq:partial_theta}
\partial_\theta p_{\lambda}(u,\theta) = 
\lambda \Big(1-\frac{1}{1-\alpha}\mathbb{E}[\un_{\langle Z,u\rangle  \ge \theta}]\Big).
\end{equation}

\begin{remark}
We formulated the above convergence and rate estimation results for $\nabla p_\lambda$. However, these results would stay true as soon as the key function $p_\lambda$ is convex when we can access to a sub-gradient, which has been shown to be compatible with mirror descent.
\end{remark}

We assume that we observe a sequence of mutually independent random variables $(\hat{Z}^k)_{k \ge 0}$ that are also sampled independently from the previous positions of the algorithm.
The expressions  \eqref{eq:partial_u} and \eqref{eq:partial_theta} lead to a natural (possibly biased) stochastic approximation of
$\nabla p_{\lambda}$ with the help of the sequence $(\hat{Z}^k)_{k \ge 0}$. 
Assuming that the algorithm is at step $k$ at position $(U_k,\theta_k)$, we introduce the stochastic approximation of the sub-gradients: 
\begin{equation}\label{def:bias_sub_diff}
\begin{cases}
\hg_{k+1,1}& = - \hZ^{k+1} + \frac{\lambda}{1-\alpha} \hZ^{k+1} \un_{\langle \hZ^{k+1}, U_k\rangle \ge \theta_k}\vspace{1em}\\ 
\hg_{k+1,2} &= \lambda\big( 1- \frac{1}{1-\alpha}\un_{\langle \hZ^{k+1}, U_k \rangle \ge \theta_k} \big)
\end{cases}.
\end{equation}

We now describe the necessary assumptions on the sequence $(\hat{Z}^k)_{k \ge 0}$ to build a consistent SMD algorithm.\\

\begin{assumption}
\label{hyp:biais}
{\bf Biased simulations}\\
{Let $\mathcal{L}(\hat{Z}^{k+1})$ and $\mathcal{L}(Z)$ denote the distributions of $\hat{Z}^{k+1}$ and $Z$ respectively.} We assume that the sequence $(\hat{Z}^k)_{k \ge 0}$ satisfies both:
\begin{equation}\label{def:bias_simulation1}
\mathcal{W}_1(\mathcal{L}(\hat{Z}^{k+1}),\mathcal{L}(Z)) \leq \delta_{k+1},
\end{equation} 
and
\begin{equation}\label{def:bias_simulation2}
\forall u \in \Db_m, \quad \forall \theta \in \rset, \quad 
\Big\|\mathbb{E}\big[\langle Z, u\rangle \un_{\langle Z, u\rangle \ge \theta} - \langle\hat{Z}^{k+1}, u\rangle \un_{\langle\hat{Z}^{k+1},u\rangle \ge \theta} \, \vert \, \mathcal{F}_k \big] \Big\| \leq \upsilon_{k+1}.
\end{equation}
where $\mathcal{W}_1$ stands for the Wasserstein-1 distance.
 \end{assumption}
 
The sequences $(\delta_{k+1})_{k \ge 0}$ and $(\upsilon_{k+1})_{k \ge 0}$ translate the fact that we may observe some biased realizations of the assets $Z$ at step $k$, the perfect simulation framework being translated by $\delta_{k+1}=\upsilon_{k+1}=0$ for all integer $k$. {The Wasserstein distance used in $\delta_{k+1}$
 is indeed an easiest way to upper bound the bias naturally involved in our approximation procedure. Indeed, Equation \eqref{def:h_k} (see below) will involve the Kolmogorov distance instead, denoted by $d_{Kol}$ below, which quantifies the difference between cumulative distribution function. As the discretization of S.D.E. with (implicit) Euler scheme are well understood in terms of Wasserstein-1 distance, and because of the straightfoward inequality $d_{Kol} \leq 2 \sqrt{\mathcal{W}_1}$, we prefered to introduce Assumption \ref{hyp:biais} in terms of $\mathcal{W}_1$.}

We emphasize  that both $(\delta_{k+1})_{k \ge 0}$ and $(\upsilon_{k+1})_{k \ge 0}$ might heavily depend on $m$ the dimension of the vector $Z$. A specific example will be detailed below in our work.\\

The method we propose is then defined with Algorithm \ref{algo:SMD}.
\begin{algorithm}[h!]
  \KwData{Step-size sequence $(\eta_n)_{n\in\nset}$ and
$U_0 \in \rset$, $\var_0 \in \rset$; $\alpha \in (0,1)$}
  \KwResult{Two sequences: $X_k = (U_k,\var_k)_{k \ge 0}$}
    \For{$k=0, \ldots,$}{ 
Simulate the random $Z^{k+1}$ satisfying \eqref{def:bias_simulation1} and \eqref{def:bias_simulation2}\;  
    Compute a stochastic approximation $\hg_{k+1}$ of $\nabla  p_\lambda(U_k,\var_k)$  with:
  $$\begin{cases}  \hg_{k+1,1}& = - \hZ^{k+1} + \frac{\lambda}{1-\alpha} \hZ^{k+1} \un_{\langle\hZ^{k+1}, U_k\rangle \ge \theta_k}\vspace{1em}\\ 
\hg_{k+1,2} &= \lambda\big( 1- \frac{1}{1-\alpha}\un_{\langle\hZ^{k+1}, U_k\rangle \ge \theta_k} \big)
\end{cases}.$$
   Update the algorithm  $ X_{k+1}   = \arg \min_{x \in x \in \Db_m \times \R} \big\{ \langle \hg_{k+1}, x-X_k \rangle + \frac{1}{\eta_{k+1}} \DP(x,X_k)\big\}$ using:
  $$ 
 X_{k+1}= (U_{k+1},\theta_{k+1}),\qquad   \begin{cases}
  U^{k+1}& = \frac{U^k e^{-\eta_{k+1}\hg_{k+1,1}}}{\|U^k e^{-\eta_{k+1} \hg_{k+1,1}}\|_1}\\
    \theta^{k+1}& =\theta^{k}-\eta_{k+1} \hg_{k+1,2}
\end{cases}.
$$
  } 
    \caption{Biased SMD} \label{algo:SMD}
\end{algorithm}
The next result states the asymptotic almost sure convergence of the sequence $(X_k)_{k \ge 0}$ constructed in Algorithm \ref{algo:SMD} towards the target point $x^\star_{\lambda}=(u^\star_{\lambda},\va(u^\star_\lambda))$. Under a suitable assumption on  the biased sequences $(\delta_{k+1})_{k \ge 0}$ and $(\upsilon_{k+1})_{k \ge 0}$, we derive the next result.

\begin{theorem}[Almost sure convergence of the biased SMD]\label{theo:bias_SMD_independent}
Assume that   $$\sum_{k \ge 0} \eta_{k+1} = + \infty, \quad \text{and}\quad \sum_{k \ge 0} \eta_{k+1}^2 <  + \infty, $$
and that the bias sequences $(\delta_{k+1})_{k \ge 0}$ and $(\upsilon_{k+1})_{k \ge 0}$ introduced in Assumption \ref{hyp:biais} satisfy
$$ \sum_{k\ge0} \eta_{k+1} (\sqrt{\delta_{k+1}}+\upsilon_{k+1}) < + \infty,$$
 then:
 \begin{itemize}
 \item[$i)$] $$
 \sum_{k \ge 0} \eta_{k+1} (p_{\lambda}(X_k)-\min(p_{\lambda})) \leq 
\sum_{k \ge 0} \eta_{k+1} \langle \nabla p_{\lambda}(X_k),X_k-x^\star_{\lambda}\rangle < + \infty \quad a.s.
$$
\item[$ii)$] The Cesaro average $\bar{X}^{\eta}_k$ defined by
\begin{equation}
\label{eq:def_cesaro_average}
\bar{X}^{\eta}_k := \Big(\sum_{i=0}^k \eta_i\Big)^{-1}\Big(\sum_{i=0}^k \eta_i X_i\Big)
\end{equation}
is almost surely convergent and 
$$
p_{\lambda}(\bar{X}^{\eta}_k) \longrightarrow min(p_{\lambda}) \quad a.s.
$$
 \item[$iii)$]
 Assume that: $$
 \sum_{k\ge0} \sqrt{\delta_{k+1}} + \upsilon_{k+1}<+\infty,
 $$
 then the sequence $(X_k)_{k \ge 1}$ almost surely converges and verifies that:
 $$
 \lim_{k \longmapsto + \infty} p_{\lambda}(X_k) = \min(p_{\lambda})  \quad a.s.
 $$
 \end{itemize}
\end{theorem}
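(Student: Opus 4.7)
The strategy is to use the Bregman divergence $\DP(x^\star_\lambda, X_k)$ as a stochastic Lyapunov function and to apply the Robbins--Siegmund quasi-martingale theorem. The starting point is the classical three-point inequality for mirror descent: the first-order optimality of $X_{k+1}$ as the minimizer of $x\mapsto \langle \hg_{k+1}, x-X_k\rangle + \eta_{k+1}^{-1}\DP(x,X_k)$, combined with the strong convexity of $\Phi$ (cf.~\eqref{eq:rho_convex}), yields
\begin{equation*}
\DP(x^\star_\lambda, X_{k+1}) \leq \DP(x^\star_\lambda, X_k) - \eta_{k+1}\langle \hg_{k+1}, X_k-x^\star_\lambda\rangle + \frac{\eta_{k+1}^2}{2}\|\hg_{k+1}\|_*^2.
\end{equation*}
I would then decompose $\hg_{k+1}=\nabla p_\lambda(X_k)+b_{k+1}+\Delta M_{k+1}$, where $b_{k+1}=\PE[\hg_{k+1}\mid\F_k]-\nabla p_\lambda(X_k)$ is a deterministic bias and $\Delta M_{k+1}$ a square-integrable martingale increment. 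The core technical step is then to control $\|b_{k+1}\|$ via Assumption~\ref{hyp:biais}: the contribution of $-\PE[\hZ^{k+1}]+\PE[Z]$ is bounded by $\delta_{k+1}$ through Wasserstein duality, the indicator bias appearing in $\partial_\theta p_\lambda$ is bounded by $2\sqrt{\delta_{k+1}}$ via $d_{Kol}\leq 2\sqrt{\mathcal{W}_1}$, and the inner product coming from the $\hZ^{k+1}\un_{\langle\hZ^{k+1},U_k\rangle\geq\theta_k}$ term is bounded by $\upsilon_{k+1}$ using \eqref{def:bias_simulation2}.

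Using convexity $\langle \nabla p_\lambda(X_k), X_k-x^\star_\lambda\rangle\geq p_\lambda(X_k)-\min(p_\lambda)\geq 0$ and taking conditional expectation yields the recursion
\begin{equation*}
\PE[\DP(x^\star_\lambda, X_{k+1})\mid\F_k] \leq \DP(x^\star_\lambda, X_k) - \eta_{k+1}\langle\nabla p_\lambda(X_k), X_k-x^\star_\lambda\rangle + C\eta_{k+1}(\sqrt{\delta_{k+1}}+\upsilon_{k+1}) + C\eta_{k+1}^2,
\end{equation*}
where $C$ absorbs a uniform upper bound on $\|\hg_{k+1}\|_*$ and on $\|X_k-x^\star_\lambda\|$; the necessary a priori boundedness of $\theta_k$ is obtained by a trapping argument on the update $\theta_{k+1}=\theta_k-\eta_{k+1}\hg_{k+1,2}$ together with the bound $|\hg_{k+1,2}|\leq \lambda/(1-\alpha)$. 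Applying the Robbins--Siegmund theorem under $\sum\eta_{k+1}^2<\infty$ and $\sum\eta_{k+1}(\sqrt{\delta_{k+1}}+\upsilon_{k+1})<\infty$ yields simultaneously the a.s.\ convergence of $(\DP(x^\star_\lambda, X_k))_k$ and $\sum_k\eta_{k+1}\langle\nabla p_\lambda(X_k), X_k-x^\star_\lambda\rangle<\infty$ a.s., which combined with convexity proves $(i)$. For $(ii)$, Jensen's inequality applied to the convex $p_\lambda$ gives
\begin{equation*}
p_\lambda(\bar X^\eta_k)-\min(p_\lambda)\leq \Big(\sum_{i=0}^k\eta_i\Big)^{-1}\sum_{i=0}^k\eta_i\big(p_\lambda(X_i)-\min(p_\lambda)\big),
\end{equation*}
whose right-hand side vanishes a.s., since the numerator is a.s.\ finite by $(i)$ while $\sum\eta_i=+\infty$; the a.s.\ convergence of $\bar X^\eta_k$ itself follows from the compactness of $\Db_m\times\{\theta : |\theta|\leq R\}$ (with $R$ an a.s.\ upper bound on $|\theta_k|$), continuity of $p_\lambda$, and uniqueness of the minimizer.

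For $(iii)$, the strengthened summability $\sum(\sqrt{\delta_{k+1}}+\upsilon_{k+1})<\infty$ is leveraged to upgrade the Cesaro statement to a genuine sequential one. The natural route is to verify that $(X_k)$ is an asymptotic pseudo-trajectory in the sense of Benaim \cite{benaim1996asymptotic} (following the SMD treatment of \cite{zhou:etal:2017}) of the continuous-time mirror flow associated to $p_\lambda$: the stronger assumption on the bias precisely ensures that the cumulative discrete-continuous drift is summable, so that the limit set of $(X_k)$ is contained in that of the flow, which by convexity of $p_\lambda$ reduces to $\{x^\star_\lambda\}$. Combined with the a.s.\ convergence of $\DP(x^\star_\lambda, X_k)$ already established and the extraction of a subsequence $X_{k_n}\to x^\star_\lambda$ provided by $\liminf_k p_\lambda(X_k)=\min(p_\lambda)$ and compactness, the full limit of $\DP(x^\star_\lambda, X_k)$ must be $0$, hence $X_k\to x^\star_\lambda$ a.s.\ and $p_\lambda(X_k)\to\min(p_\lambda)$ by continuity.

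The principal obstacles are threefold: $(a)$ the non-Lipschitzness of $z\mapsto\un_{\langle z,u\rangle\geq\theta}$, which forces the lossy $\sqrt{\delta_{k+1}}$ rate through the Kolmogorov--Wasserstein inequality rather than a direct $\delta_{k+1}$ one; $(b)$ the a priori boundedness of $\theta_k$, needed to control both $\|\hg_{k+1}\|_*$ and $\|X_k-x^\star_\lambda\|$ in the bias decomposition; and $(c)$ for $(iii)$, ruling out multiple accumulation points of $(X_k)$, which is exactly what distinguishes genuine sequential convergence from the Cesaro convergence of $(ii)$ and motivates the use of the stronger bias summability combined with the asymptotic pseudo-trajectory machinery.
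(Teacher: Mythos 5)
Your proposal follows the same architecture as the paper's proof: the Bregman divergence $\DP(x^\star_\lambda,\cdot)$ as a stochastic Lyapunov function, the decomposition of $\hg_{k+1}$ into gradient, bias and martingale increment, the bias bound $2\sqrt{\delta_{k+1}}+\delta_{k+1}+\tfrac{\lambda}{1-\alpha}\upsilon_{k+1}$ obtained from $d_{Kol}\le 2\sqrt{\mathcal{W}_1}$ and \eqref{def:bias_simulation2}, Robbins--Siegmund for $(i)$, Jensen for $(ii)$, and the asymptotic-pseudo-trajectory/ODE method for $(iii)$. There are, however, two genuine gaps. The first concerns how you control the cross term $\eta_{k+1}\langle \mathfrak{b}_{k+1}, X_k-x^\star_\lambda\rangle$: you assert a uniform bound on $\|X_k-x^\star_\lambda\|$, resting on an unproved ``trapping argument'' for $\theta_k$, and a uniform bound on $\|\hg_{k+1}\|_*$. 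Neither is available: $\hZ^{k+1}$ is unbounded, so $\|\hg_{k+1,1}\|$ only admits a conditional second-moment bound, and since $\sum_k\eta_{k+1}=+\infty$ with persistent noise and bias, almost sure boundedness of $\theta_k$ does not follow from $|\hg_{k+1,2}|\le\lambda\max(1,\tfrac{\alpha}{1-\alpha})$ alone --- establishing it would require a supermartingale argument of essentially the same difficulty as the one you are trying to build. The paper circumvents this entirely by writing $\|X_k-x^\star_\lambda\|\le \tfrac12+2\,\DP(x^\star_\lambda,X_k)$ (a consequence of Young's inequality and \eqref{eq:rho_convex}), which turns the bias contribution into a multiplicative perturbation $\big(1+2\eta_{k+1}\|\mathfrak{b}_{k+1}\|\big)\DP(x^\star_\lambda,X_k)$ that the Robbins--Siegmund lemma absorbs under $\sum_k\eta_{k+1}\PE\|\mathfrak{b}_{k+1}\|<+\infty$; you need this (or an equivalent) step for the recursion to close.

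The second gap is your reliance on uniqueness of the minimizer in $(ii)$ and $(iii)$ (``the limit set \dots reduces to $\{x^\star_\lambda\}$''). The set $\arg\min p_\lambda$ need not be a singleton --- the paper itself observes that it can be a continuum. For $(ii)$, only almost sure boundedness of $(X_k)$ (which follows from the convergence of $\DP(x^\star_\lambda,X_k)$) is needed, together with the Jensen bound you already wrote. For $(iii)$, the argument must be split in two: first show that every adherence point of the mirror-descent flow lies in $\arg\min p_\lambda$ (via the Fenchel coupling $V_{x^\star}$ being non-increasing along the flow and strictly decreasing at a uniform rate outside any neighbourhood of the minimizing set), and then rerun the monotonicity of the Fenchel coupling with $x^\star$ replaced by a \emph{particular} adherence point $x_\infty$ to rule out multiple accumulation points. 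Your subsequence argument combined with the convergence of $\DP(x^\star_\lambda,X_k)$ only yields single-point convergence when the minimizer is unique, so this extra step is not optional.
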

{
The results obtained in $i)$ and $ii)$ are rather classical in the stochastic optimization community. The difficulty here is to derive a ``Lyapunov'' function and then to quantify  the effect of the bias induced by $\hat{Z}^{k+1}$ satisfying \eqref{def:bias_simulation1} and 
\eqref{def:bias_simulation2}. To do so, we will follow the arguments developed in \cite{Lan:2012} in order to obtain a recursion expression but we will need to use some ad-hoc supplementary steps to handle the bias and to use the Robbins Siegmund theorem. The last item is much more difficult to obtain, roughly speaking we use the O.D.E. method introduced in  \cite{benaim1996asymptotic}. Even though it is a standard tool in stochastic optimization, applying this method to our framework is challenging because of the mirror descent O.D.E. and it deserves some sophisticated additional developments that are far from being a straightforward application of  \cite{benaim1996asymptotic}. In particular, we will need to use and adapt some fine results of \cite{Mertikopoulos} to our framework. The proof is given in Appendix \ref{app:proofs_smd}.}
\medskip


Asymptotic results such as the one stated in Theorem \ref{theo:bias_SMD_independent} are commonly criticized because algorithms are always ran within a finite number of iterations. In particular, Theorem \ref{theo:bias_SMD_independent} does not provide any insight on the rate of convergence of the method, which may crucially influence the practical usefulness of the algorithm.  
Nevertheless, it shows that Algorithm \ref{algo:SMD} converges and minimizes the function $p_{\lambda}$ up to a Cesaro averaging procedure (see $i)$ and $ii)$ of Theorem \ref{theo:bias_SMD_independent}). It may be shown indeed that the algorithm converges without Cesaro averaging at the price of a more stringent condition on the terms involved in the bias of the algorithm quantified by $\delta_{k+1}$ and $\upsilon_{k+1}$.

To cope with the legitimate asymptotic  criticisms, we also state non asymptotic  theoretical guarantees for the value of the objective function. 
\begin{theorem}[Finite-time guarantees]
\label{theo:bias_SMD_finite}
Let us consider $(X_k)_{k \ge 0}$ defined in Algorithm \ref{algo:SMD} and its Cesaro averaging counterpart introduced in \eqref{eq:def_cesaro_average}, then for any $n>1$, 
$$\PE [p_\lambda(\bar{X}^\eta_n)]-p_\lambda(x^\star_\lambda)\le \big(\sum_{j=0}^{n-1}\eta_{j+1}\big)^{-1} \Big(\mathbf{D}_\Phi^0+ \sum_{k=0}^{n-1} \big(a_{k+1} \mathbf{D}_\Phi^k + b_{k+1}\big) \Big),
$$
where $\mathbf{D}_\Phi^k =\PE[\mathcal{D}_\Phi(x^\star_\lambda,X_k)]$ and
$$a_{k+1}=2 \eta_{k+1} \big(2 \sqrt{\delta_{k+1}} + \delta_{k+1} + \frac{\lambda \upsilon_{k+1}}{1-\alpha}\big), \quad b_{k+1}=  C\Big(\eta_{k+1}^2 + a_{k+1} \Big).$$
The constant $C>0$ appaering in $b_{k+1}$ is precised in \eqref{def:ab}.
\end{theorem}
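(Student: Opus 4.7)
The plan is to carry out a biased stochastic mirror descent analysis in the style of \cite{Lan:2012}, but taking care of how Assumption \ref{hyp:biais} propagates through the gradient approximation. I would start from the one-step inequality: by the definition of $X_{k+1}$ as the Bregman-proximal minimizer of the linearized objective and the three-point identity for $\mathcal{D}_\Phi$, we get for any $x\in\Db_m\times\rset$,
\begin{equation*}
\eta_{k+1}\langle \hg_{k+1}, X_k-x\rangle \le \mathcal{D}_\Phi(x,X_k) - \mathcal{D}_\Phi(x,X_{k+1}) + \frac{\eta_{k+1}^2}{2}\|\hg_{k+1}\|_*^2.
\end{equation*}
Specialising to $x=x^\star_\lambda$ and using convexity of $p_\lambda$ gives $\eta_{k+1}(p_\lambda(X_k)-p_\lambda(x^\star_\lambda))\le \eta_{k+1}\langle\nabla p_\lambda(X_k), X_k-x^\star_\lambda\rangle$, which I rewrite as $\eta_{k+1}\langle\hg_{k+1},X_k-x^\star_\lambda\rangle$ plus the correction term $\eta_{k+1}\langle \nabla p_\lambda(X_k)-\hg_{k+1}, X_k-x^\star_\lambda\rangle$. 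A uniform bound on $\|\hg_{k+1}\|_*^2$ (using boundedness of the indicator contributions and an $L^2$ moment of $\hZ$) controls the $\eta_{k+1}^2$ term by an absolute constant that ultimately becomes the $C\eta_{k+1}^2$ appearing in $b_{k+1}$.

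Next I decompose $\hg_{k+1}=\PE[\hg_{k+1}\mid\F_k]+\xi_{k+1}$, where $\xi_{k+1}$ is a martingale increment. Taking conditional expectations, $\PE[\langle \xi_{k+1}, X_k-x^\star_\lambda\rangle\mid\F_k]=0$ because $X_k-x^\star_\lambda$ is $\F_k$-measurable. It remains to control the deterministic bias $\PE[\hg_{k+1}\mid\F_k]-\nabla p_\lambda(X_k)$. Using \eqref{eq:partial_u}--\eqref{eq:partial_theta} and the definitions in \eqref{def:bias_sub_diff}, the $u$-component bias splits into $\PE[Z-\hZ^{k+1}\mid\F_k]$, controlled by $\delta_{k+1}$ via \eqref{def:bias_simulation1}, plus a $\frac{\lambda}{1-\alpha}$ multiple of a quantity that is exactly $\upsilon_{k+1}$-controlled by \eqref{def:bias_simulation2}. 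The $\theta$-component bias reduces to $\frac{\lambda}{1-\alpha}|\PP(\langle Z,U_k\rangle\ge\theta_k)-\PP(\langle \hZ^{k+1},U_k\rangle\ge\theta_k\mid\F_k)|$, which is a Kolmogorov-distance quantity and hence bounded by $\frac{2\lambda}{1-\alpha}\sqrt{\delta_{k+1}}$ thanks to the standard $d_{Kol}\le 2\sqrt{\mathcal{W}_1}$ inequality recalled after Assumption \ref{hyp:biais}. Altogether I obtain
\begin{equation*}
\big\|\PE[\hg_{k+1}\mid\F_k]-\nabla p_\lambda(X_k)\big\| \le \delta_{k+1}+\frac{\lambda}{1-\alpha}\bigl(2\sqrt{\delta_{k+1}}+\upsilon_{k+1}\bigr).
\end{equation*}

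To convert this bias, which multiplies $\|X_k-x^\star_\lambda\|$ via Cauchy--Schwarz, into a term proportional to $\mathbf{D}_\Phi^k$, I use the elementary inequality $t\le 1+t^2$ for $t\ge 0$ applied to $t=\|X_k-x^\star_\lambda\|$ together with the strong-convexity bound \eqref{eq:rho_convex}, which gives $\|X_k-x^\star_\lambda\|\le 1+2\mathcal{D}_\Phi(x^\star_\lambda,X_k)$. This exactly produces the coefficient structure $a_{k+1}\mathcal{D}_\Phi(x^\star_\lambda,X_k)+a_{k+1}$ with $a_{k+1}=2\eta_{k+1}\bigl(2\sqrt{\delta_{k+1}}+\delta_{k+1}+\frac{\lambda\upsilon_{k+1}}{1-\alpha}\bigr)$, where the constant bias part is absorbed into $b_{k+1}$ alongside the $C\eta_{k+1}^2$ coming from $\|\hg_{k+1}\|_*^2$.

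Finally I sum the resulting one-step inequalities for $k=0,\dots,n-1$. The Bregman terms telescope, leaving $\mathcal{D}_\Phi(x^\star_\lambda,X_0)-\mathcal{D}_\Phi(x^\star_\lambda,X_n)\le \mathbf{D}_\Phi^0$ after taking expectations; the sum of the $\eta_{k+1}(p_\lambda(X_k)-p_\lambda(x^\star_\lambda))$ terms is lower bounded, by convexity of $p_\lambda$ and the definition \eqref{eq:def_cesaro_average}, by $\bigl(\sum_{j=0}^{n-1}\eta_{j+1}\bigr)(p_\lambda(\bar X^\eta_n)-p_\lambda(x^\star_\lambda))$ via Jensen's inequality. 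Dividing through by $\sum_j\eta_{j+1}$ yields the stated bound. The main delicate step is the bias decomposition in the second paragraph: one needs to be careful that the indicator-type term is genuinely a Kolmogorov-distance quantity so that the $\sqrt{\delta_{k+1}}$ rate (not $\delta_{k+1}$) is the right one, and that the remaining non-Lipschitz product $\langle z,u\rangle\un_{\langle z,u\rangle\ge\theta}$ is handled by the dedicated bias budget $\upsilon_{k+1}$ rather than by $\mathcal{W}_1$ alone.
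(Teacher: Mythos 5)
Your proposal is correct and follows essentially the same route as the paper: the Bregman three-point/proximal inequality with a Young-type bound on the $\eta_{k+1}^2\|\hg_{k+1}\|^2$ term, the decomposition of $\hg_{k+1}$ into a martingale increment plus a bias controlled by $2\sqrt{\delta_{k+1}}+\delta_{k+1}+\frac{\lambda}{1-\alpha}\upsilon_{k+1}$ via the $d_{Kol}\le 2\sqrt{\mathcal{W}_1}$ inequality, the conversion $\|X_k-x^\star_\lambda\|\le 1+2\mathcal{D}_\Phi(x^\star_\lambda,X_k)$, and finally telescoping plus Jensen on the Cesaro average. The only deviations are immaterial constants (e.g.\ where the factor $\frac{\lambda}{1-\alpha}$ sits on the Kolmogorov term), which only affect the unspecified constant in $b_{k+1}$.
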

Finally, we aim at applying these guarantees in the case of a finite horizon of computation $n$ and in this view, $\mathbf{D}_\Phi^k $ can be controlled using a recursion inequality \eqref{eq:borne_Dphi_k}.
We consider the specific case of the SMD with a constant step-size sequence stopped at iteration $n$:
$$\eta_{k+1}=\eta >0, \quad \forall\, 0\le k \le n.$$

We will also assume a constant upper bound of the bias in the simulation of the random variables $\hZ^k$: we denote by $\omega$ its resulting impact in the SMD. This is legitimate since we can reduce this bias with the use of an arbitrarily small discretization step-size, which of course harms the computational cost.
More precisely we consider fixed values of $\delta_{k+1}$ and $\upsilon_{k+1}$ such that:
$$2 \sqrt{\delta_{k+1}} + \delta_{k+1} + \frac{\lambda \upsilon_{k+1}}{1-\alpha}=\omega>0,\qquad \forall 1\le k\le n.$$
\begin{corollary}
\label{cor:finite_horizon}
For a given $n \in \mathbb{N}$, if $(\eta,\omega)$ are chosen such that $$\eta = \frac{\Delta_{\Phi}^0}{2\sqrt{n+1}} \quad \text{and} \quad \omega = \frac{1}{\sqrt{n+1} \Delta_{\Phi}^0}$$ with $\{\Delta_\Phi^0\}^2=\frac{(\theta_0-\va(u^\star_\lambda))^2}{2} + \log m $,
then there exists $C>0$ large enough such that:
 $$
\PE [p_\lambda (\hat{X}^\eta_n) ]- p_\lambda(x^\star_\lambda) 
\le C \frac{|\theta_0- \va(u^\star_\lambda)| + \sqrt{\log m}}{\sqrt{n+1}}. 
$$
\end{corollary}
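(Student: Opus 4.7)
The plan is to start from the non-asymptotic bound of Theorem \ref{theo:bias_SMD_finite} and carry out the specific computation for the chosen $(\eta,\omega)$. With constant step-size $\eta_{k+1}=\eta$ and constant cumulated bias $\omega$, the auxiliary quantities collapse to
$$a_{k+1}=2\eta\omega=:a, \qquad b_{k+1}=C(\eta^2+2\eta\omega)=:b,$$
and the theorem then reads
$$\PE [p_\lambda(\bar X^\eta_n)]-p_\lambda(x^\star_\lambda)\le \frac{1}{n\eta}\Big(\mathbf{D}_\Phi^0+a\sum_{k=0}^{n-1}\mathbf{D}_\Phi^k+nb\Big).$$
The main work is therefore to obtain a uniform upper bound on $\mathbf{D}_\Phi^k$ for $0\le k\le n$.

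For that, I would use the one-step recursion \eqref{eq:borne_Dphi_k} referred to just before the statement, which is of the generic form $\mathbf{D}_\Phi^{k+1}\le(1+a_{k+1})\mathbf{D}_\Phi^k+b_{k+1}$. With constant $a,b$, iterating gives
$$\mathbf{D}_\Phi^k\le (1+a)^k \mathbf{D}_\Phi^0+\frac{b}{a}\bigl((1+a)^k-1\bigr)\le e^{ak}\,\mathbf{D}_\Phi^0+\frac{b}{a}(e^{ak}-1).$$
The key observation, which is also the point of the specific scaling chosen in the corollary, is that
$$an=2n\eta\omega=\frac{n}{n+1}\le 1,$$
so that $(1+a)^k\le e$ for every $k\le n$. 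Combined with $b/a=C(1+\eta^2/a)=C\bigl(1+(\Delta_\Phi^0)^2/4\bigr)$ (since $\eta^2/a=(\Delta_\Phi^0)^2/(4(n+1))\cdot (n+1)/1$), this yields a bound of the form $\mathbf{D}_\Phi^k\le K\,(1+(\Delta_\Phi^0)^2)$ for some universal constant $K$, uniformly in $k\le n$.

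It then remains to inject this uniform control into the main inequality. One gets
$$\mathbf{D}_\Phi^0+a\sum_{k=0}^{n-1}\mathbf{D}_\Phi^k+nb\le \mathbf{D}_\Phi^0+na\cdot K(1+(\Delta_\Phi^0)^2)+nC(\eta^2+a),$$
and since $na\le 1$, $n\eta^2=(\Delta_\Phi^0)^2\cdot n/(4(n+1))\le (\Delta_\Phi^0)^2/4$, the right-hand side is $O\bigl(1+(\Delta_\Phi^0)^2\bigr)$. Dividing by $n\eta=\tfrac{n}{2\sqrt{n+1}}\Delta_\Phi^0\ge \tfrac{1}{4}\sqrt{n+1}\,\Delta_\Phi^0$ yields
$$\PE[p_\lambda(\bar X^\eta_n)]-p_\lambda(x^\star_\lambda)\le C'\,\frac{1+(\Delta_\Phi^0)^2}{\sqrt{n+1}\,\Delta_\Phi^0}\le C''\,\frac{\Delta_\Phi^0}{\sqrt{n+1}}$$
(after absorbing the $1/\Delta_\Phi^0$ term into the constant under the mild assumption $\Delta_\Phi^0\ge 1$, or otherwise handling it by a trivial case split). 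Finally, using $\{\Delta_\Phi^0\}^2=(\theta_0-\va(u^\star_\lambda))^2/2+\log m$ and $\sqrt{x+y}\le\sqrt{x}+\sqrt{y}$, we obtain $\Delta_\Phi^0\le |\theta_0-\va(u^\star_\lambda)|/\sqrt{2}+\sqrt{\log m}$, which is exactly the claimed bound.

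The only delicate point is really the uniform control of $\mathbf{D}_\Phi^k$: without the exact balance $2n\eta\omega\le 1$ chosen here, the geometric factor $(1+a)^n$ would blow up and destroy the $n^{-1/2}$ rate. Everything else reduces to plugging the explicit values of $\eta$ and $\omega$ into Theorem \ref{theo:bias_SMD_finite}.
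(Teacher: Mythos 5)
Your argument is correct and follows essentially the same route as the paper: plug the constant choices of $(\eta,\omega)$ into Theorem \ref{theo:bias_SMD_finite}, control $\mathbf{D}_\Phi^k$ through the recursion \eqref{eq:borne_Dphi_k}, and exploit the balance $2\eta\omega(n+1)\le 1$ to keep the geometric factor bounded by $e$ (the paper sums the geometric series explicitly to reach \eqref{eq:up_general}, whereas you use a uniform bound on $\mathbf{D}_\Phi^k$, but this is the same computation). Your explicit remark on absorbing the $1/\Delta_\Phi^0$ term (using $\{\Delta_\Phi^0\}^2\ge\log m$) is in fact slightly more careful than the paper's treatment of the analogous $\omega$-term.
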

{This previous corollary is built using the optimal tuning of the parameters $\eta$ and $\omega$ derived from our proof that is detailed in Appendix \ref{app:smd_finite}. Therefore, these values may be seen as purely theoretical as we do not exactly know the value of $\Delta_{\Phi}^0$. Nevertheless, any upper bound of $\Delta_{\Phi}^0$ may be used to derive a strategy and an upper bound of the excess risk (see Equation \eqref{eq:up_general} in Appendix \eqref{app:smd_finite}).
In the meantime, the choice: $$\eta=\frac{1}{2 \sqrt{n+1}} \quad \text{and} \quad \omega=\frac{1}{\sqrt{n+1}},$$ yields:
 $$
\PE [p_\lambda (\hat{X}^\eta_n) ]- p_\lambda(x^\star_\lambda) 
\le C \frac{|\theta_0- \va(u^\star_\lambda)|^2 + \log m}{\sqrt{n+1}},
$$
which is slightly worse  than the upper bound stated in Corollary \eqref{cor:finite_horizon}.
}

\section{Portfolio hedging under $\cva$ constraint} 
\label{sec:portfolio}

We aim to apply our optimization strategy to the specific situation of portfolio hedging.  As a consequence, we precise the structure of $Z$, a portfolio of $m$ financial assets, and detail the strategy of biased simulation that can be used. We will illustrate this setting using numerical simulations in Section \ref{sec:num}. We are exactly in the field of application described in Section \ref{sec:cvar_SMD}.

\subsection{Description of the portfolio dynamics}\label{portfolio:dynamics}
In what follows, we consider the situation where $Z$ contains $m = m'+1$ assets: a return $Y$ obtained as the baseline short-term interest rate Cox-Ingersoll-Ross process or CIR, $(r_t)_{t \ge 0}$ with no drift, and a family $\Sb=(S^1,\ldots S^{m'})$ of $m'=m-1$ geometric Brownian motions that encode  some risky assets in the portfolio.

Recall that the CIR has been introduced in \cite{CIR} as a diffusion process and that it is commonly used for the description of the dynamics of interest rates. The  CIR short rate model induces a trajectory $t \longmapsto r_t$, whose stochastic differential equation depends on a triple $(a,b,\sigma_0)$ and is given by:
\begin{equation}\label{eq:def_CIR}
d r_t = a(b-r_t) dt + \sigma_0 \sqrt{r}_t dB_0(t),
\end{equation}
where $(B_0(t))_{t \ge 0}$ stands for a standard real Brownian motion.
The parameter $b$ stands for the long-time mean of the short rate while $a$ quantifies the strength of the mean-reversion effect. The volatility $\sigma_0$ is  multiplied by $\sqrt{r_t}$, and the condition $2 a b > \sigma_0^2$ guarantees that the interest rate remains positive with probability $1$. We refer to \cite{Glasserman} for further details. This almost sure positivity motivated our decision to use the CIR process instead of the Vasicek one. Moreover, the CIR model also incorporates both the mean reversion and the conditional heteroscedasticity since the volatility of the short rate process is increased when the short rate increases. 

The assets $Z_t  = (Y_t,S^1_t,\ldots,S^{m'}_t)$ are then described by the following system of stochastic differential equations:
\begin{equation}
\forall t \ge 0 \qquad 
\begin{cases}
dY_t & = r_t Y_t dt, \\
dS^i_t & = \mu_i S^i_t dt + \sigma_i S^i_t dB_i(t), \quad \forall i \in \{1,\ldots,m'\},
\end{cases}\label{eq:portfolio}
\end{equation}
where $\mathbf{B}=(B_0,B_1,\ldots,B_{m'})$ refers to a multivariate Brownian motion with correlated components. For the sake of simplicity, these components are assumed to satisfy:
$$
\mathbb{E}[B_{i}(t) B_j(t)]= \rho_{i,j} t,
$$
but more general correlation structures could be handled in our framework with further efforts.
The correlation matrix is the symmetric positive matrix, denoted by $\Sigma = (\rho_{i,j})_{1 \leq (i,j) \leq m}$.

The first process $(Y_t)_{t \le 1}$ corresponds to a neutral risk process whereas each geometric Brownian motion $(S^i_t)_{t \leq 1}$ corresponds to a specific risky asset parametrized by \textit{known} parameters $\mu_i$ and $\sigma_i$. We refer to  Pitman and Yor \cite{pitman:yor:1982} and to Gulisashvili and Stein \cite{gulisashvili:stein:2010} for several details on these classical processes used (among others) for portfolio modeling.   
Below we also assume that all the parameters that describe the CIR evolution and the portfolio dynamics (see Equations \eqref{eq:def_CIR} and \eqref{eq:portfolio}) \textit{ are known} and we are simply interested in the optimal hedging strategy, \textit{i.e.} we are looking for the optimal $\usl$ associated to the optimal mean return penalized by $\lambda \cva$ defined in Equation \eqref{def:P22} at time $T=1$. As indicated in Section \ref{sec:cvar_SMD}, we will use a Stochastic Mirror Descent strategy and we then need to sample some realizations of $Z$ at time $T=1$. Before detailing our sampling strategy, we summarize our assumptions on the parameters.\\

\begin{assumption}
\label{hyp:param}
{\bf Assumptions on the portfolio parameters}\\
We assume that:
\begin{enumerate}
\item[i)]the CIR parameters satisfy $ab>\sigma_0^2$ and $a>2\sqrt{2}\sigma_0$.
\item[ii)]the correlation matrix of the Brownian motions $\Sigma$ in invertible.
\end{enumerate} 
\end{assumption}
These assumptions on the coefficients defining the CIR ensure a control of the $L^2$ moment of the weak error rate as well as exponential integrability of the integral of the CIR for all time $t$ (see Appendix~\ref{app:CIR}).

\subsection{Simulation of the portfolio}
\label{subsec:discretization}
We are interested in an efficient simulation method that satisfies Assumption \ref{hyp:biais}: both a good approximation of the law of $Z_1$ \eqref{def:bias_simulation1} and a bias upper bound of the form \eqref{def:bias_simulation2}.
 
For this purpose, we emphasize that each G.B.M. used in Equation \eqref{eq:portfolio} may be simulated exactly since an exact representation of $S^i_t$ is available:
\begin{equation*}
\forall t \ge 0 \qquad 
S^i_t= S^i_0 \exp\big(\mu_i t + \sigma_i B_i(t)\big).
\end{equation*}
In the meantime, we can also observe that some exact simulation method exists for the CIR process sampled at a given fixed time. Nevertheless, the CIR process induces some numerical difficulties. 

\begin{itemize}
\item 
First, the SDE \eqref{eq:def_CIR} is not explicitly solvable at any time $t \in [0,1]$ so that the integral of the CIR between $0$ and $1$ needs to be approximated.
\item 
 Second the correlations between the different Brownian components of the portfolio are hardly compatible with the existing exact simulation methods of the CIR:  the presence of correlated noise, which is commonly accepted in financial modeling, is hardly tractable with the use of an exact simulation of the CIR (with the help of $\chi^2$ square distributions, see, \textit{e.g.} \cite{alfonsi:book}).
\item 
Third, one viable strategy to obtain an approximation could be the use of a discretization scheme as the implicit Euler or Milstein schemes with a fixed step-size $h>0$ for example, thanks to a recursion of the form:
$$
(r_{(k+1)h},Y_{(k+1)h}) = F(r_{k h},Y_{kh},h,\xi_{k+1}),
$$
where $F$ corresponds to an iterative update that uses the position $(r_{kh},Y_{kh})$ and $\xi_{k+1}$ is a two dimensional Gaussian innovation with a specific covariance. Nevertheless, such an attractive point of view induces some theoretical difficulties because of the form of the function $Z\un_{\langle Z,u\rangle \ge\theta}$ we need to approximate. This function is non-smooth and unbounded, which generates some technical difficulties when trying to use or adapt classical approaches on weak discretization errors.
\end{itemize}
We are  led to develop our own \textit{ad-hoc} simulation of the portfolio and prove its associated properties on the approximation.
 
 \paragraph{Correlated Brownian motion}
For this purpose, we introduce some independent standard Brownian paths $W(t)=(W_0(t),W_1(t),\ldots,W_{m'}(t))$
 and use the Cholesky decomposition to encode the correlations and recover correlated Brownian motions $B(t)=(B_0(t),B_1(t),\ldots,B_{m'}(t))$.

More precisely, the Cholesky decomposition used on $\Sigma$ yields a matrix $L$ such that: 
\begin{equation}\label{eq:chol} 
LL^T=\Sigma.
\end{equation}
We now define $B(t)=LW(t)$.

There are two important consequences of constructing $(B_i(t))_{t \ge 0}$ for $i=0, \dots, m'$ in this way.
The first one is that the Cholesky method yields a  lower triangular matrix $L$ whose first line is $1,0,\dots, 0$:
$$
L= \begin{pmatrix}
1 &0  & \dots  & 0 \\
\ell_{01} & \ell_{11} & \ddots & \vdots\\
\vdots &  & \ddots & 0 \\
\ell_{0m'} & \dots & \dots & \ell_{m'm'}
\end{pmatrix} =  \begin{pmatrix}
1 &0  & \dots  & 0 \\
\ell_{01} &  &   &  \\
\vdots &  & \tilde{L} &  \\
\ell_{0m'} &  &  & 
\end{pmatrix}.
$$
In particular, we see that we can take $B_0(t) = W_0(t)$
and $\tilde{W}(t)=(W_1(t), \dots,W_{m'}(t))$ is  independent of $W_0(t)$. The $m-1$ components may be simply written as:
$$
\begin{cases}
B_1(t)&= \ell_{01}W_0(t) + \ell_{11}W_1(t)\\
B_2(t)&= \ell_{02}W_0(t) + \ell_{12}W_1(t) +\ell_{22}W_2(t)\\
&\vdots\\
B_{m'}(t)&= \ell_{0m'}W_0(t) + \ell_{1m'}W_1(t) + \cdots+\ell_{m'm'}W_{m'}(t).
\end{cases}
$$

The second important and standard consequence is that these stochastic processes  $(B_i(t))_{t \ge 0}$ for $i=0, \dots, m'$ are indeed Brownian motions with the desired covariance matrix. 

\paragraph{Geometric Brownian Motion simulations}

From now, we consider $t=1$  and we alleviate the notations by noting $S^i_1=S_i$, $B_i(1)=B_i$ and $W_i(1)=W_i$. The asset $i$ at time $1$ for all $i \in \{1, \dots m'\}$ is a Geometric Brownian motion driven by $(B_i(t))_{t \ge 0}$, and we have:
\begin{align}
S_i &= S^i_0 \exp \Big( \big(\mu_i - \frac{(\sigma_i)^2}{2}\big) + \sigma_i B_i\Big)\nonumber\\
&=S^i_0 \exp \Big( \big(\mu_i - \frac{(\sigma_i)^2}{2}\big)+ \sigma_i \big(\ell_{0i}W_0 + \ell_{1i}W_1 + \cdots+\ell_{ii}W_i \big)\Big)\nonumber\\
&=\underbrace{S^i_0 \exp \Big(\big(\mu_i - \frac{(\sigma_i)^2}{2}\big)+ \sigma_i (\tilde{L} \tilde{W})_{i}\Big)}_{\mbox{independent of $W_0$}} \times e^{ \sigma_0 \ell_{0i} W_0},\label{eq:simulation_GBM}
\end{align}
since  $\ell_{1i}W_1 + \cdots+\ell_{ii}W_i  = (\tilde{L} \tilde{W})_{i}$ when $\tilde{W}=(W_1,\ldots,W_{m'})$.
 
\paragraph{Interest rate simulation}

Finally we focus on a discretization method for the CIR process.
We mention that discretizing the CIR process leads to some theoretical issues, as the coefficients in the SDE are not uniformly elliptic and bounded, as assumed in the seminal works of Bally and Talay \cite{bally:talay:1996}. Besides, a classical explicit Euler scheme generates positivity issues (because of the square root). However, many authors, notably Alfonsi \cite{alfonsi:2005,alfonsi:2010} proposed implicit Euler schemes and provided weak and strong error rates in the previously mentioned works. 
We choose to use an drift-implicit Euler scheme  which was introduced by \cite{alfonsi:2005} and studied in numerous articles \cite{Dereich_neuenkirch2012,alfonsi:2013} . 
The drift-implicit Euler scheme on a discrete time grid $(kh)_{0 \leq k \leq N}$ can be written by considering the SDE satisfied by $y_t=\sqrt{r_t}$ which leads to
$$
\hat{y}_{(k+1)h} =  \hat{y}_{k h} + \Big(  \frac{4ab-\sigma_0^2}{8\hat{y}_{(k+1)h}}-\frac{a}{2}\hat{y}_{(k+1)h}  \Big) h +\frac{ \sigma_0 }{2}\Delta B_0^{(k)},
$$
where $\Delta B_0^{(k)} = B_0((k+1)h) - B_0( kh)$.
This implicit scheme can be solved explicitely on $\rset_+$ from iteration $k$ to iteration $k+1$, which ensures the positivity of the scheme $(\hat{r}_{k h})_{0 \leq k \leq N}$. In particular, the update from $kh$ to $(k+1)h$ is given by: 
\begin{equation}\label{eq:discretisation_CIR}
\hat{r}_{(k+1)h}=
 \bigg(  \frac{\sqrt{\hat{r}_{kh}}+ \frac{\sigma_0}{2} \Delta B_0^{(k)}}{2(1+\frac{ah}{2})} 
 +\sqrt{  \frac{ \big(\sqrt{\hat{r}_{kh}}+ \frac{\sigma_0 }{2}\Delta B_0^{(k)}\big)^2}{4(1+\frac{ah}{2})^2}+ \frac{(4ab-\sigma_0^2)h}{8(1+\frac{ah}{2})} }  \bigg)^2.
\end{equation} 
Convergence results for this discretization scheme will be recalled in Appendix \ref{app:CIR}.

We  then use the integral representation  $Y_t = Y_0 \exp \big( \int_{0}^t r_s ds \big)$
and propose to approximate $Y_1$ with a Riemann integral approximation  between $0$ and $1$:
$$
\hat{I}_h:= \frac{1}{N} \sum_{k=1}^{N} \hat{r}_{k h}.
$$
This leads to the definition of our approximation:
\begin{equation}\label{eq:def_S1_Riemann}
\hat{Y}_1^{(h)} := Y_0 \exp(\hat{I}_h) =Y_0\exp\big(\frac{1}{N} \sum_{k=1}^{N} \hat{r}_{k h}\big).
\end{equation}

\paragraph{Overall simulation algorithm}
Combining all these steps, we are led to define the following algorithm to compute a biased simulation of $Z$.
\begin{algorithm}[h!]
  \KwData{Parameters of the CIR:  $a,b,\sigma_0$ and of the G.B.M. $(\mu_i,\sigma_i)$; Correlation matrix $\Sigma$ }
  \KwResult{A sample $\hat{Z}_1 = (\hat{Y}_1^{(h)}, S^1_1, \ldots, S^{m'}_1)$}
Compute the Cholesky factorization of $\Sigma$ (see Equation \eqref{eq:chol}): $L$ and $\tilde{L}$ .\\
Choose $\hat{r}_0$.\\
\For{$k=1, \ldots,N$}{ Compute the recursive approximation $\hat{r}_{k h}$ using  Equation \eqref{eq:discretisation_CIR}. } 
Set $\hat{Y}^{(h)}_1$ as the Riemann approximation given in   Equation \eqref{eq:def_S1_Riemann}\;
Simulate the G.B.M. $S^1_1, \ldots, S^{m'}_1$ with Equation \eqref{eq:simulation_GBM}.\\
    \caption{Approximation of the portfolio $\hat{Z}_1$} \label{algo:Simu_portefeuille}
\end{algorithm}

In order to propose a convergent algorithm derived from Theorem \ref{theo:bias_SMD_independent}, we need to derive an upper bound of the bias induced by Algorithm \ref{algo:Simu_portefeuille} involving the CIR approximation and the Riemann approximation of its integral computed in Equation \eqref{eq:def_S1_Riemann}.
The accuracy of our simulations depends on the number of discretization points $N=h^{-1}$ sampled between $0$ and $1$ in Algorithm \ref{algo:Simu_portefeuille} used to build the Riemann approximation \eqref{eq:def_S1_Riemann}. It is necessary to assess the accuracy of our method in terms of the value of $h$ to make   \eqref{def:bias_simulation1} and \eqref{def:bias_simulation2} explicit.
The simulation given in Algorithm \ref{algo:Simu_portefeuille} satisfies the next statements.

\begin{proposition}
\label{prop:wasserstein}
Assume that the portfolio parameters satisfy Assumption \ref{hyp:param} then using the above discretization scheme presented in Algorithm \ref{algo:Simu_portefeuille}, a constant $C$ exists (dependent on the CIR parameters) such that:
\begin{equation*}
\mathcal{W}_1(\mathcal{L}(\hat{Z}_{1}),\mathcal{L}(Z_1)) =\mathcal{W}_1(\mathcal{L}(\hat{Y}^{(h)}_{1}),\mathcal{L}(Y_1)) \leq C\sqrt{h}.
\end{equation*}
\end{proposition}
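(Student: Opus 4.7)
\medskip
\noindent\textbf{Proof plan for Proposition \ref{prop:wasserstein}.} The plan is to exhibit a coupling between $Z_1$ and $\hat{Z}_1$ under which the two random vectors differ only through the CIR component, and then control that single-coordinate difference by a Cauchy--Schwarz splitting that separates the strong error of the drift-implicit Euler scheme from the Riemann-sum error of the integrated rate. First, in Algorithm~\ref{algo:Simu_portefeuille} the geometric Brownian motions $S^i_1$ are reproduced by the closed form \eqref{eq:simulation_GBM} driven by the same Cholesky-correlated Brownian components $(B_0,\ldots,B_{m'})$ that appear in the portfolio dynamics \eqref{eq:portfolio}. Coupling both systems through these exact same Brownian paths yields $S^i_1=\hat S^i_1$ almost surely for $i=1,\ldots,m'$, so that
\[
\mathcal{W}_1(\mathcal{L}(\hat Z_1),\mathcal{L}(Z_1))\;\leq\;\PE\bigl[\|\hat Z_1-Z_1\|_1\bigr]\;=\;\PE\bigl[|\hat Y^{(h)}_1-Y_1|\bigr],
\]
and the right-hand side in turn upper bounds $\mathcal{W}_1(\mathcal{L}(\hat Y^{(h)}_1),\mathcal{L}(Y_1))$, which delivers the equality in the statement once the reverse inequality is obtained by projecting any optimal coupling on the first coordinate.

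Next, I would use the synchronous coupling on $B_0$ between $(r_t)_{t\in[0,1]}$ and $(\hat r_{kh})_{0\le k\le N}$ and write $Y_1=Y_0 e^{I}$ and $\hat Y^{(h)}_1=Y_0 e^{\hat I_h}$ with $I=\int_0^1 r_s\,\mathrm{d}s$. From $|e^a-e^b|\le (e^a+e^b)|a-b|$ and Cauchy--Schwarz,
\[
\PE\bigl[|\hat Y^{(h)}_1-Y_1|\bigr]\;\leq\;Y_0\,\sqrt{\PE\bigl[(e^{I}+e^{\hat I_h})^2\bigr]}\;\sqrt{\PE\bigl[(\hat I_h-I)^2\bigr]}.
\]
The first factor is bounded by a constant independent of $h$ thanks to the exponential integrability of the CIR integral and of its drift-implicit discretization, a property ensured by the condition $a>2\sqrt{2}\sigma_0$ of Assumption~\ref{hyp:param} and recalled in Appendix~\ref{app:CIR}. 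The second factor is then controlled by the triangle-inequality split
\[
\hat I_h-I\;=\;\frac{1}{N}\sum_{k=1}^{N}(\hat r_{kh}-r_{kh})\;+\;\Bigl(\frac{1}{N}\sum_{k=1}^{N}r_{kh}-\int_0^1 r_s\,\mathrm{d}s\Bigr),
\]
where the first contribution is handled by the strong $L^2$-error of the drift-implicit Euler scheme of Alfonsi \cite{alfonsi:2005,alfonsi:2013,Dereich_neuenkirch2012}, which is of order $\sqrt{h}$ under the assumed parameter regime, while the second contribution is a Riemann-sum error that is $O(\sqrt{h})$ in $L^2$ because $t\mapsto r_t$ is $1/2$-Hölder in $L^2$ (an SDE with bounded $L^2$ moments under Assumption~\ref{hyp:param}). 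Combining both, $\PE[(\hat I_h-I)^2]^{1/2}\le C\sqrt{h}$.

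The delicate step, and the place where I expect the main obstacle, is the uniform-in-$h$ control of $\PE[e^{2\hat I_h}]$ for the \emph{discretized} trajectory: one must verify that the drift-implicit scheme inherits the exponential moments of the continuous CIR, which is precisely the role of the reinforced condition $a>2\sqrt{2}\sigma_0$. Once this ingredient is secured via the appendix, assembling the two $O(\sqrt{h})$ bounds with the bounded exponential-moments factor yields $\PE[|\hat Y^{(h)}_1-Y_1|]\le C\sqrt{h}$ with a constant depending only on $(a,b,\sigma_0,Y_0)$, which completes the proof.
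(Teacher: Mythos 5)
Your proposal is correct and follows essentially the same route as the paper: both reduce the Wasserstein distance to $\PE[|\hat Y^{(h)}_1-Y_1|]$ because the GBM coordinates are simulated exactly, then separate an exponential-moment factor (finite uniformly in $h$ for both the continuous CIR integral and its drift-implicit discretization, via $a>2\sqrt{2}\sigma_0$) from the $L^2$ error of $\hat I_h-\int_0^1 r_s\,\mathrm{d}s$, which is split into the strong Euler error and the Riemann-sum error exactly as in the paper's Lemmas \ref{lem:moment_delta_fini} and \ref{lem:exp_delta}. The only cosmetic difference is that you use $|e^a-e^b|\le(e^a+e^b)|a-b|$ with a two-factor Cauchy--Schwarz where the paper writes $Y_1(1-e^{\Delta_h})$ with $|1-e^x|\le|x|e^{|x|}$ and a three-way H\"older inequality.
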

\begin{proposition}
\label{prop:biais}
Assume that the portfolio parameters satisfy Assumption \ref{hyp:param} then for any $\ee>0$, there exists a constant $K_\ee$ independent of $h$ and $m$ such that:
$$
\norm{\mathbb{E}\big[\langle Z_1,w \rangle\un_{\langle Z_1,w \rangle \ge \theta} - \langle \hat{Z_1},w \rangle \un_{\langle \hat{Z_1},w \rangle  \ge \theta}\big]}{2}  \leq K_{\ee} \sqrt{m} e^{\frac{\{\sigma^+\}^2 m^2 }{4\ee^2}} 
h^{\frac{1}{6}-\ee}.
$$
where $\sigma^+ = \sup_{ 1 \leq i \leq m} \sigma_i<+\infty$.
\end{proposition}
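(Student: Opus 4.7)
Since the geometric Brownian motion components of $\hat{Z}_1$ are simulated exactly, the whole discrepancy arises from the CIR-driven coordinate: writing $X:=\langle Z_1,w\rangle$ and $\hat X:=\langle \hat{Z}_1,w\rangle$, one has $X-\hat X = w_0(Y_1-\hat{Y}_1^{(h)})$. I would first split
\begin{equation*}
\PE[X\un_{X\ge\theta}]-\PE[\hat X\un_{\hat X\ge\theta}] \;=\; \underbrace{\PE[(X-\hat X)\un_{\hat X\ge\theta}]}_{=:A} \;+\; \underbrace{\PE[X(\un_{X\ge\theta}-\un_{\hat X\ge\theta})]}_{=:B}.
\end{equation*}
Cauchy--Schwarz and $|w_0|\le 1$ yield $|A|\le \|Y_1-\hat{Y}_1^{(h)}\|_2$, and the $L^2$ strong-error bound for the drift-implicit Euler CIR scheme combined with the Riemann approximation of $\int_0^1 r_s\,ds$, collected in Appendix \ref{app:CIR} under Assumption \ref{hyp:param}, yields $|A|\le C\sqrt h$, which is already better than the claimed $h^{1/6-\ee}$ and is therefore harmless.

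The heart of the argument lies in $B$. A second Cauchy--Schwarz gives $|B|\le \|X\|_2 \cdot \PP(\un_{X\ge\theta}\ne\un_{\hat X\ge\theta})^{1/2}$. The crude estimate $(\sum_i w_i Z_1^i)^2\le m\sum_i w_i^2 (Z_1^i)^2$ together with the boundedness of $\PE[(Z_1^i)^2]$ under Assumption \ref{hyp:param} yields $\|X\|_2\le C\sqrt m$, which is the source of the $\sqrt m$ factor. For any $\eta>0$, the event $\{\un_{X\ge\theta}\ne\un_{\hat X\ge\theta}\}$ is contained in $\{|X-\theta|<\eta\}\cup\{|X-\hat X|>\eta\}$; the second piece is controlled by Markov as $\PP(|X-\hat X|>\eta)\le \|X-\hat X\|_2^2/\eta^2 \le C h/\eta^2$.

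The delicate piece is the small-ball probability $\PP(|X-\theta|<\eta)$. I would condition on all Brownian coordinates except one, say $W_{i^\star}$ with $i^\star=\arg\max_i w_i$ (so that $w_{i^\star}\ge 1/m$): the map $W_{i^\star}\mapsto X$ becomes strictly monotonic of the form $X = \mathcal{A} + \mathcal{B}\,e^{\sigma_{i^\star}\ell_{i^\star i^\star}W_{i^\star}}$, with $\mathcal{A},\mathcal{B}$ measurable with respect to the conditioning $\sigma$-field and $\mathcal{B}>0$. The preimage of $[\theta-\eta,\theta+\eta]$ under this map is an interval of logarithmic length; a pointwise Gaussian bound integrated against the law of the conditioning variables involves exponential moments of linear combinations of the $W_j$'s with weights $\sigma_i\ell_{ji}$. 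Applying the Young inequality $ab\le \ee a^2+b^2/(4\ee)$ to disentangle the $\eta$-factor from those moments, and then optimizing in $\ee$, produces
\begin{equation*}
\PP(|X-\theta|<\eta)\;\le\; K_\ee\, e^{(\sigma^+)^2 m^2/(4\ee^2)}\, \eta^{\,1-\ee},
\end{equation*}
where the $m^2$ in the exponential aggregates one power of $m$ from the $1/w_{i^\star}$ factor and one from the quadratic Gaussian moment bound across the $m-1$ log-normal components.

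Plugging everything into $|B|\le \|X\|_2\,\PP(\cdots)^{1/2}$ and balancing by taking $\eta=h^{1/3}$ gives $|B|\le \sqrt m \cdot K'_\ee \,e^{(\sigma^+)^2 m^2/(8\ee^2)}\, h^{(1-\ee)/6}$; a rescaling of the free parameter $\ee$ (absorbing the $1/6$ into its definition and the resulting constant into $K_\ee$) then delivers the announced estimate. The \textbf{main obstacle} is the small-ball estimate of the third step: since a weighted sum of log-normals has no tractable density, the one-dimensional conditioning trick and the Young-type optimization are essential to keep the bound \emph{uniform} in $\theta$ and to trade cleanly the power of $\eta$ against an exponential price in $m$, while preserving the $h^{1/6}$ rate coming from the $\sqrt h$--versus--$\eta$ balancing.
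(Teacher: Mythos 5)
Your overall architecture coincides with the paper's: both arguments exploit that only the CIR coordinate is discretized, reduce the indicator mismatch to a small-ball (``sliced event'') probability for $\langle Z,w\rangle$ plus a deviation probability for $Y_1-\hat{Y}_1^{(h)}$, and balance these against the $\sqrt{h}$ strong error to reach $h^{1/6}$. However, two of your key steps do not go through as stated. The first is the Chebyshev bound $\mathbb{P}(|X-\hat X|>\eta)\le Ch/\eta^2$, which requires $\mathbb{E}\big[(Y_1-\hat{Y}_1^{(h)})^2\big]\le Ch$. Writing $Y_1-\hat{Y}_1^{(h)}=Y_1(1-e^{-\Delta_h})$ and applying H\"older, such a second-moment bound would need exponential moments of $\int_0^1 r_s\,ds$ of order strictly larger than $4$ together with $\mathbb{E}[e^{q|\Delta_h|}]$ for some $q>2$; Assumption \ref{hyp:param} ($a>2\sqrt{2}\sigma_0$) only guarantees integrability of $e^{\lambda\int_0^1 r_s\,ds}$ for $\lambda<a^2/(2\sigma_0^2)$, a threshold only known to exceed $4$, and Lemmas \ref{lem:moment_delta_fini} and \ref{lem:exp_delta} are stated exactly at this boundary ($q\le 2$). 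This is precisely why the paper avoids any $L^2$ control of the error: it introduces an auxiliary truncation level $\zeta$ and bounds $\mathbb{P}\big(Y|1-e^{\Delta_h}|\ge\eta\big)\le \mathbb{E}[Y^r]\zeta^{-r}+\zeta\,\mathbb{E}[|\Delta_h|e^{|\Delta_h|}]/\eta$, which consumes only $\mathbb{E}[\Delta_h^2]$ and $\mathbb{E}[e^{2|\Delta_h|}]$. If you retreat to the available $L^1$ bound and use Markov, your two-term balance delivers only $h^{1/8}$ after the final square root, not $h^{1/6-\ee}$.

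The second gap is in the small-ball estimate. The conditioning trick as you describe it is broken by the correlation structure: conditioning on $(W_j)_{j\ne i^\star}$, every asset $S_j$ with $j\ge i^\star$ still depends on $W_{i^\star}$ through the Cholesky entries $\ell_{i^\star j}$, which may have either sign, so $X$ is of the form $\mathcal{A}+\sum_{j\ge i^\star}c_j e^{\sigma_j\ell_{i^\star j}W_{i^\star}}$ rather than a monotone single-exponential map, and the preimage of $[\theta,\theta+\eta]$ need not be an interval (moreover $i^\star$ may be the CIR coordinate, where $Y_1$ depends on the whole path of $W_0$). Even where monotonicity holds, the ``interval of logarithmic length'' degenerates as the level approaches the singular point of the exponential, so a truncation on $\{\min_i Z_i\ge\eta'\}$ is unavoidable; the paper performs exactly this truncation in Proposition \ref{prop:sliced_events} via the explicit joint density of Proposition \ref{prop:densite_bornee}, and because that density carries an unbounded factor $(z_1\cdots z_m)^{-1}$, the Cauchy--Schwarz step there only yields the exponent $\epsilon^{(1-\rho)/2}$ --- half of your claimed $\eta^{1-\ee}$. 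Since your final rate relies simultaneously on the exponent-one small ball and on the $L^2$ deviation bound, replacing either by what is actually established degrades the rate; both gaps must be filled, or the paper's three-parameter ($\epsilon,\zeta,r$) optimization adopted, for the argument to close.
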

The proofs of these two results is postponed to Appendix \ref{app:CIR}.
{We emphasize that these two satellite results contain some sharp new estimations of numerical probability nature on the Euler scheme used for the simulation of our portfolio, and may be of independent interest for applications in mathematical finance. They are not so easy to obtain in particular regarding the influence of the number of assets $m$ and need some technical computations to obtain this last dependency that is crucial in our stochastic optimization procedure to design an efficient bias reduction strategy.}

\subsection{Stochastic Mirror Descent with sampling approximation}

We finally aggregate the optimization procedure described in Algorithm \ref{algo:SMD} with our sampling scheme in Algorithm \ref{algo:Simu_portefeuille} and we address the problem of adapting the step size of the discretization scheme to the step size of the SMD approximation in light of Theorems \ref{theo:bias_SMD_independent}, \ref{theo:bias_SMD_finite} and Propositions \ref{prop:wasserstein} and \ref{prop:biais}.

\paragraph{SMD at fixed time horizon $n$}
According to our previous results, we can now consider the choice of parameters induced by Corollary \ref{cor:finite_horizon}. Recall that we assume a constant step-size sequence $\eta$ and discretizaton step-size $h$. Corollary \ref{cor:finite_horizon}
combined with  Propositions \ref{prop:wasserstein} and \ref{prop:biais} induce that $h$ should be chosen as:
$$h^{1/4}+ h^{\frac{1}{6}-\ee}\sim n^{-1/2},$$
which entails that we could choose a discretization step-size close to $n^{-3}$.

\paragraph{SMD with decreasing step-size sequence}
Let us now choose adequate step-size sequences in order to obtain an a.s. convergent optimization algorithm (Theorem \ref{theo:bias_SMD_independent}). 
Recall that the sequence $(\eta_k)_{k \ge 1}$ scales the step-size of stochastic gradient descent and assume that 
$$\eta_k=k^{-\alpha}, \quad \alpha\in(\frac12,1].$$
We now chose the step sequence for approximating the CIR process as:
$$h_k= h^{(m)}_0 k^{-\beta}, \quad \beta>0.$$
According to Propositions \ref{prop:wasserstein} and \ref{prop:biais}, using the notations of Theorem  \ref{theo:bias_SMD_independent}, we deduce that:
$$\delta_k=  k^{-\frac{\beta}{2}} \qquad \text{and}\qquad v_k=K_{\ee} \sqrt{m} e^{\frac{\{\sigma^+\}^2 m^2 }{4\ee^2}} 
h^{-\beta(\frac{1}{6}-\ee)}.$$
Assumption in Theorem \ref{theo:bias_SMD_independent}
$\sum_{k\ge0} \eta_{k+1} (\sqrt{\delta_{k+1}}+\upsilon_{k+1}) < + \infty$
 now reads:
$$\sum_{k\ge1} k^{-\alpha}\big(k^{-\frac{\beta}{4}}+ k^{-\beta(\frac{1}{6}-\ee)}\big) <\infty,$$
which is equivalent to:
$$\alpha + \frac{\beta}{6}>1, \qquad \text{with}\quad \alpha\in(\frac12,1].$$
With these conditions on $\alpha $ and $\beta$, we can now derive the non-asymptotic bound Theorem \ref{theo:bias_SMD_finite}. We set $\epsilon= \alpha+\frac{\beta}{6}-1>0$,
using the notations of Theorem \ref{theo:bias_SMD_finite}, we deduce that:
$$a_{k+1} \sim k^{-1-\epsilon} \quad\text{and}\quad  b_{k+1} \sim k^{-(2\alpha\wedge (1+\epsilon))}. $$
Using comparisons between series and integral, we find that: 
$$\mathbf{D}_\phi^k\leq D_\Phi^0 \exp(k^{-\epsilon}))+ k^{-(2\alpha-\epsilon+1)}.$$
If we use the notation $\lesssim$ that denotes an inequality up to a constant $C_m$ which heavily depends on the number of assets in the portfolio, we then obtain that:
\begin{align*}
\PE [p_\lambda(\bar{X}^\eta_n)]-&p_\lambda(x^\star_\lambda)
\le \frac{1}{\sum_{j=0}^{n-1}\eta_{j+1} }\bigg(\mathbf{D}_\Phi^0+ \sum_{k=0}^{n-1} \Big[a_{k+1} \mathbf{D}_\Phi^k + b_{k+1}\Big] \bigg)\\
\lesssim &\frac{1}{n^{1-\alpha}}\bigg(\mathbf{D}_\Phi^0+ \sum_{k=0}^{n-1} \Big[k^{-1-\epsilon} [ \mathbf{D}_\Phi^0 \exp(k^{-\epsilon}))+ k^{-(2\alpha+\epsilon-1)} ]+ k^{-(2\alpha\wedge (1+\epsilon))} \Big] \bigg)\\
\lesssim&\, n^{\alpha-1} \mathbf{D}_\Phi^0\Big(1+\exp(n^{-\epsilon}) \Big) +n^{-\alpha-2} + n^{-\alpha\wedge (1+\epsilon-\alpha)}\\
\lesssim  &\,n^{\alpha-1} \mathbf{D}_\Phi^0 + n^{-\alpha\wedge \frac{\beta}{6}}.
\end{align*}
We point out that choosing $\alpha=1/2$ and $\beta>3$ allows for a convergence rate $O(n^{-1/2})$ which is similar to the previous case.

\section{Simulations}\label{sec:num}

In this brief numerical paragraph, we illustrate the behavior of our algorithm on both synthetic and real datasets. The simulations have been driven using Python 3.7.13 on a standard computer.
\subsection{Description of the simulation study}
\paragraph{Bregman divergence vs Euclidean projection}
In order to assess the specificity and efficiency of our algorithm, we also consider the projected stochastic gradient optimization that is obtained after a stochastic gradient step-size + a Euclidean projection on the simplex of probability distribution. The projection has been exactly computed with the help of the standard recursive method derived from the Lagrangian formulation.
 Below, we will refer to SMD for the stochastic mirror descent and PSGD for the projected stochastic gradient descent  (whose definition is straightforward (see \cite{singer,blondel,condat}). In what follows, we have used the recent algorithm introduced in \cite{singer} to project any points into the simplex of probability measure for which a fast  implementation is available in \cite{blondel} and for which a complexity bound of $O(m^2)$ is established in \cite{condat}. {That being said, it is also shown in \cite{condat} that $O(m^2)$ is the worst-case complexity  of the method while in practice the observed complexity is only $O(m)$. A such complexity is also assessed in expectation when using a random pivot strategy as proposed in \cite{Kiwiel}. As reported in Table 1 of \cite{condat}, the observed complexity of several projection methods, including the one of \cite{condat}, is $O(m)$. We have chosen to compare SMD with PSGD with the projection method of \cite{condat} as indicated in Section 4 of \cite{condat}, their state of the art algorithm generally outperforms the other methods for reasonable vectors to be projected into the simplex.}
 
\paragraph{SMD vs. MC-MD}
 We also compare our algorithm with a batch Monte-Carlo Mirror Descent, \textit{e.g.} we use a mini-batch simulation instead of a single one at each step of the algorithm. Therefore, in the Monte-Carlo Mirror Descent (MC-MD), an additional loop is included in order to perform a Monte-Carlo estimation of the gradients \eqref{eq:partial_u} and \eqref{eq:partial_theta}. Of course, a such strategy improves the quality of the estimation but has a supplementary cost in terms of simulation times. We present below a comparison of the two methods on synthetic data  where we will compare the results of SMD and MC-MD in terms of numerical costs in seconds.
 
 \paragraph{Diversification and influence of $\lambda$}
 To undertake the influence of the penalty parameter $\lambda$, and in particular its effect on the composition of the optimal portfolio, we vary its value in a regular grid $\big(\lambda_{\min} + s \frac{\lambda_{\max}-\lambda_{\min}}{\Lambda}\big)_{0 \leq s \leq \Lambda}$ defined with the help of a lower and upper value $(\lambda_{\min},\lambda_{\max})$ and a number of points $\Lambda+1$ in the grid. According to Proposition \ref{prop:equivalence}, it is expected that large values of $\lambda$ essentially favor low risk portfolio composition whereas small values of $\lambda$ allow for risky portfolio with higher expected returns.
 
\paragraph{Efficient frontier construction}
Since the value of $\lambda$ has to be determined from a practical point of view, we decided to use the Markowitz efficient frontier approach \cite{markowitz:1952}. The efficient frontier was initially introduced with the standard deviation as a natural measure of the risk of the portfolio. This curve was initially obtained by representing the risk in the X-axis and the Expected Return in the Y-axis. Below, we replace the volatility by the CV@R as a natural risk measure as proposed in \cite{Krokhmal:2001} to build our Markowitz efficient frontier. The frontier is constructed as follow, for an integer {$ s \in [0,\Lambda]$}, we compute with our SMD algorithm an approximation of the optimal portfolio associated to this penalty parameter $\lambda_s = \lambda_{\min} + s \frac{\lambda_{\max}-\lambda_{\min}}{\Lambda}$, \textit{i.e.} the set of weights $(\hat{u}_{\lambda_s},\hat{\theta}_{\lambda_s})$ that approximately minimizes $p_{\lambda_s}$:
$$
 (\hat{u}_{\lambda_s},\hat{\theta}_{\lambda_s}) =  \arg\displaystyle\min_{(u,\theta) \in \Db_m  \times \rset} \, p_{\lambda_s}.
$$

For convenience, we denote $$p_{\lambda} (u,\theta)=-J(u) + \lambda \psi_\alpha(\theta,u)$$ where $J(u)=- \sum_{i=1}^m u_i \mathbb{E}[Z_i]$.

We then draw the frontier obtained $(CV@R_{\alpha}(\hat{u}_{\lambda_s}),J(\hat{u}_{\lambda_s}))$ that may be computed on-line all along the iterations of the SMD algorithm with a Cesaro averaging strategy: if $(X_k)_{k \ge 0} = (u_k,\theta_k)_{k \ge 0}$ corresponds to a random sequence built with Algorithm \ref{algo:SMD} stopped at iteration $n$ and if $(\hat{Z}^k)_{k \ge 0}$ is the biased porfolio simulation sequence used all along Algorithm \ref{algo:SMD}, then we may approximate the expected return $J(\hat{u}_{\lambda_s})$ and the risk $CV@R_{\alpha}(\hat{u}_{\lambda_s})$ by:
  $$
\frac{1}{n}  \sum_{k=1}^n \langle u_k,\hat{Z}^{k} \rangle \quad \text{and} \quad \frac{1}{n} \sum_{k=1}^n \Big(\theta_k+ \frac{1}{1-\alpha}  \big(\langle \hat{Z}^k,u_k\rangle - \theta_k \big)^+\Big).
  $$
We emphasize that since this construction requires the optimization of the portfolio for each value of $\lambda$, it is mandatory to design an efficient and fast procedure to solve each optimization problem, which legitimates the use of our stochastic algorithm.

Finally, we propose to use the Sharpe ratio criterion (see \textit{e.g.} \cite{Sharpe}) to select the optimal portfolio among all the portfolio found on the Markowitz frontier. More precisely, if $\un_{CIR}$ refers to the risk-free portfolio hedging $\un_{CIR}=(1,\ldots,0)$, 
 the Sharpe ratio criterion selects the portfolio that maximizes the ratio between the difference $J(\hat{u}_{\lambda_s}) - J(\un_{CIR})$ and the risk measured by $CV@R_{\alpha}(\hat{u}_{\lambda_s})$.
\subsection{Synthetic dataset\label{sec:synthetic_app}} 
In this first set of simulations, we implement our SMD with purely artificial assets that are driven with a CIR and several GBM, whose parameters are chosen such that the mean reward / Expected Return (ER for short below) and the individual CV@R monotonically vary together. Figure  \ref{fig:trajectory} represents one simulation run of the assets we used. At this stage, we do not use any correlation between assets, which is of course a very simplifying assumption that is typically false for financial application purposes. In a second stage in Section \ref{sec:finance_app}, we will consider correlated assets. 

\begin{figure}
\begin{center}
\includegraphics[width=0.95\linewidth]{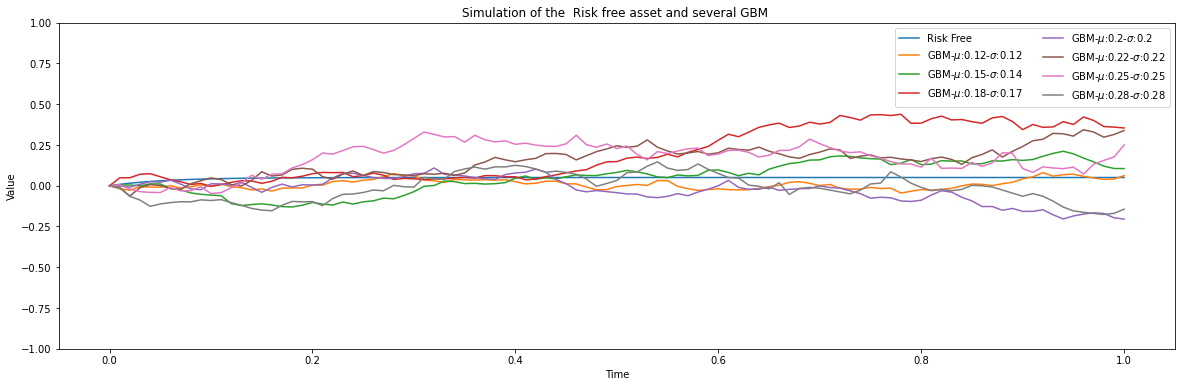}
\caption{Time evolution of the return of the discretized trajectories associated to the assets of the synthetic portfolio (CIR + GBM).\label{fig:trajectory}}
\end{center}
\end{figure}
\paragraph{Comparison between SMD and PSGD}
Our first experiment consists in comparing the behaviour of the SMD and the  PSGD. The value of $\lambda$ used here is $\lambda=0.9$.

Figure \ref{fig:comparaison_SMD_PSGD_1} shows that there is no real evidence that SMD and PSGD lead to very different results in terms of the rate of convergence of $p_{\lambda}(X_k) \longrightarrow \min (p_{\lambda})$, even though as indicated in Figure \ref{fig:comparaison_SMD_PSGD_1} the limiting weights reached by the two algorithms may not be the same. As indicated in Figure  \ref{fig:comparaison_SMD_PSGD_3}, the performances in terms of ER and CV@R are equivalent, which indicates that the convex function $p_\lambda$ may have an infinite set of minima, with a continuum of critical points. Even in this theoretically difficult situation, the trajectory of SMD (and the one of PSGD) almost surely converges as indicated in Theorem \ref{theo:bias_SMD_independent} and as shown in Figure \ref{fig:comparaison_SMD_PSGD_1}.

\begin{figure}
\begin{center}
\includegraphics[width=7cm,height=5cm]{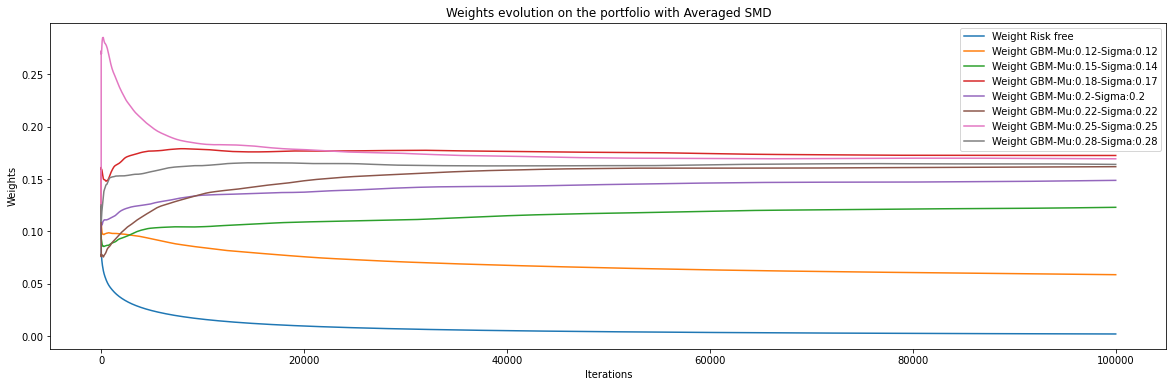}
\includegraphics[width=7cm,height=5cm]{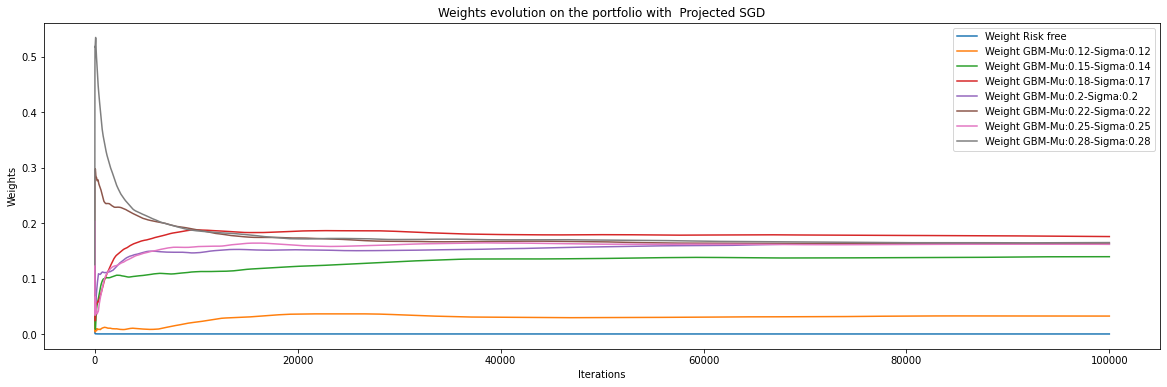}
\end{center}
\caption{Evolution of the weights of the assets with the number of iterations of the SMD (left) and of the PSGD (right) on $p_\lambda$ with $\lambda=0.9$ and $\alpha=0.05$.  The composition of optimal portfolio may be different as represented above with SMD and PSGD.\label{fig:comparaison_SMD_PSGD_1}}
\end{figure}

\begin{figure}
\begin{center}
\includegraphics[width=7cm,height=5cm]{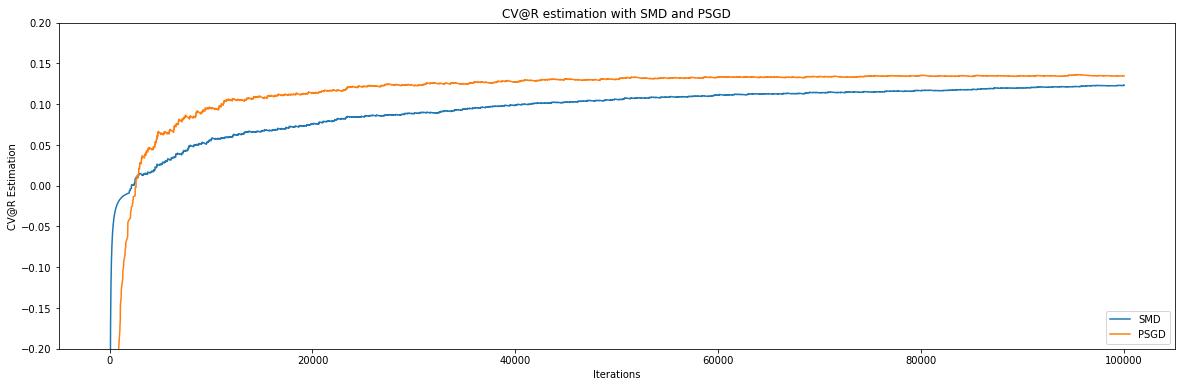}
\includegraphics[width=7cm,height=5cm]{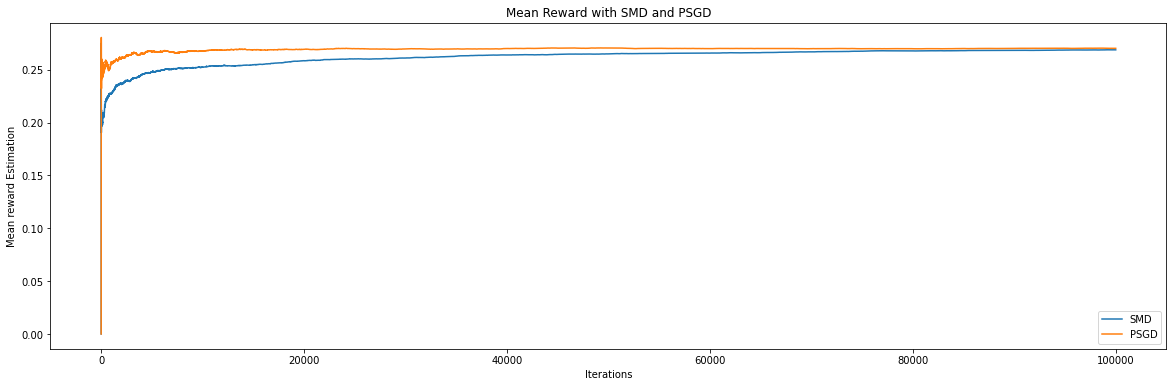}
\end{center}
\caption{Evolution of the CV@R and ER of the optimal portfolio with the number of iterations of the SMD (orange) and of the PSGD (blue) on $p_\lambda$ with $\lambda=0.9$ and $\alpha=0.05$. \label{fig:comparaison_SMD_PSGD_3}}
\end{figure}

Even though both algorithms achieve almost the same results in terms of weights, CV@R and ER, the numerical cost associated to the Euclidean projection is important and prevents from the use of the PSGD with a large number of assets. Table \ref{tab:cout} provides a brief comparison of the the computational time ratio between PSGD and SMD, which demonstrates that SMD with several dozen of assets requires generally at least  $5$ times less
computations when compared to the update with PSGD.
 This phenomenon is of course amplified when $m$ increases since the projection algorithm requires more and more operations in larger dimensions.

\begin{table}[ht]
\centering
\begin{tabular}{c  |  c c}
&  $n=10^4$ & $n= 10^5$ \\
\hline
$m=5$& 2 & 3 \\
$m=20$ &3 & 5 \\
$m=40$  & 4& 6\\
$m=80$   &5& 7\\
\end{tabular}
\caption{Ratio of computation with $n$ iterations on $m$ assets between PSGD and SMD (rounded at the closest integer).}
 \label{tab:cout}
\end{table}

{All the more, as indicated in Figure \ref{fig:comparaison_SMD_PSGD_3} (the same conclusion can be drawn from Figure \ref{fig:comparaison_SMD_PSGD_1}), the speed of convergence of SMD seems fastest than the one of PSGD from a numerical point of view. Of course, this remark is purely based on empirical observations from numerical simulations and not on  a mathematical  results as the theoretical rates of convergence of the methods are equivalent (both rates evolve as $n^{-1/2}$ where $n$ is the number of iterations of the algorithms and as $n^{-1}$ with the supplementary Ruppert-Polyak averaging strategy).
This is certainly due to the fact that the averaged SMD algorithm evolves smoothly over the simplex while the averaged PSGD approach is much more irregular due to the sequence of gradient step outside + projection inside the simplex. Finally, this last remark associated with the supplementary computational time due to the projection over the simplex (instead of the instantaneous mirror update) amplifies the benefits of SMD when compared to PSGD regarding the speed of computation. This last remark is inline with some recent observations in the machine learning community (see \textit{e.g.} \cite{zhou:etal:2017}).}

{\paragraph{Comparison between SMD and MC-MD}
We now present the result of the comparison of SMD and MC-MD. To take into account the computational cost of the MC step, we represent the estimation of the algorithm with respect to the computational time (in second).  In these simulation, we draw 10 independent realization of the portfolio in the MC step. We observe that the supplementary accuracy of the estimation of the gradient, obtained at each iteration, is not sufficient to overcome the additional supplementary cost (in terms of seconds) generated by the Monte-Carlo step.
As indicated by the right hand side of Figure \ref{fig:comparaison_SMD_MCMD_1}, it appears that both SMD and MC-MD converge to the same portfolio when we push the number of iterations (e.g. the computational time), which lead to the same value of the V@R of the portfolio.
We have compared in Figure \ref{fig:comparaison_SMD_MCMD_1} the learning speed of SMD and of MC-MD of the optimal weights and of the optimal V@R. Our simulations seem to recommend the use of a single stochastic simulation per iteration instead of the use of a mini-batch MC strategy for the problem we are studying.}

\begin{figure}[h]
\begin{center}
\includegraphics[width=7cm,height=5cm]{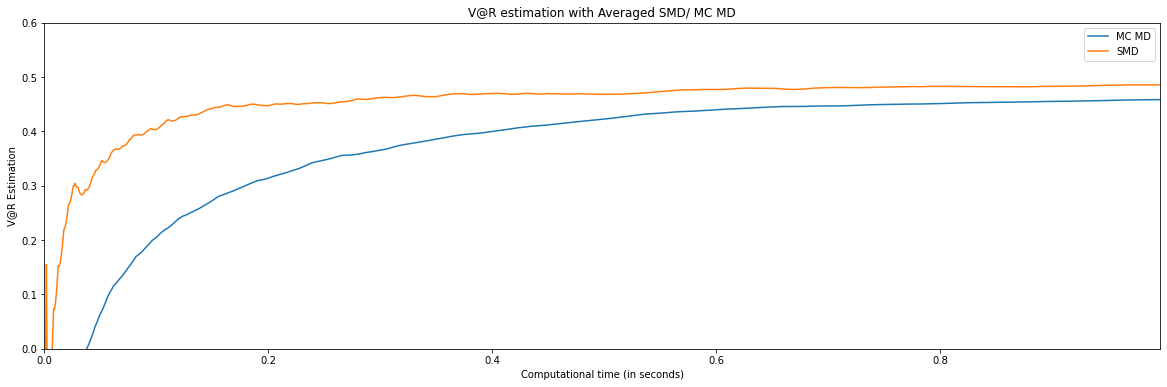}
\includegraphics[width=7cm,height=5cm]{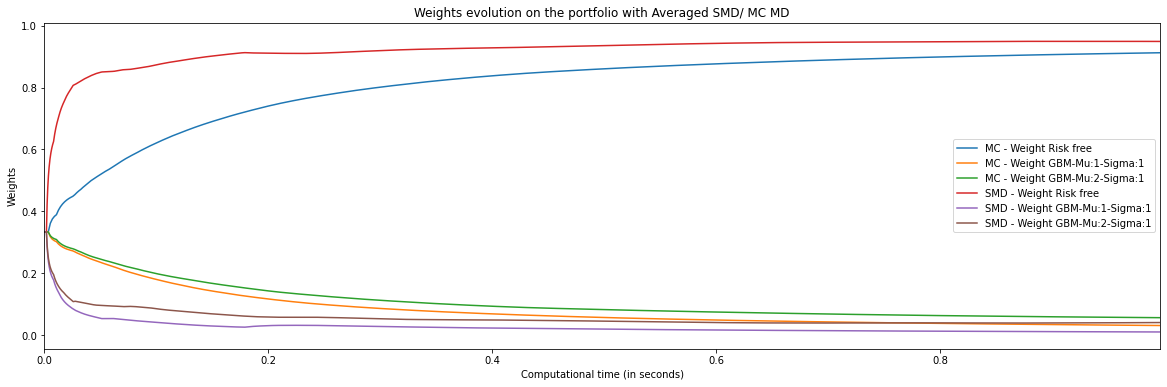}
\end{center}
\caption{Evolution of the V@R estimation (left) of the portfolio with the computational time of the SMD and of the MC-MD $p_\lambda$ with $\lambda=0.95$ and $\alpha=0.05$.  The composition of optimal portfolio (right) converges to similar values as represented on the right with SMD and MC-MD but with a different cost in computational time.\label{fig:comparaison_SMD_MCMD_1}}
\end{figure}

\paragraph{Effect of the penalty parameter $\lambda$}
In Figure \ref{fig:effect_lambda}, we assess the influence of the penalty coefficient $\lambda$ involved in $p_{\lambda}$ on the optimal solutions obtained with the SMD. The experiments have been set up with 3 and 8 assets in the portfolio. The fluctuations of the curves in Figure \ref{fig:effect_lambda} are due to the early stopping strategy we adopted to limit the computational cost, but the following conclusions are highly trustable since the evolution of each curve is rather clear.
 We have indicated in the legend of each subfigure in Figure \ref{fig:effect_lambda} the ER of each individual asset that is estimated on-line with our algorithm, as well as the CV@R. We observe that for small values of $\lambda$, the optimal portfolio with 3 and 8 assets favor the most risky ones and do not weight the risk-free component. It is of course the opposite behaviour for large values of $\lambda$ when essentially the CIR component gathers the essential weight of the portfolio. Finally, in the intermediary regime, the CV@R penalty generates a product diversification, which is recovered both on the left and on the right of Figure \ref{fig:effect_lambda}, which is also exactly the objective of the CV@R penalty.

\begin{figure}
\begin{center}
\includegraphics[width=7cm,height=5cm]{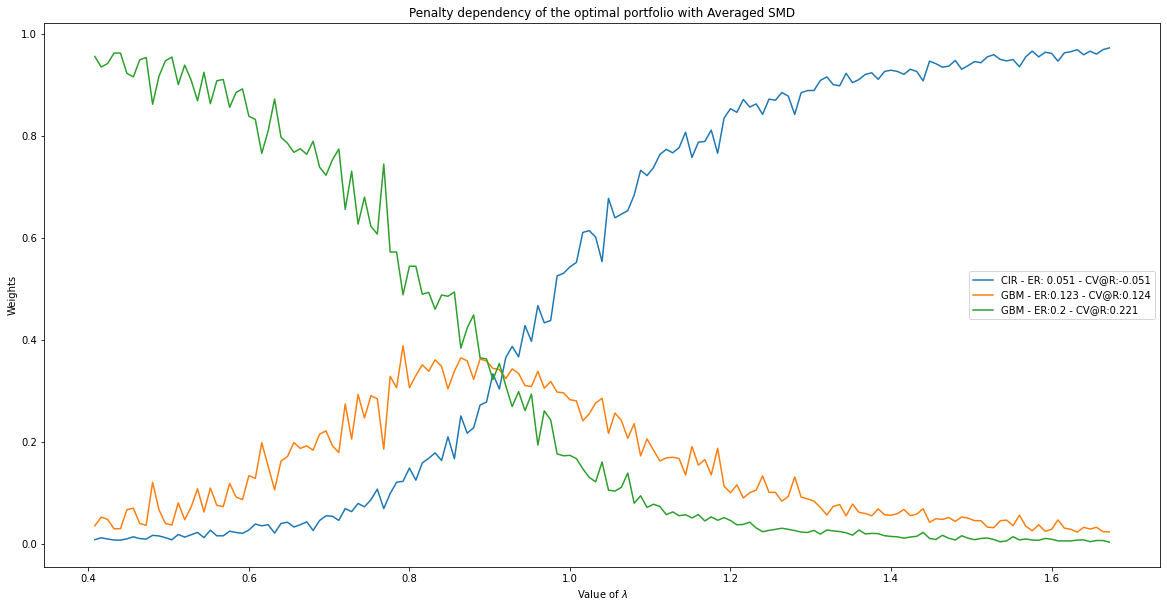}
\includegraphics[width=7cm,height=5cm]{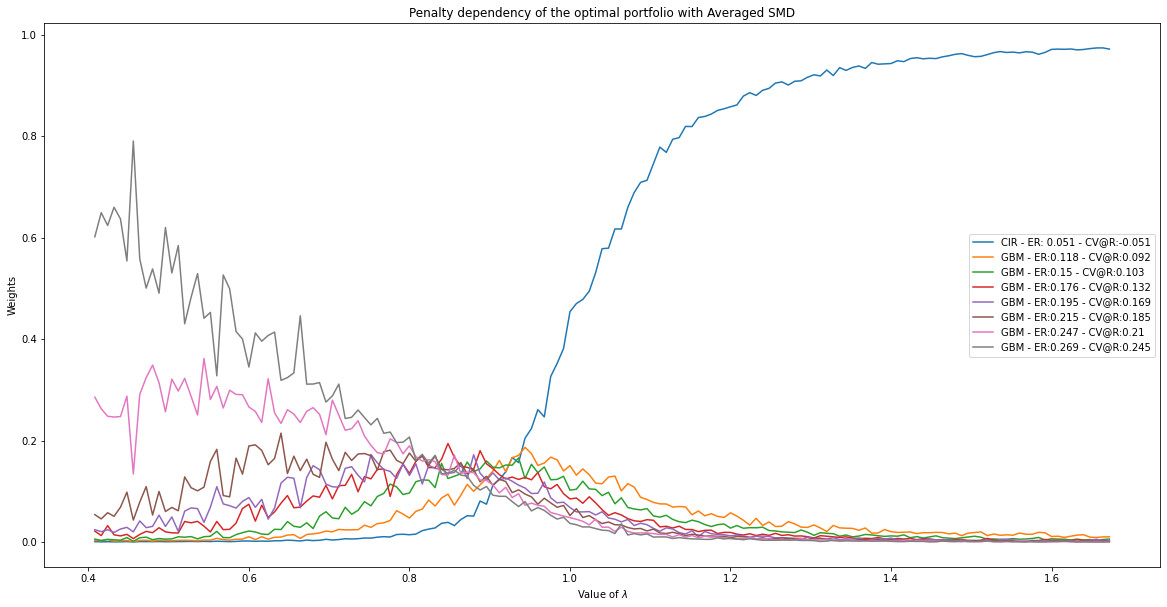}
\end{center}
\caption{ Composition of the optimal portfolio when $\lambda$ increases. Left: 3 assets, Right: 8 assets. ER and CV@R are indicated in the legend box of each subfigure. \label{fig:effect_lambda}}
\end{figure}
\subsection{Real dataset \label{sec:finance_app}}
We briefly detail how we shall use our algorithm to estimate optimal portfolio. This short numerical paragraph should be understood as a proof of concept study since we believe that it illustrates the ability of our method.
\paragraph{Yahoo! dataset and parameters estimation}
In this final study, we consider some stock prices that are daily recorded on the Yahoo! website \url{https://finance.yahoo.com/lookup} and that are freely available with the yahoo-finance API of Python \cite{data}.

We use the daily dataset of the Treasury Yield 5 Years ($\hat{}$\,FVX) to calibrate the null risk asset as well as several assets that are detailed in Figure \ref{fig:assets_portfolio}, between 2014 and 2016 (to avoid the period of negative interest rates). We model the $\hat{}$\,FVX time series as a daily realization of a CIR stochastic model whereas risky assets are considered as correlated GBM. Parameters are estimated following the optimal martingale method as it is reported in \cite{these} as it outperforms the discretized log-likelihood maximization approach (for discretely observed CIR realizations). We refer to Equation 4.2 for the drift coefficients and Equation given at the end of Section 4.3 of \cite{these}.

Concerning the GBM, we estimate their coefficients ($\mu,\sigma$ and $\rho$) with the log-return series, which is a standard procedure described among other in Section 3.2.3 of \cite{Glasserman}  for example.

\begin{figure}[h!]
\begin{center}
\includegraphics[scale=0.23]{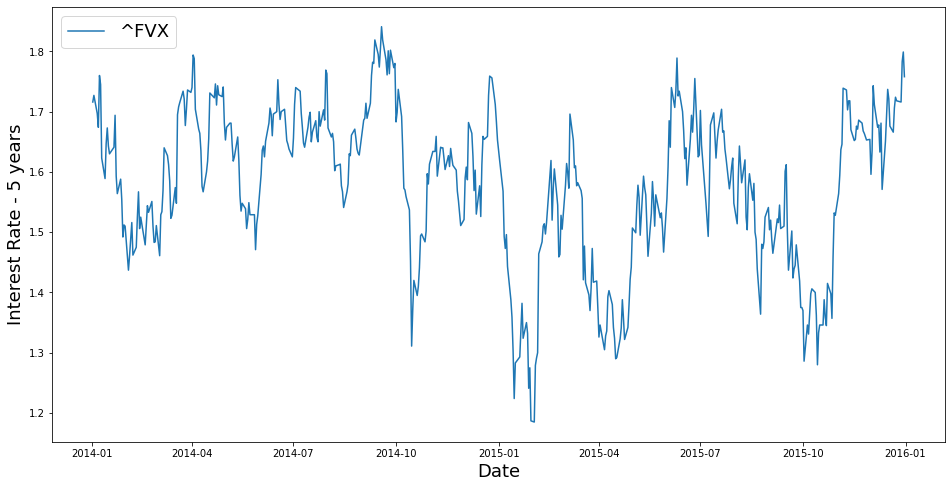}
\includegraphics[scale=0.23]{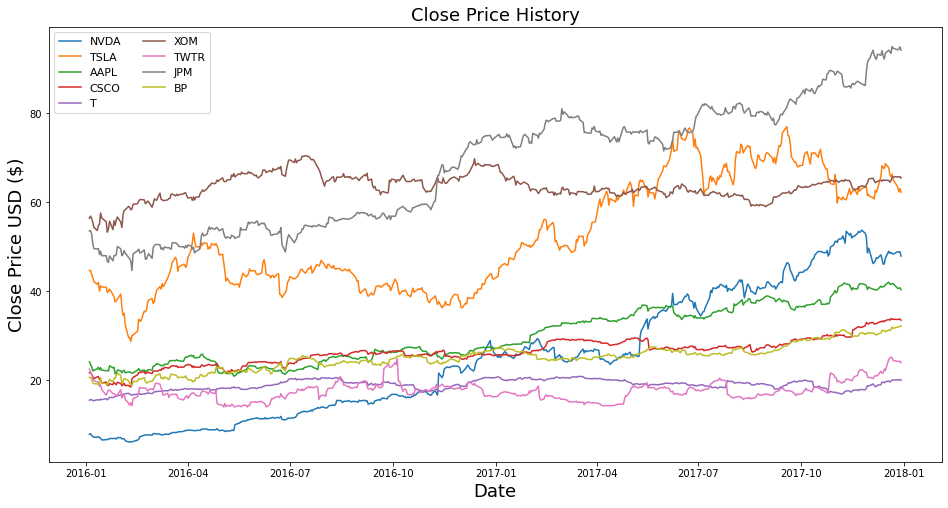}
\end{center}
\caption{Sample of the $\hat{}$\,FVX series and of the assets that composed our portfolio, from the Yahoo! database, bewtween the 01-01-2014 and 01-01-2018 (2016 for the $\hat{}$\,FVX time series). \label{fig:assets_portfolio}}
\end{figure}

\paragraph{Markowitz efficient frontier with CV@R}
Once the parameters are estimated, we assume that their values are kept fixed for the next daily observation so that we are able to use our SMD Monte-Carlo approach to estimate the optimal portfolio associated to these financial assets.  We then use our SMD to estimate the Markowitz efficient frontier, adapted with the CV@R as a measure of risk of the portfolio. We represent we obtain in Figure \ref{fig:Markowitz} for two different level of quantiles ($\alpha=5\%$ and $\alpha=1\%$).
\begin{figure}[h!]
\begin{center}
\includegraphics[scale=0.2]{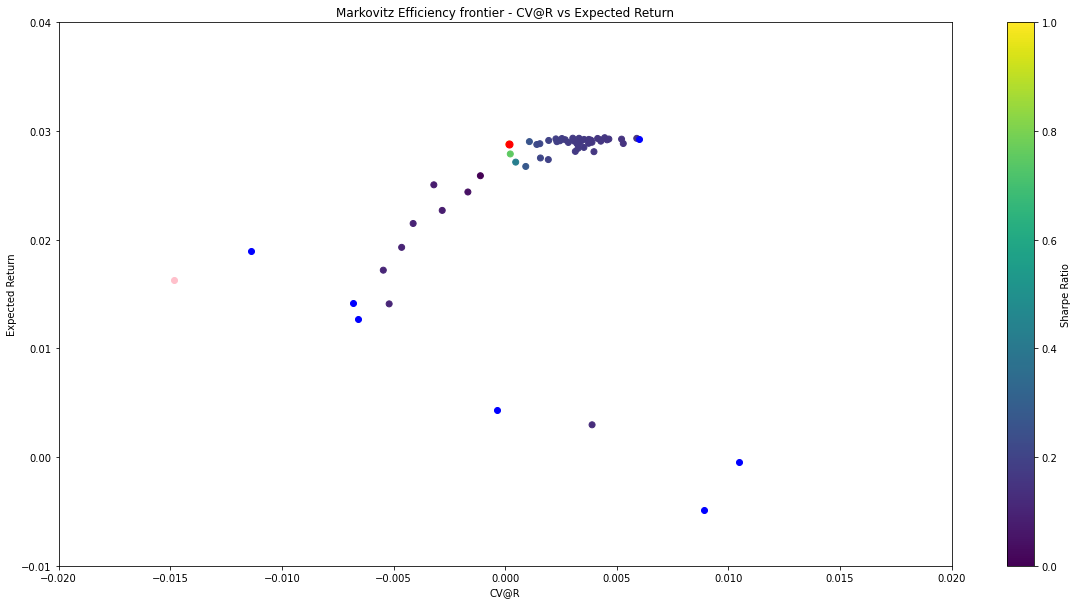}
\includegraphics[scale=0.2]{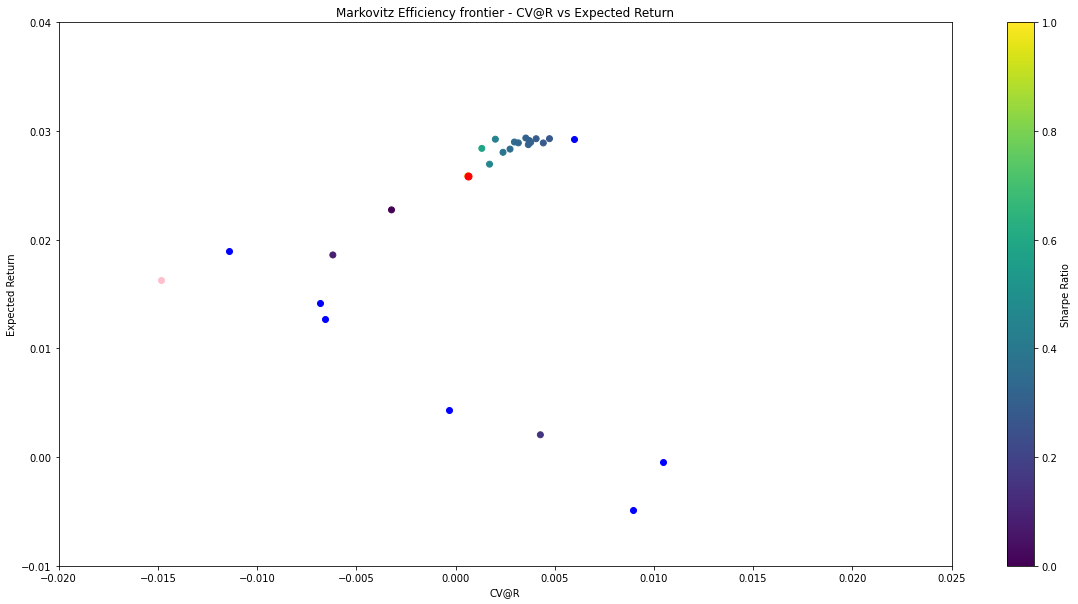}
\end{center}
\caption{Markowitz Efficient frontier. Left: $\alpha=5\%$ and Right: $\alpha=1\%$. Red point: optimal Sharpe ratio location. Blue points: location of the assets involved in the portfolio. Pink: risk free asset. \label{fig:Markowitz}}
\end{figure}

We use for the construction of our frontier a discretization of the SDE and compute the return of the portfolio for T=30 days. The red point represents the position of the "optimal" portfolio according to the Sharpe ratio. The computational cost for the construction of the Markowitz frontier with 10 assets and a time window $h \propto n^{-3}$ appears to be reasonnably fast since we compute these frontiers on standard PC with Python 3.0 in less than 10 minutes each. As indicated in Figure \ref{fig:Markowitz}, SMD produces each time an optimal portfolio that really balances the CV@R and the ER.
\section*{Competing Interests}
The authors declare no competing interests.

\appendix\normalsize

\section{Lagrangian formulation  \label{sec:appendix_lagrange}}
This appendix is dedicated to the proof of the Lagrangian unconstrained formulation used to deal with the \cva constraint.

\begin{proof}[Proof of Proposition \ref{prop:equivalence}]

We observe that $J(u) = - \sum_{i=1}^m u_i \mathbb{E}[Z_i] $
is a continuous convex function, defined on the convex simplex $\Db_m$ and $\mathcal{P}_M$ is a convex optimization problem defined with a constraint $g_M$ defined through a \cva inequality.
It is well known that \cva\ induces a coherent measure of risk, which implies the two important properties of positive homogeneity and sub-additivity. We refer to \cite{Pflug} for this important remark, and to \cite{Artzner97,Artzner99} for seminal contributions on risk measurements in mathematical finance. The important consequence of a such coherence is the convexity of the function that defines the constraint in $\mathcal{P}_M$. We verify that $\forall \lambda \in (0,1),$  $\forall (u,v) \in \Db_m$:
\begin{align*}
 \cva(\lambda u + (1-\lambda) v) &\leq  \cva(\lambda u) + \cva((1-\lambda)v)\\
&=\lambda \cva(u)+(1-\lambda) \cva(v).
\end{align*}

We are led to define the convex function $g_M$ that induces the constraint in $\mathcal{P}_M$ by:
$$
g_M: u \longmapsto \cva(u)-M.
$$
We shall observe that the collection of functions $(g_M)_{M \in \rset}$ defines a family of convex constraints and $\exists M_0 \in \rset$ such that for all $M > M_0$, { the constraint $g_M$ is strictly feasible: there exists $u \in \Db_m$ that is an interior point such that $g_M(u)<0$}. For a such $M > M_0$, we then define the Lagrangian function:
$$
\mathcal{L}_M(u,\lambda) := J(u) + \lambda g_M(u).
$$
{From the Slater condition, we deduce that the strong duality involved in $\mathcal{P}_M$ holds}: there is no duality gap and primal and dual optimization problems coincide. In particular, the Karush-Kuhn-Tucker conditions are satisfied: the solution $u^\star_M$ of the  primal problem $\mathcal{P}_M$  verifies:
\begin{equation}\label{eq:primal_lagrangian}
\exists \, \lambda^\star_M \ge 0 \quad u^{\star}_M = \arg\min_{u \in \Db_m} \mathcal{L}_{{M}}(u,\lambda^\star_M),
\end{equation}
and the complementary slackness is verified for the  pair $(u^\star_M,\lambda^\star_M)$:
\begin{equation}\label{eq:complementary}
\lambda^\star_M g_M(u^\star_M)=0.
\end{equation}

{We now establish the monotonous property. We consider a pair $(M_1,M_2)$ and consider $(u_1^\star,\lambda_1^\star)$ and $(u_2^\star,\lambda_2^\star)$ derived from the previous Lagrangian equation. We know that the complementary slackness condition holds for both $(u_1^\star,\lambda_1^\star)$ and $(u_2^\star,\lambda_2^\star)$.} Equations  \eqref{eq:primal_lagrangian} and \eqref{eq:complementary} entail:
\begin{align}
J(u_1^\star) &= {J}(u_1^\star)+\lambda_1^{\star} g_{M_1}(u_1^\star)\nonumber\\
 & = \mathcal{L}_{{M_1}}(u_1^\star,\lambda_1^{\star}) \nonumber\\
&
\leq  \mathcal{L}_{{M_1}}(u_2^\star,\lambda_1^{\star})  \nonumber\\
& = J(u_2^\star)+\lambda_1^{\star} g_{M_1}(u_2^\star) \nonumber\\
& = {J(u_2^\star) + \lambda_2^{\star} g_{M_1}(u_2^\star) + (\lambda_1^\star-\lambda_2^\star)  g_{M_1}(u_2^\star) }\nonumber\\
& = {J(u_2^\star) + \lambda_2^{\star} \big( g_{M_2}(u_2^\star)+M_2-M_1\big) + (\lambda_1^\star-\lambda_2^\star)  g_{M_1}(u_2^\star) }\nonumber\\
& = { \mathcal{L}_{{M_2}}(u_2^\star,\lambda_2^{\star}) + \lambda_2^{\star} \big( M_2-M_1\big) + (\lambda_1^\star-\lambda_2^\star)  g_{M_1}(u_2^\star) }\nonumber\\
& \le {\mathcal{L}_{{M_2}}(u_1^\star,\lambda_2^{\star}) + \lambda_2^{\star} big( M_2-M_1\big) + (\lambda_1^\star-\lambda_2^\star)  g_{M_1}(u_2^\star) }\nonumber\\
&{=} J(u_1^\star)+\lambda_2^{\star} g_{M_2}(u_1^\star) + \lambda_2^\star (M_2-M_1)  +(\lambda_1^\star-\lambda_2^\star) [\cva(u_2^\star)-M_1]\nonumber\\
&{= J(u_1^\star)+\lambda_2^{\star} g_{M_1}(u_1^\star)   +(\lambda_1^\star-\lambda_2^\star) [g_{M_2}(u_2^\star) +M_2-M_1]}\nonumber\\
&\le   J(u_1^\star)+\lambda_2^{\star} g_{M_1}(u_1^\star) +(\lambda_1^\star-\lambda_2^\star) (M_2-M_1) +\lambda_1^{\star} g_{M_2}(u_2^\star),\label{eq:tec_inter}
\end{align}
{
where we used in the last line Equation \eqref{eq:complementary} at point $u_2^\star$.
Since Since $g_{M_1}(u_1^\star) \le  0$  and $g_{M_2}(u_2^\star) \le  0$ while $\lambda_1^\star \ge 0$ and  $\lambda_2^\star \ge 0$, we deduce from Equation \eqref{eq:tec_inter} that:
$$
J(u_1^\star) \leq J(u_1^\star) +(\lambda_1^\star-\lambda_2^\star) (M_2-M_1) .
$$
 It finally implies that 
$$
M_1<M_2 \Longrightarrow \lambda_1^\star \ge \lambda_2^\star.
$$
}
 
We now consider the penalized criterion with $\lambda_1>\lambda_2$, we verify that:
\begin{align*}
J(v_{\lambda_1}) + \lambda_1 \cva(v_{\lambda_1}) & \leq  J(v_{\lambda_2}) + \lambda_1 \cva(v_{\lambda_2}) \\
& =   J(v_{\lambda_2}) + \lambda_2 \cva(v_{\lambda_2}) +  (\lambda_1-\lambda_2)\cva(v_{\lambda_2}) \\
& \leq  J(v_{\lambda_1}) + \lambda_2 \cva(v_{\lambda_1}) +  (\lambda_1-\lambda_2)\cva(v_{\lambda_2}) \\
& =   J(v_{\lambda_1}) + \lambda_1 \cva(v_{\lambda_1})\\
&\qquad +  (\lambda_1-\lambda_2){big(\cva(v_{\lambda_2})-\cva(v_{\lambda_1})big)}.
\end{align*}
It implies that:
$$
\lambda_1 > \lambda_2 \Longrightarrow \cva(v_{\lambda_1}) \leq \cva(v_{\lambda_2}).
$$
Obviously, we also observe that $v_\lambda$ solves $\mathcal{P}_M$ for $M=\cva(v_\lambda)$, \textit{e.g.}: 
$$
v_{\lambda} =  \arg \min_{v \in Sm} \{ J(v)  : \cva(v) \leq \cva(v_{\lambda}) \}.$$
\end{proof}

\section{Proof of the theoretical results on the SMD}
\label{app:proofs_smd}
\subsection{Almost sure convergence of the algorithm}
Below, we will use some standard results that are valid for  any Bregman divergence $\DP$, whose statements are given below. We refer to \cite{Nemirovski:1983} for further details.

\begin{lemma}[Three points lemma]\label{lem:3points}
For any triple of points $(x,y,z)$, one has:
$$
\DP(x,z)=\DP(x,y)+\DP(y,z)-\langle \nabla \Phi(z)-\nabla \Phi(y),x-y\rangle.
$$
\end{lemma}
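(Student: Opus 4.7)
The plan is to prove this identity by direct algebraic manipulation from the definition of the Bregman divergence $\DP(a,b) = \Phi(a) - \Phi(b) - \langle \nabla \Phi(b), a-b \rangle$. There is no real obstacle here: the statement is a pure identity on the function $\Phi$ with no convexity or smoothness argument needed beyond the differentiability of $\Phi$ that is already assumed in the definition.

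Concretely, I would first expand the three divergences individually:
\begin{align*}
\DP(x,z) &= \Phi(x) - \Phi(z) - \langle \nabla \Phi(z), x-z\rangle,\\
\DP(x,y) &= \Phi(x) - \Phi(y) - \langle \nabla \Phi(y), x-y\rangle,\\
\DP(y,z) &= \Phi(y) - \Phi(z) - \langle \nabla \Phi(z), y-z\rangle.
\end{align*}
Then I would compute $\DP(x,y)+\DP(y,z)$: the $\Phi(y)$ terms cancel, leaving $\Phi(x)-\Phi(z)-\langle \nabla \Phi(y),x-y\rangle -\langle \nabla \Phi(z),y-z\rangle$. Subtracting $\DP(x,z)$ would make the $\Phi(x)-\Phi(z)$ terms cancel as well, leaving only
$$\DP(x,y)+\DP(y,z)-\DP(x,z) = \langle \nabla \Phi(z), x-z\rangle - \langle \nabla \Phi(y), x-y\rangle - \langle \nabla \Phi(z), y-z\rangle.$$
Finally, I would regroup the two terms involving $\nabla \Phi(z)$ using $(x-z)-(y-z)=x-y$, which collapses them into $\langle \nabla \Phi(z), x-y\rangle$, so that the right-hand side becomes $\langle \nabla \Phi(z)-\nabla \Phi(y), x-y\rangle$. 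Rearranging yields the claimed identity. The whole proof fits in a few lines and relies on nothing beyond the definition of $\DP$ and bilinearity of the inner product.
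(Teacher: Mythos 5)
Your computation is correct: expanding the three divergences from the definition, cancelling the $\Phi(y)$ and then the $\Phi(x)-\Phi(z)$ terms, and regrouping the two $\nabla\Phi(z)$ inner products via $(x-z)-(y-z)=x-y$ yields exactly the claimed identity. The paper does not supply a proof of this lemma (it cites Nemirovskij and Yudin and treats it as standard), and your direct algebraic verification is precisely the standard argument, so there is nothing to add.
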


\begin{lemma}[Gradient of the Bregman divergence]\label{lem:grad_dp}
For any pair of points $(x,y)$, one has:
$$
\nabla_x \DP(x,y) = \nabla \Phi(x)-\nabla \Phi(y).
$$
\end{lemma}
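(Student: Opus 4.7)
The statement is a direct computation from the definition of the Bregman divergence, and the plan is simply to unfold this definition and differentiate term-by-term. Recalling that
$$\DP(x,y) = \Phi(x) - \Phi(y) - \langle \nabla \Phi(y), x-y\rangle,$$
I would view the right-hand side as a function of $x$ alone, with $y$ held fixed. The first summand $\Phi(x)$ contributes $\nabla \Phi(x)$ to the gradient; the second summand $-\Phi(y)$ is a constant in $x$ and therefore contributes nothing; the third summand $-\langle \nabla \Phi(y), x-y\rangle$ is an affine function of $x$ whose gradient is $-\nabla \Phi(y)$. Adding the three pieces yields the claimed identity
$$\nabla_x \DP(x,y) = \nabla \Phi(x) - \nabla \Phi(y).$$

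The only substantive point to address is a regularity remark, which is really the ``hard'' part of the plan: the identity requires $\Phi$ to be differentiable at $x$. In the present setting $\Phi(u,\theta) = \sum_{i=1}^m u_i \log u_i + \theta^2/2$ is smooth on the relative interior of $\Db_m \times \rset$, which is where the mirror iterates actually live (the multiplicative entropy update in Algorithm~\ref{algo:SMD} keeps each coordinate of $U_k$ strictly positive, so differentiability is never an issue along the trajectory). With this in mind the lemma is essentially an unpacking of the definition, and it will be combined with Lemma~\ref{lem:3points} to rewrite $\DP(x^\star_\lambda,X_{k+1})$ in terms of $\DP(x^\star_\lambda,X_k)$ and the biased sub-gradient $\hg_{k+1}$, thereby setting up the Robbins–Siegmund type recursion used in the almost sure convergence analysis of the biased SMD.
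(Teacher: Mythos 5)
Your computation is correct and is exactly the standard argument: the paper does not actually prove this lemma (it states it as a classical fact and refers to \cite{Nemirovski:1983}), and differentiating the definition $\DP(x,y)=\Phi(x)-\Phi(y)-\langle\nabla\Phi(y),x-y\rangle$ in $x$ term by term is the intended one-line proof. Your added remark about differentiability of $\Phi$ on the relative interior of $\Db_m\times\rset$, where the iterates of Algorithm~\ref{algo:SMD} remain, is a sensible precision that the paper leaves implicit.
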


With the help of these lemmas, we derive the proof of the almost sure convergence result stated in Theorem \ref{theo:bias_SMD_independent}. The main issue generated by our biased simulation setting is that $\hg$ involved in Equations \eqref{eq:partial_u}, \eqref{eq:partial_theta} and \eqref{def:bias_sub_diff} acts non-linearly on $\un_{\langle \hat{Z}^{k+1},U_k \rangle \ge \theta_k}$ in our stochastic approximation term, which lead to a significant amount of theoretical difficulties.

\begin{proof}[Proof of Theorem \ref{theo:bias_SMD_independent}]
Let us recall that throughout the proof, the position of the algorithm is $X_k=(U_k,\theta_k)$ and that we denote by $x=(u,\theta)$ a generic point of the set $\mathcal{X}=\Db_m\times\mathbb{R}$.\\

\paragraph{Proof of $i)$.}
The proof is divided into three steps. We first write a biased descent inequality, and then control the size of the bias induced by our model before using  the Robbins Siegmund Theorem.\\

\noindent \underline{\textit{Step 1: Biased descent inequality.}} The main point here is to obtain a recursive inequality on $\DP(x^\star_{\lambda}, X_k)$, where $x^\star_{\lambda}$ is a point that minimizes $p_\lambda$.
Our starting point is the definition of $X_{k+1}$:
$$
X_{k+1} = \arg \min_{x \in \mathcal{X}} 
\bigg\{\langle \hg_{k+1},x-X_k\rangle +\frac{ \DP(x,X_k)}{\eta_{k+1} }\bigg\},
$$
where we recall that $\hg_{k+1}$ is the stochastic approximation of the sub-gradients: 
\begin{equation*}
\begin{cases}
\hg_{k+1,1}& = - \hZ^{k+1} + \frac{\lambda}{1-\alpha} \hZ^{k+1} \un_{\langle\hZ^{k+1}, U_k\rangle \ge \theta_k},\vspace{1em}\\ 
\hg_{k+1,2} &= 1- \frac{1}{1-\alpha}\un_{\langle\hZ^{k+1}, U_k\rangle \ge \theta_k}.
\end{cases}
\end{equation*}
The first order condition entails that:
$$
\forall x \in \mathcal{X} \qquad \eta_{k+1} \langle \hg_{k+1},x-x_{k+1}\rangle + \langle \nabla_x \DP(X_{k+1},X_k),x-X_{k+1}\rangle  \ge 0.
$$
Using Lemma \ref{lem:grad_dp} on the second term, we deduce that:
$$
\forall x \in \mathcal{X} \qquad   \langle \nabla \Phi(X_{k+1}) -\nabla \Phi(X_{k})  ,x-X_{k+1} \rangle  \ge \eta_{k+1} \langle  \hg_{k+1} , X_{k+1}-x \rangle.
$$
We then apply Lemma \ref{lem:3points} with $y=X_{k+1}$ and $z=X_{k}$
and obtain that:
\begin{align*}
\eta_{k+1} \langle  \hg_{k+1} , X_{k+1}-x \rangle &\leq  \langle \nabla \Phi(X_{k+1})-\nabla \Phi(X_{k}),x-X_{k+1}\rangle \\
& \le \DP(x,X_{k})-\DP(x,X_{k+1})-\DP(X_{k+1},X_{k})
\end{align*}
Using the strong convexity of $\DP$ stated in Inequality \eqref{eq:rho_convex}, we deduce that:
$$
\eta_{k+1} \langle  \hg_{k+1} , X_{k+1}-x \rangle  \leq \DP(x,X_k)-\DP(x,X_{k+1}) -(\theta_{k+1}-\theta_k)^2 - \frac{1}{2} \|U_{k+1}-U_k\|^2,
$$
which can be written as:
\begin{align}
 \DP(x,X_{k+1})  \leq& \DP(x,X_k)-(\theta_{k+1}-\theta_k)^2 - \frac{1}{2} \|U_{k+1}-U_k\|^2 -\eta_{k+1} \langle  \hg_{k+1} , X_{k+1}-x \rangle\nonumber \\
 \leq& \DP(x,X_k)-(\theta_{k+1}-\theta_k)^2 - \frac{1}{2} \|U_{k+1}-U_k\|^2 \nonumber\\
 &  -\eta_{k+1} \langle  \hg_{k+1} , X_{k}-x \rangle-\eta_{k+1} \langle  \hg_{k+1} , X_{k+1}-X_k \rangle \label{eq:descente}\end{align}
Let us first handle the last term of the right hand side:
$$\eta_{k+1}\langle  \hg_{k+1} , X_{k+1}-X_k \rangle =  \eta_{k+1} \hg_{k+1,2} (\theta_{k+1}-\theta_k) + \eta_{k+1}
\langle  \hg_{k+1,1} , U_{k+1}-U_k \rangle.$$
The Young inequality $|ab| \leq \frac{a^2}{2c}+\frac{c b^2}{2}$ leads to:
\begin{align}
 \big| \eta_{k+1} \hg_{k+1,2} (\theta_{k+1}-\theta_k) \big| &\leq 
\frac{\eta_{k+1}^2 \hg_{k+1,2}^2}{4}  + (\theta_{k+1}-\theta_k)^2 \nonumber\\
& \leq  C_\alpha \eta_{k+1}^2  + (\theta_{k+1}-\theta_k)^2.\label{eq:young_1}
\end{align}
where in the last line we use that
associated with $|\hg_{k+1,2}|\leq \max(1,\frac{\alpha}{1-\alpha})$ (see Equation \eqref{def:bias_sub_diff}).\\
In the same way, we also have:
\begin{equation}
\big| \eta_{k+1} \langle \hg_{k+1,1} , U_{k+1}-U_k \rangle \big| \leq \frac{ \eta_{k+1}^2 \|\hg_{k+1,1}\|^2}{2 } + \frac{1}{2} \|U_{k+1}-U_k\|^2. \label{eq:young_2}
\end{equation}


We use Equations \eqref{eq:young_1} and \eqref{eq:young_2} in Inequality \eqref{eq:descente} and obtain that:
\begin{align}\label{eq:descente_stochastique}
 \DP(x,X_{k+1}) & \leq \DP(x,X_k)+ \eta_{k+1}^2 \big( C_\alpha + \|\hg_{k+1,1}\|^2 \big)- \eta_{k+1} \langle  \hg_{k+1} , X_{k}-x \rangle.
\end{align}
We need to handle the biased drift that comes from the sampled random variables $Z_{k+1}$ involved in $\hg_{k+1}$. 
We write:
\begin{align*}
\hg_{k+1}&=
\nabla p_\lambda(X_k) +( \mathbb{E}[\hg_{k+1} \, \vert \, \mathcal{F}_k] -\nabla p_\lambda(X_k)) + (\hg_{k+1}-\mathbb{E}[\hg_{k+1} \, \vert \, \mathcal{F}_k] )\\
&=\nabla p_\lambda(X_k) - \mathfrak{b}_{k+1} +\Delta M_{k+1},
\end{align*} 
with $\Delta M_{k+1}:=\hg_{k+1}-\mathbb{E}[\hg_{k+1} \, \vert \, \mathcal{F}_k] $ is a martingale increment and $\mathfrak{b}_{k+1}$ stands for the bias:
\begin{align}
&\mathfrak{b}_{k+1}  =
 \nabla p_\lambda(U_k,\theta_k)-\mathbb{E}[\hg_{k+1} \, \vert \, \mathcal{F}_k] \nonumber \\
 &=
\begin{pmatrix} 
\mathbb{E}[\hZ^{k+1}\,\vert \, \mathcal{F}_k] -\mathbb{E}[Z]+ \frac{\lambda}{1-\alpha} \big(\mathbb{E}[
Z \un_{\langle Z, U_k\rangle \ge \theta_k} ]-\PE[\hZ^{k+1} \un_{\langle\hZ^{k+1}, U_k\rangle \ge \theta_k} \,\vert \, \mathcal{F}_k] \big)\\
\mathbb{P}(\langle Z, U_k\rangle \ge \theta_k)  -
{\mathbb{P}(\langle\hZ^{k+1}, U_k\rangle \ge \theta_k \vert \mathcal{F}_k)}
\end{pmatrix}.\label{def:h_k}
\end{align}
Now, choosing 
$x=x^\star_{\lambda}$ and using this decomposition into Equation \eqref{eq:descente_stochastique}, we obtain the main descent inequality, which will be the cornerstone of our analysis:
\begin{align}
\DP(x^\star_{\lambda},X_{k+1})  \leq & \DP(x^\star_{\lambda},X_k) - \eta_{k+1} \langle \nabla p_{\lambda}(X_k), X_k-x^\star_{\lambda} \rangle  - \eta_{k+1} \langle \mathfrak{b}_{k+1} ,X_k-x^\star_{\lambda}\rangle\nonumber\\
&- \eta_{k+1} \langle \Delta M_{k+1}, X_k-x^\star_{\lambda} \rangle + \eta_{k+1}^2 \Big( \frac{1}{4} + \|\hg_{k+1,1}\|^2 \Big). \label{eq:step1_general}
\end{align}
The Cauchy-Schwarz inequality entails:
\begin{equation}\label{eq:step1_cs}
\eta_{k+1} |\langle \mathfrak{b}_{k+1} , X_k-x^\star_{\lambda}\rangle| \leq \eta_{k+1} \|\mathfrak{b}_{k+1}\| \|X_k-x^\star_{\lambda}\|.
\end{equation}
Now,   the Young inequality and Equation \eqref{eq:rho_convex} implies that:
\begin{equation}\label{eq:step1_bete_mais_utile}
\|X_k-x^\star_{\lambda}\| \leq \frac{1+\|X_k-x^\star_{\lambda}\|^2}{2} \leq \frac{1}{2}+2 \DP(x^\star_{\lambda},X_{k}).
\end{equation}
We then plug Equations \eqref{eq:step1_bete_mais_utile} and \eqref{eq:step1_cs} into \eqref{eq:step1_general} and obtain that:
\begin{align}
\DP&(x^\star_{\lambda},X_{k+1})  \leq \DP(x^\star_{\lambda},X_{k}) \big(1+2 \eta_{k+1} \|\mathfrak{b}_{k+1}\|\big) - \eta_{k+1} \langle \nabla p_\lambda(X_k), X_k-x^\star_{\lambda}\rangle \nonumber\\
&- \eta_{k+1} \langle \Delta M_{k+1}, X_k-x^\star_{\lambda} \rangle + \eta_{k+1}^2 \Big(\frac{1}{4} +  \|\hg_{k+1,1}\|^2 \Big) + \frac{1}{2} \eta_{k+1}\|\mathfrak{b}_{k+1}\| . \label{eq:inegalite_step1}
\end{align}

\noindent \underline{\textit{Step 2: Bias upper bound.}} 
We are led to estimate the size of $\|\mathfrak{b}_{k+1}\|$.  For this purpose, we introduce the Kolmogorov distance $d_{Kol}$, defined on real distributions and defined by:
$$
d_{Kol}(\mu,\nu) = \sup_{a \in \mathbb{R}} \big|\mu(]-\infty,a])-\nu(]-\infty,a])\big|.
$$
The Kolmogorov and Wasserstein distance for real distributions (see \textit{e.g.} Theorem 3.1 of \cite{Shao}) are related as follows:
$$
d_{Kol}(\mu,\nu) \leq 2 \sqrt{\mathcal{W}_1(\mu,\nu)}.
$$

Using the expression of $\mathfrak{b}_{k+1}=(\mathfrak{b}_{k+1, 1},\mathfrak{b}_{k+1,2})$ in \eqref{def:h_k}, its second coordinates verifies :
\begin{align*}
{\mathbb{E}}[\|\mathfrak{b}_{k+1,2}\| ]&\le d_{Kol}\big(\mathcal{L}(\langle\hZ^{k+1}, {U_k}\rangle,\mathcal{L}(\langle Z,{U_k}\rangle )\big)\\
& \leq 2 \sqrt{\mathcal{W}_1\big(\mathcal{L}(\langle\hZ^{k+1}, {U_k}\rangle),\mathcal{L}(\langle Z,{U_k}\rangle ) \big)}.
\end{align*}
Using now the variational characterization of the $\mathcal{W}_1$ distance on $1$-Lipschitz function, and using that for any $k$, $U_k \in \Db_m$, we have:
$\|U_k\|^2 \leq \|U_k\|_1^2 = 1$, we then deduce that:
$$
\sup_{u \in \Db_m} \mathcal{W}_1\big(\mathcal{L}(\langle\hZ^{k+1}, {U_k}\rangle),\mathcal{L}(\langle Z,{U_k}\rangle ) \big) \leq 
\mathcal{W}_1(\mathcal{L}(\hZ^{k+1}),\mathcal{L}(Z)).
$$
We then obtain from Assumption \ref{hyp:biais} Equation \eqref{def:bias_simulation1} that:
$$
{\mathbb{E}}[\|\mathfrak{b}_{k+1,2}\|] \leq 2 \sqrt{\delta_{k+1}}.
$$
We are led to study an upper bound of $\|\mathfrak{b}_{k+1,1}\|$. We observe that:
\begin{align*}
{\mathbb{E}}[\|\mathfrak{b}_{k+1,1}\|] & \leq \| \mathbb{E}[\hZ^{k+1}-Z]\| +\frac{\lambda}{1-\alpha} \big\| 
\mathbb{E}[\hZ^{k+1}\un_{\langle\hZ^{k+1},{U_k} \rangle\ge \theta_k}-Z
\un_{\langle Z,{U_k}\rangle \ge \theta_k}]
\big\| \\
& \leq \delta_{k+1} +\frac{\lambda}{1-\alpha} \big\| 
\mathbb{E}[\hZ^{k+1}\un_{\langle\hZ^{k+1},{U_k}\rangle \ge \theta_k}-Z
\un_{\langle Z,{U_k}\rangle \ge \theta_k}]
\big\|.
\end{align*}
We simply observe that in this case, the second term of the right hand side of the previous inequality is upper bounded by $\upsilon_{k+1}$ thanks to Assumption \ref{hyp:biais} Equation \eqref{def:bias_simulation2}. We deduce that: 
\begin{equation}\label{eq:borne_h_k}
\PE[\|\mathfrak{b}_{k+1}\| ]\le{\mathbb{E}} [\|\mathfrak{b}_{k+1,1}\|]+{\mathbb{E}}[\|\mathfrak{b}_{k+1,2}\|] \leq 2 \sqrt{\delta_{k+1}} + \delta_{k+1} + \frac{\lambda}{1-\alpha} \upsilon_{k+1}.
\end{equation}

\noindent
\underline{\textit{Step 3: Conclusion of the proof.}}
We apply the Robbins-Siegmund Lemma to Equation \eqref{eq:inegalite_step1}: we observe that the compactness of $\Db_m$ and the existence of a second order moment for $Z$ leads to the existence of $C>0$ such that:
\begin{equation}\label{second:moment:gk}
\mathbb{E}[\|\hg_{k+1}\|^2] \leq C < + \infty.
\end{equation}

 Finally, from Equation \eqref{eq:borne_h_k} and our assumptions on $(\delta_{k+1})_{k \ge 0}$ and $(\upsilon_{k+1})_{k \ge 0}$, one has:
\begin{equation}
\label{eq:conv_serie1}
 \sum_{k \ge 0} \eta_{k+1} \mathbb{E}[\|\mathfrak{b}_{k+1}\|] < + \infty.
\end{equation}
From Equation \eqref{eq:conv_serie1},we deduce the existence of a second order moment and since $
 \eta_{k+1} \langle \Delta M_{k+1}, X_k-x^\star_{\lambda} \rangle$ is a $\mathcal{F}_{k}$ Martingale-increment, the Robbins-Siegmund Lemma (see \textit{e.g.} \cite{RobbinsSiegmund1971}) implies that:
$$\DP(x^\star_{\lambda},X_k) \underset{k\to\infty}{\overset{a.s}{\longrightarrow}}D_{\infty} \in L^1 \quad \text{and} \quad \sum_{k \ge 0} \eta_{k+1} \langle \nabla  p_\lambda(X_k),X_k-x^\star_{\lambda}\rangle< + \infty \quad  a.s.$$
This ends the proof of $i)$. \hfill $\diamond$

\paragraph{Proof of $ii)$} This result is almost straightforward starting from $i)$. We verify from $\DP(x^\star_{\lambda},X_k) \underset{k\to\infty}{\overset{a.s}{\longrightarrow}}D_{\infty} \in L^1$ that $(X_k)_{k \ge 1}$ is an almost surely bounded sequence. Hence, almost surely the Cesaro average sequence $(\bar{X}^\eta_k)_{k \ge 0}$ is a Cauchy sequence and therefore converges towards $\bar{X}^\eta_{\infty}$.
We then use the convexity of $p_\lambda$ and observe that
\begin{align*}
p_{\lambda}(\bar{X}^\eta_k) - \min(p_{\lambda})& \leq \sum_{i=0}^k \bigg(\frac{\eta_i}{\displaystyle\sum_{j=0}^k \eta_j}\bigg) p_{\lambda}(X_k) - \min(p_\lambda)\\
& = \frac{\displaystyle\sum_{i=0}^k \eta_i (p_\lambda(X_k)-\min(p_\lambda))}{\displaystyle\sum_{j=0}^k \eta_j} \longrightarrow 0 \quad a.s.
\end{align*}
The continuity of $p_\lambda$ then implies that almost surely:
$$
\bar{X}^\eta_k \longrightarrow \bar{X}^\eta_{\infty} \quad \text{and} \quad 
p_\lambda(\bar{X}^\eta_{\infty}) = \min(p_{\lambda}). 
$$
$\hfill \diamond$

\paragraph{Proof of $iii)$} { The proof of this last point is more intricate and needs the introduction of more sophisticated tools of dynamical systems to obtain the convergence of the \textit{overall} sequence $(X_k)_{k \ge 0}$ towards a point of $\arg\min (p_{\lambda})$. }

\noindent \underline{Step 1: continuous time trajectory.}
We follow the roadmap of \cite{benaim_LN} that introduces the asymptotic pseudo-trajectories for stochastic algorithms, and \cite{Mertikopoulos} that adapts these tools to the mirror descent.\\
We introduce the mirror map $Q$ and the Fenchel conjugate $\Phi^*$ defined on $\in \Db_m \times \rset$ by:
\begin{equation*}
\label{def:mirror_map}
Q(y) = \displaystyle\arg\max_{x \in \Db_m \times \rset} \langle y,x\rangle - \Phi(x) \qquad \text{and} \qquad \Phi^*(y) = \max_{x \in \Db_m \times \rset} \langle y,x\rangle - \Phi(x) .
\end{equation*}
Note that the Fenchel coupling verifies that for any $(x,y)\in \{\Db_m \times \rset \}^2$:
\begin{equation}
\label{eq:Fenchel_Dphi}
\Phi(x)+\Phi^*(y)-\langle y,x \rangle = D_{\Phi}(x,Q(y)).
\end{equation}
The Mirror Descent ordinary differential equation is then  defined as:
\begin{equation}\label{eq:md_ode}
\begin{cases}
\dot{y} &= - \nabla p_{\lambda}(x)\\
x&=Q(y)
\end{cases}.
\end{equation}
From Equation \eqref{eq:rho_convex}, we know that $\Phi$ is $1/2$ strongly convex, which implies first that $Q$ is a $2$-Lipschitz continuous function (see Proposition 2.2 of \cite{Mertikopoulos}). Second, $-\nabla p_{\lambda}$ is a continuous bounded function, which implies with the Cauchy-Arzela-Peano theorem that any solution of the Cauchy problem always exists.\footnote{It is not clear that $\nabla p_{\lambda}$ is a Lipschitz continuous function, which prevents from the direct use of Theorem 2.4 of \cite{Mertikopoulos}. Indeed a such Lipschitz property   depends on the distribution of the random variable $Z$. Nevertheless, the nature of the gradient dynamics prevents the explosion of the solutions, which compactifies the trajectories and permits to by-pass the uncertainty on this Lipschitz property.}
We consider any $x^\star$ any minimizer of $p_{\lambda}$ and $(x_t,y_t)$ a solution of \eqref{eq:md_ode}. Following the arguments of Nemirovski and Yudin \cite{Nemirovski:1983}, we define the Fenchel coupling as:
$$
V_{x^\star}(t) = \Phi(x^\star)+\Phi^*(y_t)-\langle y_t,x^\star\rangle.
$$
Note that using \eqref{eq:Fenchel_Dphi}, $V_{x^\star}(t)=D_\phi(x^\star, y_t)$.
The Fenchel inequality yields $V_{x^\star}(t) \ge 0$ for any time $t$.
We observe that :
$$
V_{x^\star}'(t) = - \langle \nabla p_\lambda(x_t),x_t-x^\star\rangle \leq p_{\lambda}(x^\star)-p_{\lambda}(x_t)\le0
$$
such that $V_{x^\star}$ is non-increasing with time.
In turn, it implies the compactness of any trajectory (and the convergence with a $O(t^{-1})$ of the value of $p_{\lambda}(\bar{x}_t)$ towards $\min(f)$ where $\bar{x}_t$ is the Cesaro average of the trajectory).
From any trajectory $(x_t)_{t \ge 0}$, we consider $x_\infty$ any adherence point. If $x_\infty$ does not belong to $\arg \min(p_{\lambda})$, then we can find a compact neighbourhood $\mathcal{V}_1$ of $x_{\infty}$ and an increasing sequence $(t_k)_{k \ge 0}$ such that $x_{t_k} \in \mathcal{V}_1$. Using the continuity of $p_{\lambda}$ we can find a second set $\mathcal{V}_2$ and a radius $r>0$ such that
$$
\forall x \in \mathcal{V}_1: \quad B(x,r) \subset \mathcal{V}_2,
$$
and for which we have:
$$
\forall x \in \mathcal{V}_2: \qquad - \langle \nabla p_{\lambda}(x),x-x^\star \rangle \leq  (\min(p_{\lambda})-p_\lambda(x)) \leq  -a <0.
$$
We use that $Q$ is $2$-Lipschitz to obtain that:
\begin{align*}
 \forall s >0: \qquad \|x_{t_k+s}-x_{t_k}\| & = \|Q(y_{t_k+s})-Q(y_{t_k})\|
\\
& \leq 2 \| y_{t_k+s}-y_{t_k}\|\\
& \leq 2 \int_{t_k}^{t_k+s} \| \dot{y}_u \| \text{d}u \\
& \leq 2 s \sup_{  x \in \Db_m \times \rset } \|\nabla p_{\lambda}(x)\|\\
& \leq 2 s  (\alpha+\mathbb{E}[\|Z\|])\big(1+\frac{\lambda}{1-\alpha}\big),
\end{align*} 
where the last line comes from Equations \eqref{eq:partial_u} and \eqref{eq:partial_theta}. We can deduce that:
$$
\forall s \leq s_r := \frac{r}{2 (\alpha+\mathbb{E}[\|Z\|])(1+\frac{\lambda}{1-\alpha})}: \qquad x_{t_k+s} \in B(x_{t_k},r) \subset \mathcal{V}_2.
$$
Finally, we write that:
$$
V_{x^\star}(t_k+s_r) - V_{x^\star}(0) \leq \sum_{j=0}^k \int_{t_j}^{t_j+s_r} V_{x^\star}'(u) \text{d}u \leq - k \times (a  s_r)   \longrightarrow - \infty \quad \text{as} \quad k \longrightarrow + \infty. 
$$
This contradicts the compactness of the trajectories and leads to a contradiction. We deduce that $x_\infty \in \arg \min(p_{\lambda})$ and that any adherence point of $(x_t)_{t \ge 0}$ belongs to $\arg \min(p_{\lambda})$.

Finally, we prove that the whole trajectory converges towards $x_{\infty}$. The construction of $V_{x^\star}$ holds for any $x^\star$ in $\arg \min \{ p_{\lambda}\}$, in particular for $x^\star=x_{\infty}$. For this particular choice, we verify that:
$$
\forall t \ge t_k \qquad 0 \leq V_{x_{\infty}}(t) \leq V_{x_{\infty}}(t_k),
$$
because $V$ is non-increasing and larger than $0$. \\
Since  by construction $x_{t_k}$ converges to $x^\star$, then $D_{\Phi}(x_\infty,x_{t_k})=V_{x^\star}(t_k) \longrightarrow 0$ as $k \longrightarrow + \infty$, which in turn implies that $V_{x_{\infty}}(t) \longrightarrow 0$ as $t \longrightarrow + \infty$. We then conclude that the whole trajectory converges towards $x_{\infty}$.

\noindent \underline{Step 2: stochastic algorithm convergence.}
To derive the asymptotic behaviour of the SMD, we follow the argument of Proposition 4.1 in \cite{benaim_LN} . We consider $(X_k,Y_k)$ that satisfies the stochastic recursion
$$
\begin{cases}
Y_{k+1} &= Y_k - \eta_{k+1} \hat{g}_{k+1}\\
X_{k+1} &=Q(Y_{k+1})
\end{cases}
$$
We then define the continuous time affine interpolation of $(X_k,Y_k)_{k \ge 1}$ using the natural time-scale $\tau_{0}=0$ and $\tau_{k+1}=\tau_k+\eta_{k+1}$ and introduce $(\hat{X}_t,\hat{Y}_t)_{t \ge 0}$ the stochastic process defined as
$$
\forall t \in [\tau_{k},\tau_{k+1}] \qquad (\hat{X}_t,\hat{Y}_t) =
(X_k,Y_k) +  \frac{t-\tau_k}{\tau_{k+1}-\tau_k}  \Big((X_{k+1},Y_{k+1})-(X_k,Y_k)\Big).
$$
We still use the decomposition derived from \eqref{def:h_k}:
$$
\hat{g}_{k+1} = \nabla p_{\lambda}(X_k) + \Delta M_{k+1} + \mathfrak{b}_{k+1},
$$
where the martingale increment $\Delta M_{k+1}$ satisfies (see \eqref{second:moment:gk}):
$$
\sup_{k \ge 0} \mathbb{E}[\|\Delta M_{k+1}\|^2] < + \infty,
$$
and the bias term satisfies under Equation \eqref{eq:borne_h_k} and
assumption of $iii)$ that:
$$
\sum_{k \ge 0} \mathbb{E}[ \|\mathfrak{b}_{k+1}\|] \leq 
\sum_{k \ge 0} \Big(2 \sqrt{\delta_{k+1}}+\delta_{k+1}+\frac{\lambda}{1-\alpha}\upsilon_{k+1}\Big) \leq   + \infty.
$$
Then, the Markov inequality and the Borel-Cantelli lemma yields
$$
\mathfrak{b}_k \longrightarrow 0 \quad \text{a.s.} \quad \text{as} \quad k \longrightarrow + \infty.
$$
We then use Remark 4.5 and Proposition 4.2 of \cite{benaim_LN} and deduce that Assumption A1 of Proposition 4.1 \cite{benaim_LN} holds. In the meantime, Assumption A2 of Proposition 4.1 \cite{benaim_LN} is valid from the proof of $i)$ and the convergence of $D_\Phi(x_\lambda^\star,X_k)$. Since $Q$ is a $2$-Lipschitz function, we then deduce that $(\hat{X}_t,\hat{Y}_t)_{t \ge 0}$ is an aymptotic pseudo-trajectory of the deterministic mirror descent o.d.e. stated in \eqref{eq:md_ode}. In particular, $(X_t)_{t \ge 0}$ converges using Step 1 of $iii)$, which concludes the proof of $iii)$. \hfill  
\end{proof}

\subsection{Proof of the non-asymptotic behaviour (Theorem \ref{theo:bias_SMD_finite}).\label{app:smd_finite}}

\paragraph{Step 1: control of $\PE[\DP(x^\star_{\lambda},X_{k})]$.}
For the sake of convenience, we denote by:
 $$\PE[\DP(x^\star_{\lambda},X_{k})] = \mathbf{D}_\Phi^k.$$
Taking the expectation in \eqref{eq:inegalite_step1} and using $\langle \nabla p_\lambda(X_k),X_k-x^\star_{\lambda}\rangle >0$, we get:
\begin{align*}
\mathbf{D}_\Phi^{k+1}  &\leq \mathbf{D}_\Phi^{k}  +2 \eta_{k+1} \PE\big[\DP(x^\star_{\lambda},X_{k})\|\mathfrak{b}_{k+1}\|\big] \\
&\qquad+ \eta_{k+1}^2 \Big( \frac{1}{4} +  \PE[ \|\hg_{k+1,1}\|^2 ]\Big) + \frac{1}{2} \eta_{k+1}\PE[\|\mathfrak{b}_{k+1}\|]\\
&\leq \mathbf{D}_\Phi^{k} \Big(1+2 \eta_{k+1} \big(2 \sqrt{\delta_{k+1}} + \delta_{k+1} + \frac{\lambda \upsilon_{k+1}}{1-\alpha} \big)\Big) \\
&\qquad + \eta_{k+1}^2 \Big( \frac{1}{4} +  \PE [\|\hg_{k+1,1}\|^2] \Big) + \frac{1}{2} \eta_{k+1}\big(2 \sqrt{\delta_{k+1}} + \delta_{k+1} + \frac{\lambda \upsilon_{k+1}}{1-\alpha}\big),
\end{align*}
where we used \eqref{eq:borne_h_k} and a conditional expectation argument. The inequality can be written as:
$$
\mathbf{D}_\Phi^{k+1} \le \mathbf{D}_\Phi^k (1+ a_{k+1}) +b_{k+1},
$$
using that $\PE \|\hg_{k+1,1}\|^2\le M$, a $C>0$ exists such that:
\begin{equation}
\label{def:ab}
\begin{cases}
&a_{k+1}=2 \eta_{k+1} \big(2 \sqrt{\delta_{k+1}} + \delta_{k+1} + \frac{\lambda \upsilon_{k+1}}{1-\alpha}\big) \\
& b_{k+1}=  C\Big(\eta_{k+1}^2 + \eta_{k+1}\big(2 \sqrt{\delta_{k+1}} + \delta_{k+1} + \frac{\lambda \upsilon_{k+1}}{1-\alpha}\big) \Big)\end{cases}
\end{equation}

Now, denote:
$$
\mathcal{V}_k=\frac{\mathbf{D}_\Phi^k}{\prod_{i=1}^{k} (1+a_i )} - \sum_{j=0}^{k} \frac{b_j}{\prod_{i=1}^j (1+a_i)},
$$
 we verify that:
\begin{align*}
\mathcal{V}_{k+1}&=\frac{\mathbf{D}_\Phi^{k+1}}{\prod_{i=1}^{k+1} (1+a_i )} - \sum_{j=1}^{k+1} \frac{b_j}{\prod_{i=1}^j (1+a_i)} \\
&\le \frac{ \mathbf{D}_\Phi^k (1+ a_{k+1}) +b_{k+1}}{\prod_{i=1}^{k+1} (1+a_i )} - \sum_{j=1}^{k+1} \frac{b_j}{\prod_{i=1}^j (1+a_i)}\\
&\le  \frac{ \mathbf{D}_\Phi^k}{\prod_{i=1}^{k} (1+a_i )} + \frac{b_{k+1}}{\prod_{i=1}^{k+1} (1+a_i )}- \sum_{j=1}^{k+1} \frac{b_j}{\prod_{i=1}^j (1+a_i)}\\
&\le \frac{ \mathbf{D}_\Phi^k}{\prod_{i=1}^{k} (1+a_i )} - \sum_{j=1}^{k} \frac{b_j}{\prod_{i=1}^j (1+a_i)} = \mathcal{V}_{k}.
\end{align*}
From this inequality, we deduce that:
$$
\frac{\mathbf{D}_\Phi^k}{\prod_{i=1}^{k} (1+a_i )} - \sum_{j=1}^{k} \frac{b_j}{\prod_{i=1}^j (1+a_i)} \le  \mathcal{V}_{0}=\mathbf{D}_\Phi^0.
$$
Let us remark that:
\begin{align}
\mathbf{D}_{\Phi}^0 = D_{\Phi}(X_0, x^\star_\lambda) & =  \frac{(\theta_0-\theta^\star_\lambda)^2}{2} + D_{\varphi}(U_0,u^\star_\lambda)\nonumber \\
& =  \frac{(\theta_0-\va(u^\star_\lambda))^2}{2} + D_{\varphi}(U_0,u^\star_\lambda)\nonumber\\
& \leq \frac{(\theta_0-\va(u^\star_\lambda))^2}{2} + \log m  := \{\Delta_\Phi^0\}^2, \label{borne:DPhi_0}
\end{align}
where we used the complementary slackness condition (see Equation \eqref{eq:complementary} in the Appendix \ref{sec:appendix_lagrange})  and the upper bound on the entropic Bregman divergence used in \cite{Lan:2012}.
We finally deduce that:
\begin{equation}\label{eq:borne_Dphi_k}
\mathbf{D}_\Phi^k \le \mathbf{D}_\Phi^0\prod_{i=1}^{k} (1+a_i ) +  \Big(\sum_{j=1}^{k} \frac{b_j}{\prod_{i=1}^j (1+a_i)} \Big)\prod_{i=1}^{k} (1+a_i ).
\end{equation}
\paragraph{Step 2: non-asymptotic control of the position of the SMD.}
Going back again to \eqref{eq:inegalite_step1}, we can now use the convergence of $\mathcal{D}_\Phi(x^\star_\lambda, X_k)\to 0$ to get:
\begin{align*}
\eta_{k+1} \langle \nabla p(X_k), X_k-x_\star \rangle  &\leq \DP(x_\star,X_{k})\Big(1+2 \eta_{k+1} \|\mathfrak{b}_{k+1}\|\Big)-\DP(x_\star,X_{k+1})   \nonumber\\
&\qquad+ \eta_{k+1} \langle \mathbb{E}[\hg_{k+1} \, \vert \mathcal{F}_k]-\hg_{k+1}, X_k-x_\star \rangle\\
 &\qquad+ \eta_{k+1}^2 \big( \frac{1}{4} +  \|\hg_{k+1,1}\|^2 \big)
 + \frac{1}{2} \eta_{k+1}\|\mathfrak{b}_{k+1}\|.
\end{align*}
From the convexity of $p_\lambda$, we deduce that  $ \langle \nabla p_\lambda(X_k), X_k-x^\star_{\lambda}\rangle \ge p_\lambda(X_k) - p_\lambda(x^\star_{\lambda})$, and taking the expectation in both sides, we deduce that:
$$
\eta_{k+1} \big(\PE [p_\lambda(X_k)] - p_\lambda(x^\star_\lambda)  \big)\le \Big(\mathbf{D}_\Phi^k - \mathbf{D}_\Phi^{k+1} \Big) +a_{k+1}\mathbf{D}_\Phi^k + b_{k+1}.
$$
Summing these inequalities from $k=0$ to $n$, we get:
\begin{align*}
\sum_{k=0}^{n}\eta_{k+1} \big(\PE [p_\lambda(X_k) ]-p_\lambda(x^\star_\lambda) \big) 
&\le \sum_{k=0}^{n}\Big(\mathbf{D}_\Phi^k - \mathbf{D}_\Phi^{k+1} \Big) + \sum_{k=0}^{n} a_{k+1} \mathbf{D}_\Phi^{k}+ b_{k+1}\\
&\le \Big(\mathbf{D}_\Phi^0 - \mathbf{D}_\Phi^{n+1} \Big) + \sum_{k=0}^{n} a_{k+1} \mathbf{D}_\Phi^{k}+ b_{k+1}\\
&\le \mathbf{D}_\Phi^0 + \sum_{k=0}^{n} a_{k+1} \mathbf{D}_\Phi^{k}+ b_{k+1}.
\end{align*}
Following the convexity argument of $ii)$ Theorem \ref{theo:bias_SMD_independent} and the definition of $\bar{X}_n^{\eta}$ in\eqref{eq:def_cesaro_average}, we deduce that:
\begin{align}
\PE [p_\lambda (\bar{X}^\eta_n)] - p_\lambda(x^\star_\lambda) &\le \sum_{k=0}^{n}\frac{\eta_{k+1}}{\sum_{j=0}^{n}\eta_{j+1} } \Big(\PE [p_\lambda(X_k) ]-p_\lambda(x^\star_\lambda) \Big)\nonumber\\
 &\le \frac{1}{\sum_{j=0}^{n}\eta_{j+1} }\bigg(\mathbf{D}_\Phi^0+ \sum_{k=0}^{n} \Big(a_{k+1} \mathbf{D}_\Phi^k + b_{k+1}\Big)\bigg), \label{eq:maj_plambda_k}
 \end{align}
which ends the proof. \hfill$\square$

\begin{proof}
[Proof of Corollary \ref{cor:finite_horizon}]
We now consider a fixed step sequence for the algorithm and the discretization stopped at a horizon $n$. 
More precisely, we assume  $\eta_{k+1}=\eta>0$ for $0\le k\le n$ and that \eqref{eq:borne_h_k} translates:
$$\mathbb{E}[\|\mathfrak{b}_{k+1}\|] \le \omega, \quad \forall\, 0\le k\le n.$$
We aim at choosing constant values for step sequences adapted to the finite horizon $n$. 

Within this framework the step sequences $(a_{k+1})$ and $(b_{k+1})$ defined in \eqref{def:ab} become
$$a_{k+1}=2\eta\omega\quad \text{and}\quad b_{k+1}= C(\eta^2+\eta\omega).$$
Let us now see how both $\eta$ and $\omega$ should depend on $n$.
With these choices, \eqref{eq:borne_Dphi_k} becomes:
$$\mathbf{D}_\Phi^k\le \mathbf{D}_\Phi^0 (1+2\eta\omega)^k +C(\eta^2+\eta\omega)  \frac{(1+2\eta\omega)^{k}-1}{2\eta\omega}.$$

Equation \eqref{eq:maj_plambda_k} then yields :
\begin{align}
\PE[ p_\lambda (\bar{X}^\eta_n)] - p_\lambda(x^\star_\lambda)&\le \frac{1}{(n+1)\eta }\Big( \mathbf{D}_\Phi^0+ \sum_{k=0}^{n}\big( 2\eta\omega \mathbf{D}_\Phi^k + C\eta(\eta+\omega)\big)\Big) \nonumber\\
&\le  \frac{1}{(n+1)\eta }\bigg( \mathbf{D}_\Phi^0\Big(1+ 2\eta\omega \sum_{k=0}^{n}(1+2\eta\omega)^k\Big)\nonumber\\
&\qquad+ C\eta(\eta+\omega)\Big( n+1+\sum_{k=0}^{n} \big((1+2\eta\omega)^{k+1}-1\big) \Big)\bigg) \nonumber\\
&\le  \frac{\mathbf{D}_\Phi^0 }{(n+1)\eta}\Big( 1+2\eta\omega\frac{(1+2\eta\omega)^{n+1}-1}{2\eta\omega}\Big)\nonumber\\
&\qquad+ \frac{C \eta (\eta+\omega)}{ \eta  (n+1)} {\Big(\frac{(1+2\eta\omega)^{n+1}-1}{2\eta\omega}+n+1\Big)}\nonumber\\
 &\leq  \frac{\mathbf{D}_\Phi^0  e^{2 \eta \omega (n+1)}}{(n+1)\eta}+C (\eta+\omega)\Big( \frac{e^{2 \eta \omega (n+1)}}{ 2\eta \omega (n+1)} +1\Big) \label{eq:up_general}
\end{align}
where we used in the last inequality that $1+x\le e^x$. We  then  choose $(\eta,\omega)$ such that:
 $$\eta = \frac{\sqrt{\mathbf{D}_\Phi^0}}{2\sqrt{n+1}} \quad \text{and} \quad \omega =\frac{1}{\sqrt{n+1} \sqrt{\mathbf{D}_\Phi^0}}$$
 and obtain, using the fact that $\eta \omega (n+1) \leq 1/2$, that a large enough $C$ exists such that
 $$
\PE [p_\lambda(\hat{X}^\eta_n)] - p_\lambda(x^\star_\lambda) 
\le C \frac{\sqrt{\mathbf{D}_\Phi^0}}{\sqrt{n+1}} .
$$
To conclude, we plug-in the upper bound  \eqref{borne:DPhi_0} for $\sqrt{\mathbf{D}_\Phi^0}$ in the previous equations.

\noindent
Starting from \eqref{eq:up_general}, we also observe that the choice $\eta=\omega=1/\sqrt{n+1}$ yields:
$$
\PE [p_\lambda(\hat{X}^\eta_n)] - p_\lambda(x^\star_\lambda) 
\le C \frac{\mathbf{D}_\Phi^0}{\sqrt{n+1}} .
$$
\end{proof}


\section{Proofs associated with the simulation of the portfolio}
\label{app:CIR}

The main goal of this section is to prove Proposition \ref{prop:wasserstein} and Proposition \ref{prop:biais}. To this aim, we first need technical properties on the drift-implicit Euler Scheme coupled with the Riemann integral approximation and on the density of the portfolio $Z$.

\subsection{Properties of the CIR process and its discretization}
\paragraph{Conditional moments of the CIR process}
We recall some useful properties satisfied by the CIR process and on some important objects that are related to the CIR introduced in Equation \eqref{eq:def_CIR}.
\begin{proposition}\label{prop:CIR_technique}
\begin{itemize} $(r_t)_{t \ge 0}$ is a Markov process and satisfies: 
\item[i)] If $2 a b > \sigma_0^2$, then $(r_t)_{t \ge 0}$ remains strictly positive almost surely.
\item[ii)] The (conditional) expectation and variance are given by:
\begin{equation}
\label{eq:ito1_CIR}
 \PE[r_{t+s} \vert  r_s] = r_s e^{-a t} + b(1-e^{- at}), 
 \end{equation}
 and 
 \begin{equation}
\label{eq:ito2_CIR}
 \mathbb{V}(r_{t+s} \vert r_s) = r_s \frac{\sigma_0^2}{a} (e^{- a t} - e^{- 2 a t}) + \frac{b \sigma_0^2}{2 a} (1-e^{- a t })^2. 
\end{equation}
\item[iii)] The CIR possesses an exponential integrability:
$$
\forall \lambda < \frac{a^2}{2 \sigma_0^2} \quad \forall t \ge 0: \qquad
\PE\Big[\exp\big( \lambda \int_0^t r_s ds \big)\Big]  \leq C(\lambda,t) < + \infty.
$$
\end{itemize}
\end{proposition}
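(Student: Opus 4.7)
The Markov property is inherited from strong pathwise uniqueness of \eqref{eq:def_CIR}, which holds by Yamada--Watanabe since $\sigma_0\sqrt{\cdot}$ is $1/2$-Hölder continuous. I will then treat (i), (ii), (iii) in order. For item (i), I would invoke Feller's boundary test applied to the generator $\mathcal{L}f = a(b-r)f'(r) + \tfrac{1}{2}\sigma_0^2 r f''(r)$. Computing the scale density $s(r) = r^{-2ab/\sigma_0^2}\exp(2ar/\sigma_0^2)$ and the speed measure $m(r)\,dr = (2/\sigma_0^2) r^{2ab/\sigma_0^2 - 1}\exp(-2ar/\sigma_0^2)\,dr$ on $(0,\infty)$, one checks that under $2ab > \sigma_0^2$ the integrals classifying $0$ as attainable diverge, so $0$ is an unattainable entrance boundary and the process stays strictly positive a.s.

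For item (ii), I would apply Itô's formula. Taking the conditional expectation in \eqref{eq:def_CIR} (after a standard localization, then passing to the limit using the fact that $E[r_t^2]<\infty$ follows from the linear growth and Gronwall) yields that $m(t) := E[r_{s+t}\mid r_s]$ satisfies the linear ODE $m'(t) = a(b-m(t))$ with $m(0)=r_s$, which integrates to \eqref{eq:ito1_CIR}. Itô applied to $r_t^2$ gives
\begin{equation*}
d(r_t^2) = \bigl(2ab\,r_t - 2a\,r_t^2 + \sigma_0^2 r_t\bigr)\,dt + 2\sigma_0 r_t^{3/2}\,dB_0(t),
\end{equation*}
so $M_2(t) := E[r_{s+t}^2\mid r_s]$ satisfies a first-order linear ODE with $M_2(t)$-coefficient $-2a$ and forcing term $(2ab + \sigma_0^2)m(t)$, which is solved explicitly. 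Subtracting the square of \eqref{eq:ito1_CIR} then reproduces \eqref{eq:ito2_CIR}.

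The main work is item (iii). The plan is to exploit the affine structure of the CIR and use a Feynman--Kac representation. I seek a candidate of the form $u(t,r) = \exp(\phi(t) + \psi(t) r)$ for the Kolmogorov backward equation
\begin{equation*}
\partial_t u = a(b-r)\partial_r u + \tfrac{\sigma_0^2}{2} r\,\partial_{rr} u + \lambda r\, u, \qquad u(0,r)=1.
\end{equation*}
Matching coefficients in $r$ reduces this to the Riccati system
\begin{equation*}
\psi'(t) = \lambda - a\psi(t) + \tfrac{\sigma_0^2}{2}\psi(t)^2,\qquad \phi'(t) = a b\,\psi(t),\qquad \psi(0)=\phi(0)=0.
\end{equation*}
The discriminant of the quadratic on the right is $a^2 - 2\sigma_0^2\lambda$, which is strictly positive precisely under the hypothesis $\lambda < a^2/(2\sigma_0^2)$. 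In that regime the quadratic has two real roots $\psi_-<0<\psi_+$, the interval $[\psi_-,\psi_+]$ is forward-invariant for the Riccati flow, and hence $\psi$ is globally defined and bounded on $[0,t]$, making $\phi$ finite as well.

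The remaining step, and the principal obstacle, is to identify this locally bounded candidate $u$ with the actual conditional expectation $E[\exp(\lambda\int_0^t r_s\,ds)\mid r_0=r]$. I would run the usual verification argument: apply Itô to $N_s := u(t-s,r_s)\exp(\lambda \int_0^s r_u\,du)$ to see that the finite-variation part vanishes by construction, so $N_s$ is a local martingale; then localize along $\tau_n = \inf\{s : r_s \ge n\}$ and pass to the limit by monotone convergence, using the positivity of $r$ and the uniform bound on $\psi$ to secure uniform integrability. Taking $s=t$ yields the explicit formula $C(\lambda,t) = \exp(\phi(t)+\psi(t)r_0)<+\infty$. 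The threshold is sharp: beyond $a^2/(2\sigma_0^2)$ the Riccati blows up in finite time and the exponential moment is genuinely infinite, which matches the statement's strict inequality.
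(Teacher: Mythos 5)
The paper does not actually prove this proposition: items (i) and (ii) are dispatched as ``classical'' with a citation to Glasserman, and item (iii) is attributed to Cozma and Reisinger (their Proposition 3.2). Your proposal therefore takes a genuinely different route in the sense that it supplies a self-contained proof where the paper supplies none. Your choices are the standard ones and they work: Yamada--Watanabe gives pathwise uniqueness and hence the Markov property; Feller's boundary test with the scale density $s(r)=r^{-2ab/\sigma_0^2}e^{2ar/\sigma_0^2}$ correctly classifies $0$ as unattainable when $2ab\ge\sigma_0^2$; the first two conditional moments follow from It\^o plus linear ODEs exactly as you describe; and the affine ansatz $u=\exp(\phi+\psi r)$ reduces (iii) to a Riccati equation whose discriminant $a^2-2\sigma_0^2\lambda$ is positive precisely under the stated hypothesis. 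What your approach buys is an explicit constant $C(\lambda,t)=\exp(\phi(t)+\psi(t)r_0)$ and a transparent explanation of where the threshold $a^2/(2\sigma_0^2)$ comes from, which the paper's citation hides; the cost is the verification step, which you handle correctly (and which can be shortened: since $N_s$ is a nonnegative local martingale it is a supermartingale, so $\PE[N_t]\le N_0$ already gives the claimed upper bound without any uniform-integrability argument).

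One slip to fix in item (iii): for $0<\lambda<a^2/(2\sigma_0^2)$ the two roots of $\tfrac{\sigma_0^2}{2}\psi^2-a\psi+\lambda$ are \emph{both positive} (their product is $2\lambda/\sigma_0^2>0$ and their sum is $2a/\sigma_0^2>0$), so your claim that $\psi_-<0<\psi_+$ and that $[\psi_-,\psi_+]$ is the relevant invariant interval is wrong in exactly the case of interest; the initial condition $\psi(0)=0$ does not lie in $[\psi_-,\psi_+]$. The conclusion survives with the correct observation: since $\psi(0)=0<\psi_-$ and the right-hand side of the Riccati equation is positive on $(-\infty,\psi_-)$ and vanishes at $\psi_-$, the solution increases monotonically to the smaller root $\psi_-$ and in particular stays bounded on $[0,t]$ for every $t$. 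This is a correctable local error, not a failure of the method.
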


The two first items of the previous proposition are classical, and may be found in \cite{Glasserman}. The exponential integrability of the CIR process is more recent and may be traced back to \cite{cozma} (Proposition 3.2). We emphasize that within realistic situations, the values of $a$ and $\sigma$ for the CIR process are in general linked within a reasonable ratio of $5 \leq \frac{a}{\sigma}\leq 10$, so that the limiting support of $\lambda$ is generally significantly larger than $1$.
Note also that this last result holds at any horizon time but it may be explicited for $t=1$ with a larger size of $\lambda$.

Proposition \ref{prop:CIR_technique} will be useful for deriving some important properties related to the bias of our approximation strategy.

\paragraph{Weak error associated to the implicit Euler scheme}

We recall the following statement of \cite{alfonsi:2013} proved in \cite{Dereich_neuenkirch2012}, that
 provides an upper bound of the weak error associated to the implicit Euler scheme\eqref{eq:discretisation_CIR}. 
In the previous reference, a strong error rate of order $1$ is also provided with stronger assumptions on the parameters.
\begin{proposition}[Theorem 1 in \cite{alfonsi:2013}]
\label{alfonsi:discr:CIR}
Assume that $2ab > \sigma_0^2$, then for any $p\in[1, \frac{2ab}{\sigma^2})$, a constant $K_p$ exists such that:
$$
\bigg( \PE\Big[ \underset{k \in \{0,\ldots,N\}}{\max}\big| 
\hat{r}_{ k h}  - r_{k h}\big|^p  \Big]\bigg)^{1/p} \leq K_p \sqrt{h}.
$$
\end{proposition}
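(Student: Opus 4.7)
The plan is to follow the Lamperti transform approach pioneered in \cite{alfonsi:2005,Dereich_neuenkirch2012,alfonsi:2013}. First, I would introduce the transformed process $y_t=\sqrt{r_t}$. Itô's formula (which applies since $r_t>0$ a.s. under $2ab>\sigma_0^2$) gives the SDE with additive noise
$$
dy_t = \Big(\frac{4ab-\sigma_0^2}{8 y_t} - \frac{a}{2} y_t\Big)dt + \frac{\sigma_0}{2}dB_0(t).
$$
Observe that the drift-implicit Euler scheme \eqref{eq:discretisation_CIR} is exactly the solution at time $(k+1)h$ of the \emph{implicit} scheme on $y$, namely
$$
\hat{y}_{(k+1)h} - \hat{y}_{kh} = \Big(\frac{4ab-\sigma_0^2}{8 \hat{y}_{(k+1)h}} - \frac{a}{2}\hat{y}_{(k+1)h}\Big)h + \frac{\sigma_0}{2}\Delta B_0^{(k)},
$$
with $\hat{r}_{kh}=\hat{y}_{kh}^2$. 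The explicit quadratic resolution gives positivity of $\hat{y}_{kh}$ (and hence of $\hat{r}_{kh}$) for every $k$.

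Second, I would establish positive and \emph{negative} moment bounds: under $p\in[1,2ab/\sigma_0^2)$ one has uniform estimates $\sup_t\mathbb{E}[y_t^{2p}]+\sup_t\mathbb{E}[y_t^{-2p}]<\infty$ and an analogous uniform-in-$k$ version for $\hat{y}_{kh}$. This is the key non-trivial ingredient: the exponent range $p<2ab/\sigma_0^2$ is exactly what allows the negative moments $\mathbb{E}[y_t^{-2p}]$ to remain bounded (this is classical for the CIR, via either the Feller semigroup or a direct Itô computation on $y_t^{-2p}$), and the discrete analogue is obtained by a recursion on $\mathbb{E}[\hat{y}_{kh}^{-2p}]$ exploiting the explicit formula \eqref{eq:discretisation_CIR}. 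The condition $a>2\sqrt{2}\sigma_0$ (Assumption \ref{hyp:param}) ensures such a $p$ can be taken $\geq 1$, which is needed for what follows.

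Third, I would control $e_k:=\hat{y}_{kh}-y_{kh}$. Writing
$$
e_{k+1} = e_k - \frac{a}{2}h\, e_{k+1} + \frac{4ab-\sigma_0^2}{8}h\Big(\frac{1}{\hat{y}_{(k+1)h}}-\frac{1}{y_{(k+1)h}}\Big) + R_{k+1},
$$
where $R_{k+1}$ gathers the one-step consistency errors between the SDE on $[kh,(k+1)h]$ and its implicit-Euler approximation (involving $\int_{kh}^{(k+1)h}(y_s - y_{(k+1)h})\,ds$ and a similar term for $1/y$), the mean-reversion term $-\tfrac{a}{2}h\,e_{k+1}$ contributes positively to the recursion while the term $\frac{4ab-\sigma_0^2}{8}h(1/\hat{y}_{(k+1)h}-1/y_{(k+1)h})$ has the \emph{favorable} sign: it equals $-\frac{4ab-\sigma_0^2}{8}h\,e_{k+1}/(y_{(k+1)h}\hat{y}_{(k+1)h})$, which pushes $|e_{k+1}|$ down. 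Hence one obtains a stable scheme of the form
$$
(1+c_k h)\,e_{k+1} = e_k + R_{k+1},\qquad c_k\geq 0.
$$
The consistency estimate $\|R_{k+1}\|_{L^p}\leq C h^{3/2}$ is obtained by bounding $\int_{kh}^{(k+1)h}|y_s-y_{(k+1)h}|ds$ via the Hölder-$1/2$ regularity of $y$ (standard SDE estimate) and the analogous bound for $1/y_s-1/y_{(k+1)h}$ via the mean-value theorem combined with the uniform control of $\sup_s\mathbb{E}[y_s^{-q}]$ for $q$ slightly above $p$.

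Finally, a discrete Gronwall inequality combined with Doob's maximal inequality applied to the martingale part of $\sum_{k}R_{k+1}$ yields $(\mathbb{E}\max_k|e_k|^p)^{1/p}\leq K_p\sqrt{h}$. The rate $\sqrt{h}$ transfers to $r$ via $|\hat{r}_{kh}-r_{kh}|=|\hat{y}_{kh}+y_{kh}|\,|\hat{y}_{kh}-y_{kh}|$ and the Cauchy--Schwarz inequality, using the positive-moment bounds from the second step. The main obstacle is clearly the joint handling of the negative moments of $\hat{y}_{kh}$ uniformly in $h$; without the sharp range $p<2ab/\sigma_0^2$ the recursive Gronwall argument would blow up, which is why both items of Assumption \ref{hyp:param} enter the statement.
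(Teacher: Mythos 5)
The paper does not prove this proposition at all: it is imported verbatim as Theorem 1 of \cite{alfonsi:2013} (itself proved in \cite{Dereich_neuenkirch2012}), so there is no in-paper argument to compare against. What you have written is a reconstruction of the proof strategy of those references, and it is the right one: Lamperti transform $y=\sqrt{r}$, identification of \eqref{eq:discretisation_CIR} as the drift-implicit Euler scheme on $y$, positive and negative moment bounds with the range $p<2ab/\sigma_0^2$ dictated by the negative moments of the CIR, a one-step error recursion in which both the mean-reversion term and the $1/y$ term are dissipative (so that $(1+c_kh)e_{k+1}=e_k+R_{k+1}$ with $c_k\ge a/2$), a consistency bound $\|R_{k+1}\|_{L^p}\le Ch^{3/2}$ from the H\"older-$1/2$ regularity of $y$, and summation. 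This is essentially the Dereich--Neuenkirch--Szpruch argument.

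Three points deserve tightening. First, once you have $|e_{k+1}|\le|e_k|+|R_{k+1}|$ the bound $\max_k|e_k|\le\sum_j|R_j|$ follows from the triangle inequality alone; invoking Doob's maximal inequality is unnecessary (and $R_{k+1}$ is not a martingale increment, since the Brownian increments cancel exactly between the scheme and the true process driven by the same noise). Second, the transfer $|\hat r_{kh}-r_{kh}|=|\hat y_{kh}+y_{kh}|\,|e_k|$ via Cauchy--Schwarz would require the $y$-error in $L^{2p}$ and hence halve the admissible range of $p$; you should instead use H\"older with asymmetric exponents, putting $|e_k|$ in $L^{p(1+\delta)}$ for small $\delta$ (still below $2ab/\sigma_0^2$) and $|\hat y_{kh}+y_{kh}|$ in a high $L^q$, which is harmless since all positive moments are finite. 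Third, the nonemptiness of $[1,2ab/\sigma_0^2)$ is already guaranteed by the proposition's own hypothesis $2ab>\sigma_0^2$; the condition $a>2\sqrt{2}\sigma_0$ from Assumption \ref{hyp:param} is used elsewhere in the paper (for the exponential integrability in Lemma \ref{lem:exp_delta}) and plays no role here.
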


\paragraph{Control of the error in the approximation of the integral of the CIR}
Recall the definition of $\Delta_h $ in \eqref{eq:delta_N}
\begin{equation*}
\Delta_h = \int_0^1 r_s ds - \frac{1}{N} \sum_{k=0}^{N-1} \hat{r}_{kh }.
\end{equation*}
Let us now prove two technical lemmas first on the estimation of $\PE(\Delta_h^2)$ then on exponential moments.

\begin{lemma}\label{lem:moment_delta_fini}Assume that the CIR parameters satisfy $ab>\sigma^2$, then there exists a constant $C$ such that, for any $h=N^{-1}>0$, 
$$
\PE[ |\Delta_h|^2 ] \le C h.
$$

\end{lemma}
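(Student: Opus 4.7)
The plan is to split the error into a pure Riemann–sum error on the continuous CIR and a purely discretization error coming from the drift–implicit Euler scheme:
$$
\Delta_h \;=\; \underbrace{\int_0^1 r_s\,ds - \frac{1}{N}\sum_{k=0}^{N-1} r_{kh}}_{=:A_h} \;+\; \underbrace{\frac{1}{N}\sum_{k=0}^{N-1}\bigl(r_{kh}-\hat r_{kh}\bigr)}_{=:B_h}.
$$
By $(a+b)^2\le 2a^2+2b^2$ it suffices to show $\PE[A_h^2]\le Ch^2$ and $\PE[B_h^2]\le Ch$.

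For $B_h$, the assumption $ab>\sigma_0^2$ is exactly what is needed to apply Proposition~\ref{alfonsi:discr:CIR} with $p=2$, since then $p=2<2ab/\sigma_0^2$. This directly gives
$$
\PE[B_h^2] \;\le\; \PE\!\Big[\max_{0\le k\le N}|r_{kh}-\hat r_{kh}|^2\Big] \;\le\; K_2^2\,h,
$$
which handles the dominant term.

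The term $A_h$ is where the real work lies, and the main trick is to rewrite it as a single stochastic integral against a small weight. Writing
$$
A_h \;=\; \sum_{k=0}^{N-1}\int_{kh}^{(k+1)h}\!\bigl(r_s - r_{kh}\bigr)\,ds,
$$
using the CIR dynamics $dr_u = a(b-r_u)\,du + \sigma_0\sqrt{r_u}\,dB_0(u)$, and applying Fubini to the double integrals gives
$$
A_h \;=\; \int_0^1 \varphi_h(u)\bigl[a(b-r_u)\,du + \sigma_0\sqrt{r_u}\,dB_0(u)\bigr],
\qquad \varphi_h(u):=(k+1)h - u \text{ for } u\in[kh,(k+1)h],
$$
so that $0\le \varphi_h(u)\le h$ on $[0,1]$. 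The drift contribution is bounded by Cauchy--Schwarz:
$$
\PE\!\Big[\Big(\!\!\int_0^1 \varphi_h(u)\,a(b-r_u)\,du\Big)^{\!2}\Big] \;\le\; a^2\!\int_0^1\!\varphi_h(u)^2\,du \cdot\!\int_0^1\!\PE[(b-r_u)^2]\,du \;\le\; C h^2,
$$
while the martingale part is controlled by the Itô isometry:
$$
\PE\!\Big[\Big(\!\!\int_0^1 \varphi_h(u)\sigma_0\sqrt{r_u}\,dB_0(u)\Big)^{\!2}\Big] \;=\; \sigma_0^2\int_0^1 \varphi_h(u)^2\,\PE[r_u]\,du \;\le\; \sigma_0^2\,h^2\!\int_0^1 \PE[r_u]\,du.
$$
Both $\PE[r_u]$ and $\PE[r_u^2]$ are bounded on $[0,1]$ by Proposition~\ref{prop:CIR_technique}(ii), so $\PE[A_h^2]\le Ch^2$. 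Adding the two estimates and absorbing $h^2$ into $h$ yields the claim.

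The only delicate step is the $L^2$ control on $B_h$: without the Fubini/weight rewriting we would have to handle the nontrivial cross–covariances between the increments $r_s-r_{kh}$ at different $k$'s, whereas Proposition~\ref{alfonsi:discr:CIR} with $p=2$ is available precisely under the present assumption $ab>\sigma_0^2$.
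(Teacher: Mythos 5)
Your proof is correct, but it follows a genuinely different route from the paper's. The paper first pushes the square inside the sum and the time integral (Jensen then Cauchy--Schwarz), which reduces everything to bounding $\PE[(r_s-\hat r_{kh})^2]$ pointwise; it then splits $r_s-\hat r_{kh}$ into $(r_s-r_{kh})+(r_{kh}-\hat r_{kh})$, computes $\PE[(r_s-r_{kh})^2]$ explicitly from the conditional mean and variance formulas of Proposition \ref{prop:CIR_technique}(ii) (obtaining $O(h)$ for the Riemann part, since the cancellation between martingale increments on different subintervals is lost once the square is inside the sum), and invokes Proposition \ref{alfonsi:discr:CIR} with $p=2$ for the scheme part. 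You instead split $\Delta_h=A_h+B_h$ \emph{before} squaring. For $B_h$ your bound via the maximum is equivalent to the paper's summation of the uniform bound, and both uses of Proposition \ref{alfonsi:discr:CIR} with $p=2$ rest on the same hypothesis $ab>\sigma_0^2$; this term is where the two proofs coincide. For $A_h$ your stochastic-Fubini rewriting as $\int_0^1\varphi_h(u)\,dr_u$ with $0\le\varphi_h\le h$, followed by the It\^o isometry, retains the martingale cancellation and yields the sharper rate $O(h^2)$, whereas the paper only gets $O(h)$ for this piece; since the scheme error is $O(h)$ anyway, the final rate is unchanged, but your argument is arguably cleaner and needs only that $\PE[r_u]$ and $\PE[r_u^2]$ are bounded on $[0,1]$ rather than the explicit moment computations. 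One expository slip: your closing remark attributes the Fubini/weight rewriting to the control of $B_h$, whereas it is the device you use for $A_h$; the delicate cross-covariances you mention are precisely those your treatment of $A_h$ circumvents.
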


\begin{proof}[Proof of Lemma \ref{lem:moment_delta_fini}]

We   bound $\Delta_h^2$ using first the Jensen inequality and then the  Cauchy-Schwarz inequality, we obtain that:

\begin{align*}
{\Delta}_h^2 = \Big( \frac{1}{N} N\sum_{k=0}^{N-1}  \int_{kh}^{(k+1)h} (r_s - \hat{r}_{kh}) \text{d} s \Big)^2 
& \leq \frac{1}{N} \sum_{k=1}^{N} \Big(N \int_{kh}^{(k+1)h} (r_s - \hat{r}_{kh}) \text{d}s \Big)^2 \\
& \leq \sum_{k=1}^{N} \int_{kh}^{(k+1)h} (r_s - \hat{r}_{kh})^2 \text{d}s.  
\end{align*}
Now taking the expectation and decomposing between the approximation error from the numerical scheme at time $kh$ and the regularity of the CIR process we obtain that:
\begin{align}
\PE[{\Delta}_h^2]
&\leq 2 \sum_{k=1}^{N} \Big( \int_{(k-1)h}^{kh} \PE[(r_s - r_{kh})^2] +\PE [(r_{kh}- \hat{r}_{kh})^2]\Big) \text{d}s \nonumber\\
&\leq 2 \sum_{k=1}^{N}  \int_{(k-1)h}^{kh} \PE [(r_s - r_{kh})^2 ] \text{d}s +2h \sum_{k=0}^{N-1} \PE [(r_{kh}- \hat{r}_{kh})^2] \label{decomposition:EDelta}.
\end{align}

Let us start with the first term in the right hand side of the equation above.  We use the explicit expression of conditional expectation and variances of the CIR $r_t $ stated in $ii)$ of Proposition \ref{prop:CIR_technique}. Thus, for any $s=kh+u$ with $0\le u \le h$, the bias-variance decomposition associated with Equations \eqref{eq:ito1_CIR} and \eqref{eq:ito2_CIR} leads to:
\begin{align*}
\PE\big[ (r_s - r_{kh})^2 \, \vert r_{kh}\big] &= \big(\PE[r_s \, \vert r_{kh}] - r_{kh}\big)^2 + \mathbb{V}[r_s \, \vert r_{kh}]\\
& = (b-r_{kh})^2(1-e^{-a u})^2 + \frac{\sigma^2}{a} r_{kh} e^{- a s} (1-e^{- au}) \\
&\qquad  + b \frac{\sigma^2}{2 a}(1-e^{- a u})^2.
\end{align*}
We then use again Equation \eqref{eq:ito1_CIR} to obtain a recursive expression of $\PE(r_{kh})$: 
\begin{align*}
\PE[r_{(k+1)h}]
&=\PE\big[\PE[r_{(k+1)h}\vert r_{kh}]\big]=\PE[r_{kh}) e^{-ah}+b(1-e^{-ah}]\\
&=e^{-a(k+1)h}(\PE[r_0]-b) +b.
\end{align*}
Therefore 
$$\sum_{k=1}^{N}\PE[r_{kh}]=bN+(\PE[r_0]-b) \sum_{k=1}^{N}e^{-akh}=bN+(\PE[r_0]-b)\frac{1-e^{-a}}{1-e^{-ah}}.$$
From Equation \eqref{eq:ito2_CIR} and a conditional expectation argument  we deduce that:
$$\PE [(b-r_{(k+1)h})^2]=\PE[r_{kh}] \frac{\sigma^2}{a} (e^{-ah} - e^{- 2 ah }) + \frac{b \sigma^2}{2 a} (1-e^{- ah })^2.$$
We then obtain that:
\begin{align*}
\sum_{k=1}^{N}  \int_{(k-1)h}^{kh} \PE[(r_s &- r_{kh})^2] \text{d} s
=\sum_{k=1}^{N}\PE [(b-r_{kh})^2 ]\int_{0}^h (1-e^{-a u})^2  \text{d} u
\\
&+ \frac{\sigma^2}{a} \sum_{k=1}^{N}\PE[r_{kh}] \int_{0}^h  e^{- au} (1-e^{-a u})  \text{d} u+ b \frac{\sigma^2}{2a} N\int_0^h (1-e^{-a u})^2  \text{d} u.
\end{align*}
We then compute that:
$$
\int_{0}^h (1-e^{-a u})^2  \text{d} u = \frac{a^2 h^3}{3}  + O_{a}(h^4)$$ and$$
\int_{0}^h  e^{- au} (1-e^{-a u})  \text{d} u =  \frac{a h^2}{2} -  \frac{a^2 h^3}{2} +O_{a}(h^4).
$$
Finally we deduce that for some $C>0$
\begin{equation}\label{eq:Delta_h}
\sum_{k=1}^N  \int_{(k-1)h}^{kh} \PE[(r_s - r_{kh})^2] =C h+ O_{a,b}(h^2).
\end{equation}
We now turn to the second term above, that we estimate using the strong error rate of Proposition \ref{alfonsi:discr:CIR} for $p=2$.
Assuming $ab>\sigma_0^2$, we get:
$$
\PE[(r_{kh}- \hat{r}_{kh})^2] \le \PE[ \max_{1 \le k \le N}  (r_{kh}- \hat{r}_{kh})^2 ] \le C h.
$$
Since this bound is uniform in $k$, we deduce that :
\begin{equation}\label{ctrl:discr:err}
\sum_{k=1}^N \PE[(r_{kh}- \hat{r}_{kh})^2]  \le C.
\end{equation}
Combining estimates \eqref{eq:Delta_h} and \eqref{ctrl:discr:err} in equation \eqref{decomposition:EDelta} gives the conclusion.
\end{proof}

\begin{lemma}\label{lem:exp_delta}
For any choice of $q \leq 2$, if $a >2 \sqrt{2} \sigma_0$ then: 
\begin{equation}\label{TL:finie}
\sup_{N} \PE [e^{q |\Delta_h|}] < + \infty.
\end{equation}
\end{lemma}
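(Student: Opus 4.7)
The plan is to bound $|\Delta_h|$ by two non-negative quantities, then apply Cauchy-Schwarz. Since the CIR process $r_s$ is almost surely non-negative (Proposition~\ref{prop:CIR_technique}~i)) and the implicit Euler scheme $\hat r_{kh}$ is non-negative by construction (see the discussion after \eqref{eq:discretisation_CIR}), the triangle inequality yields
\begin{equation*}
|\Delta_h| \le \int_0^1 r_s\,ds + \frac{1}{N}\sum_{k=0}^{N-1}\hat r_{kh}.
\end{equation*}
Cauchy-Schwarz then gives
\begin{equation*}
\PE[e^{q|\Delta_h|}] \le \PE\bigl[e^{2q\int_0^1 r_s\,ds}\bigr]^{1/2}\,\PE\bigl[e^{2q\cdot\frac{1}{N}\sum_{k=0}^{N-1}\hat r_{kh}}\bigr]^{1/2}.
\end{equation*}
The first factor is controlled by Proposition~\ref{prop:CIR_technique}~iii): under the assumption $a > 2\sqrt{2}\sigma_0$, we have $a^2/(2\sigma_0^2) > 4 \ge 2q$ for $q \le 2$, so this expectation is bounded by a constant independent of $N$.

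For the second factor, I would compare the scheme to the true process by writing $\hat r_{kh} \le r_{kh} + |r_{kh} - \hat r_{kh}|$, and average over $k$:
\begin{equation*}
\frac{1}{N}\sum_{k=0}^{N-1}\hat r_{kh} \le \frac{1}{N}\sum_{k=0}^{N-1} r_{kh} + \max_{0\le k\le N}|r_{kh}-\hat r_{kh}|.
\end{equation*}
A second Cauchy-Schwarz then reduces the problem to bounding $\PE[e^{4q\cdot\frac{1}{N}\sum_k r_{kh}}]$ and $\PE[e^{4q \max_k |r_{kh}-\hat r_{kh}|}]$ uniformly in $N$. The first is handled by Jensen's inequality, $e^{4q\cdot\frac{1}{N}\sum_k r_{kh}} \le \frac{1}{N}\sum_k e^{4q r_{kh}}$, together with the explicit Laplace transform of the non-central chi-squared marginal law of the CIR, which provides a uniform-in-$k\le N$ bound on $\PE[e^{4q r_{kh}}]$ under the assumption on $(a,\sigma_0)$.

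The main obstacle is $\PE[e^{4q\max_k |r_{kh}-\hat r_{kh}|}]$: Proposition~\ref{alfonsi:discr:CIR} only furnishes $L^p$-control for $p < 2ab/\sigma_0^2$, which prevents a direct Taylor-series estimate of the exponential. I would circumvent this by dominating $\max_k|r_{kh}-\hat r_{kh}|$ by $\max_k r_{kh} + \max_k \hat r_{kh}$, and then deriving uniform exponential moments for each maximum separately: for $\max_k r_{kh}$ via a Doob-type maximal estimate on the exponential super-martingale associated with the CIR Laplace transform, and for $\max_k \hat r_{kh}$ by an induction on the closed-form recursion \eqref{eq:discretisation_CIR}, exploiting the sub-Gaussianity of the Brownian increments $\Delta B_0^{(k)}$ and the contractive factor $1/(1+ah/2)$ appearing in the scheme. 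The interplay between the strong-error control of order $\sqrt{h}$ and these exponential bounds then yields the desired uniform-in-$N$ estimate.
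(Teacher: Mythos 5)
Your opening moves (positivity of $r$ and of the scheme, the triangle inequality $|\Delta_h|\le\int_0^1 r_s\,ds+\frac1N\sum_k\hat r_{kh}$, Cauchy--Schwarz, and the control of $\PE[e^{2q\int_0^1 r_s\,ds}]$ via Proposition \ref{prop:CIR_technique} iii)) coincide exactly with the paper's proof. The gap is in your treatment of the second factor. By routing through $\hat r_{kh}\le r_{kh}+|r_{kh}-\hat r_{kh}|$ you make the argument depend on exponential moments of the \emph{marginals} of the CIR, and these are not available under the hypothesis $a>2\sqrt2\,\sigma_0$. Indeed $r_t$ is a scaled noncentral chi-squared variable and $\PE[e^{\theta r_t}]<+\infty$ only for $\theta<\frac{2a}{\sigma_0^2(1-e^{-at})}$; for $q=2$ your first sub-term (after Jensen) already requires $\PE[e^{8 r_1}]<+\infty$, i.e.\ $8<\frac{2a}{\sigma_0^2(1-e^{-a})}$, which fails for instance for $\sigma_0=1$, $a=3$ (threshold $\approx 6.3$) even though $a>2\sqrt2\,\sigma_0$ holds and the lemma applies. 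The hypothesis $a>2\sqrt2\,\sigma_0$ only controls the Laplace transform of the \emph{time average} $\int_0^1 r_s\,ds$ (threshold $a^2/(2\sigma_0^2)$, a function of the ratio $a/\sigma_0$ alone), not that of the endpoint value, which depends on $a$ and $\sigma_0$ separately. The situation is worse for $\PE[e^{4q\max_k|r_{kh}-\hat r_{kh}|}]$: dominating the maximum by $\max_k r_{kh}+\max_k\hat r_{kh}$ discards the $\sqrt h$ strong-error smallness entirely --- so the ``interplay'' with Proposition \ref{alfonsi:discr:CIR} that you invoke at the end never takes place --- and leaves you needing exponential moments of order $\ge 8q$ for the running maximum of the CIR, an even stronger unavailable requirement.

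The paper closes the argument without ever touching the marginals of the true process: it bounds $\frac1N\sum_k\hat r_{kh}$ directly from the closed-form recursion \eqref{eq:discretisation_CIR}, showing via elementary Young-type inequalities that $\hat r_{(k+1)h}\le\hat r_{kh}+\alpha\,\xi_{k+1}^2+\beta$ with i.i.d.\ standard Gaussians $\xi_j$ and $\alpha,\beta=O(h)$, telescoping to $\frac1N\sum_k\hat r_{kh}\le\hat r_0+\alpha\sum_{j\le N}\xi_j^2+\beta N$, and then applying the $\chi^2$ Laplace transform; since $\alpha N$ and $\beta N$ stay bounded, the resulting bound is uniform in $N$. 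You actually name this ingredient (``induction on the closed-form recursion \eqref{eq:discretisation_CIR}, exploiting the sub-Gaussianity of the increments and the contractive factor'') in your last sentence, but only as a device for $\max_k\hat r_{kh}$ inside the comparison that does not close; applied directly to the Cesaro sum it finishes the proof on its own.
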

\begin{proof}[Proof of Lemma \ref{lem:exp_delta}]
We use the Cauchy-Schwarz inequality to upper-bound the expectation:
\begin{align*}
\PE[e^{q |\Delta_h|}] &=\PE\Big[e^{q\big|\int_0^1 r_s ds - \frac{1}{N} \sum_{k=1}^{N} \hat{r}_{k h}\big|}\Big]\\
&\le \PE\Big[e^{q\big|\int_0^1 r_s ds \big|}e^{q\big|\frac{1}{N} \sum_{k=1}^{N} \hat{r}_{kh}\big|}\Big]\\
&\le \PE \Big[ e^{2q \int_{0}^{1}  r_s \text{d} s}\Big]^\frac12 \times \PE \Big[ e^{\frac{2q}{N} \sum_{k=1}^{N} \hat{r}_{kh}}\Big]^\frac12.
\end{align*}
We apply $iii)$ of Proposition \ref{prop:CIR_technique} and observe that since $a > 2 \sqrt{ 2} \sigma_0$, then $2 q \leq 4 \leq \frac{a^2}{2 \sigma_0^2}$ and:
$$
 \PE\Big[ e^{2 q \int_{0}^{1}  r_s \text{d} s}\Big]^\frac12 < + \infty.
$$

For the second part, we rely on the recursive expression for $ \hat{r}_{kh}$ stated in \eqref{alfonsi:discr:CIR}, that we bound using:
$$\forall \epsilon >0 \qquad (u+v)^2=u^2+v^2+2uv\le u^2(1+1/\epsilon)+v^2(1+\epsilon).$$
Thus 
\begin{align*}
\hat{r}_{(k+1)h}&= \bigg(
  \frac{\sqrt{\hat{r}_{kh}}+ \frac{\sigma_0}{2} \Delta B_0^{(k)}}{2(1+\frac{ah}{2})} +\sqrt{  \frac{ \big(\sqrt{\hat{r}_{kh}}+\frac{\sigma_0 }{2}\Delta B_0^{(k)}\big)^2}{4(1+\frac{ah}{2})^2}+  \frac{(4ab-\sigma_0^2)h}{8(1+\frac{ah}{2})} } 
   \bigg)^2
\\
&\le
  (1+\frac{\epsilon}{2}) \bigg(  \frac{\sqrt{\hat{r}_{kh}}+ \frac{\sigma_0}{2} \Delta B_0^{(k)}}{2(1+\frac{ah}{2})}\bigg)^2 \\
  &\qquad+ (1+\frac{1}{2\epsilon} )\bigg( \frac{ \big(\sqrt{\hat{r}_{kh}}+ \frac{\sigma_0 }{2}\Delta B_0^{(k)}\big)^2}{4(1+\frac{ah}{2})^2}+  \frac{(4ab-\sigma_0^2)h}{8(1+\frac{ah}{2})}  \bigg)\\
  &\le
  (2+\frac{\epsilon}{2}+\frac{1}{2\epsilon}) \frac{ \big(  \sqrt{\hat{r}_{kh}}+ \frac{\sigma_0}{2} \Delta B_0^{(k)}\big)^2}{4(1+\frac{ah}{2})^2} + (1+\frac{1}{2\epsilon} ) \frac{(4ab-\sigma_0^2)h}{8(1+\frac{ah}{2})}  \\
    &\le
\frac{  (2+\frac{\epsilon}{2}+\frac{1}{2\epsilon})( 1+2\tau)}{4(1+\frac{ah}{2})^2} \hat{r}_{kh}+ \frac{  (2+\frac{\epsilon}{2}
+\frac{1}{2\epsilon})( 1+\frac{1}{2\tau})}{4(1+\frac{ah}{2})^2}\frac{\sigma_0^2}{4} \{\Delta B_0^{(k)}\}^2\\
&\qquad+ (1+\frac{1}{2\epsilon} ) \frac{(4ab-\sigma_0^2)h}{8(1+\frac{ah}{2})}.
\end{align*}

Choosing $\epsilon=1$ and $\tau=\frac17<\frac{1}{6}$, we deduce that: 
\begin{align*}
\hat{r}_{(k+1)h}
    &\le
\frac{ 1}{(1+\frac{ah}{2})^2} \hat{r}_{kh}+ \frac{27}{32(1+\frac{ah}{2})^2}\frac{\sigma_0^2}{4} \{\Delta B_0^{(k)}\}^2+  \frac{3(4ab-\sigma_0^2)h}{16(1+\frac{ah}{2})} 
\end{align*}
Using the independence and stationarity of the Brownian motion, we set $\xi_{k+1}$ a standard Gaussian random variable such that $\xi_{k+1} \sqrt{h} =\Delta B_0^{(k)}$, therefore:
$$
\hat{r}_{(k+1)h} \le \hat{r}_{kh}   +  \alpha \xi_{k+1}^2 + \beta$$
where 
$$\alpha =  \frac{27 }{32(1+\frac{ah}{2})^2}\frac{\sigma_0^2}{4} h \quad \text{and} \quad 
\beta =  \frac{3(4ab-\sigma_0^2)h}{16(1+\frac{ah}{2})} .
$$
A straightforward recursion yields:
\begin{align*}
\hat{r}_{(k+1)h} \le \hat{r}_{0}   +  \alpha\sum_{j=0}^{k} \xi_{j+1}^2 + \beta k.
\end{align*}
 In particular, we can use this to upper bound the sum with:
$$
\frac{1}{N}\sum_{k=1}^N \hat{r}_{kh} \le \hat{r}_{0}   +  \alpha\sum_{j=1}^{N} \xi_{j+1}^2 + \beta N.
$$
We can now bound the Laplace transform, using the independence of the $\xi_j$ and the Laplace transform of a $\chi^2$ distribution $\PE(e^{\lambda \xi^2})=(1-2\lambda)^{-1/2}, \forall \lambda < 1/2$.
\begin{align*}
\PE \big[ e^{\frac{2q}{N}\sum_{k=1}^N \hat{r}_{kh } }\big]
& \le e^{2q\hat{r}_0+ 2qN \beta} \prod_{j=1}^{N} \PE \big[  e^{ 2q\alpha \xi_j^2} \big] \\ 
&\le e^{2q\hat{r}_0+ 2qN \beta} 
  \big( \frac{1}{1-4 q \alpha N}\big)^{1/2} \\
&\le e^{2q\hat{r}_0+ 2qN \beta}  e^{2 q \alpha N},
\end{align*}
where the last line derives from $1/(1-t) \le e^{t}$.
From our choice of $\alpha$ and $\beta$, we observe that $\alpha N< +\infty$ and $\beta N< + \infty$.
We then conclude that  
$\PE \big[ e^{2q \frac{1}{N} \sum_{k=1}^N \hat{r}_{kh}}\big]^\frac12<+\infty$.
\end{proof}

\subsection{Results on the portfolio $Z(t)$}
\begin{proposition}[Density of the portfolio]\label{prop:densite_bornee}
Assume that $\Sigma$ is positive definite, then the distribution of $S=(Y_1,S_1,\ldots,S_{m'})$ at time $1$ is absolutely continuous with respect to the Lebesgue measure. Moreover, the density $p$ may be written as:
$$
p:(z_1,\ldots,z_m) \longmapsto \frac{M(z_1,\ldots,z_m)}{(\sigma_1 \ldots \sigma_{m'})  \det(L) z_1 \ldots z_m},
$$
where $M$ is a bounded continuous function, whose bound is independent from  $m$ and $\Sigma$.
\end{proposition}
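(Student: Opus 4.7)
The plan is to obtain the density of the portfolio vector by conditioning on the trajectory of $W_0$ and performing an explicit triangular change of variables for the geometric Brownian components, using the Cholesky factorization in a crucial way.

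First, I would exploit the decomposition induced by the Cholesky factorization: $W_0 = B_0$, the variables $G_j := W_j(1)$ for $j \ge 1$ are i.i.d.\ standard normal and independent of the full path $(W_0(t))_{t \le 1}$, and each $S_i$ reads, from \eqref{eq:simulation_GBM}, as a deterministic function $F_i(W_0(1), G_1, \ldots, G_i)$ that is lower triangular in $(G_1, \ldots, G_i)$. In particular, the random vector $(Y_1, W_0(1), G_1, \ldots, G_{m'})$ has density
$q(y, g_0) \prod_{j=1}^{m'} (2\pi)^{-1/2} e^{-g_j^2/2}$
on $\mathbb{R}_+^\ast \times \mathbb{R}^{m}$, where $q$ denotes the joint density of the pair $(Y_1, W_0(1))$.

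Second, for fixed $y$ and $g_0$, the map $(g_1, \ldots, g_{m'}) \mapsto (z_2, \ldots, z_m) := (F_1(g_0, g_1), \ldots, F_{m'}(g_0, g_1, \ldots, g_{m'}))$ is a diffeomorphism from $\mathbb{R}^{m'}$ onto $(\mathbb{R}_+^\ast)^{m'}$ whose Jacobian matrix is lower triangular with diagonal entries $\sigma_i \ell_{ii} z_{i+1}$, hence of determinant $(\sigma_1 \cdots \sigma_{m'}) \det(L)\, z_2 \cdots z_m$ (using that $\det L = \prod_i \ell_{ii}$, the first row of $L$ being $(1, 0, \ldots, 0)$). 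Plugging this change of variables into the test-function integral and then integrating out $g_0$ yields the announced form of the density with
\begin{equation*}
M(z_1, \ldots, z_m) \;=\; z_1 \int_{\mathbb{R}} q(z_1, g_0) \prod_{j=1}^{m'} \frac{1}{\sqrt{2\pi}} \exp\!\Big(-\tfrac{1}{2}\,g_j(g_0, z_2, \ldots, z_m)^2\Big) \, dg_0,
\end{equation*}
where the $g_j$'s are recovered recursively by inverting the triangular system $z_{i+1} = F_i(g_0, g_1, \ldots, g_i)$. For the uniform bound, each Gaussian density is bounded by $(2\pi)^{-1/2} \le 1$, so the product inside the integral is bounded by $1$ independently of $m$ and of the entries of $\Sigma$. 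Integrating out $g_0$ then yields
\begin{equation*}
M(z_1, \ldots, z_m) \;\le\; z_1 \int_{\mathbb{R}} q(z_1, g_0) \, dg_0 \;=\; z_1 \, p_{Y_1}(z_1) \;=\; f_I\!\Big(\log\tfrac{z_1}{Y_0}\Big),
\end{equation*}
via the change of variable $Y_1 = Y_0 e^I$ with $I := \int_0^1 r_s \, ds$. Continuity of $M$ then follows from dominated convergence with the same envelope.

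The main obstacle will therefore be the existence of the joint density $q$ of $(Y_1, W_0(1))$ and the boundedness of $f_I$, both of which depend only on the CIR parameters $(a, b, \sigma_0)$ and neither on $m$ nor on $\Sigma$. I expect to obtain these two ingredients from the explicit Laplace/characteristic transform of $(I, W_0(1))$ of Feller type: the associated Riccati system provides a closed form whose decay at infinity ensures Fourier invertibility and produces a bounded continuous density, the condition $2ab > \sigma_0^2$ of Assumption~\ref{hyp:param} giving the required non-degeneracy. An alternative route would be Malliavin calculus, the non-degeneracy of the Malliavin covariance matrix following again from $2ab > \sigma_0^2$.
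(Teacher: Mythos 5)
Your proposal is correct and follows essentially the same route as the paper: Cholesky triangularization of the correlated Brownian motions, the explicit lower-triangular change of variables with Jacobian $(\sigma_1\cdots\sigma_{m'})\det(L)\,z_2\cdots z_m$, and the uniform bound obtained by dominating the Gaussian factor by $1$ and reducing to the sup-norm of the density of $\int_0^1 r_s\,ds$. The only cosmetic differences are that you parametrize by $Y_1$ rather than by $I=\int_0^1 r_s\,ds$ (hence the extra factor $z_1$ in your $M$), and that you propose to derive the joint density of $(I, W_0(1))$ from the Feller/Riccati transform rather than citing Pitman--Yor and Gulisashvili--Stein as the paper does; both versions rest on exactly the same external input.
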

\begin{proof}
Let us argue first in the case of uncorrelated Brownian motions in equations \eqref{eq:portfolio}, \textit{i.e.} when $\Sigma$ is the identity matrix.
The correlated case will again be dealt with using Cholesky decomposition.
Note that 
$$
Y_1(t) =  Y_1(0) \exp ( \int_0^t r_s ).
$$
The density of the integral of the CIR process can be derived from Pitman and Yor \cite{pitman:yor:1982}, wherein the authors give an expression of the Laplace transform of the integral of a Bessel process. 
This representation can in turn be used to derive the Laplace transform of the $Y_1(t)$, which upon inversion, gives its density, see e.g. Gulisashvili and Stein \cite{gulisashvili:stein:2010}. Next, since the assets $S_1(t),\dots, S_{m'}(t)$ are geometric brownian motions, their density can be computed explicitly, and we obtain the density of the vector $(Y_1,S_1,\ldots,S_{m'})$ by multiplying these densities together, thanks to the independence of the driving Brownian motions.

Furthermore,  to justify the existence of a density for the pair $(\int_0^1 r_s ds,W_0(1))$, we can once again cite \cite{gulisashvili:stein:2010}. This density is denoted by $q$ and   verifies for any bounded function $\Psi$:
$$
\mathbb{E}\big[\Psi(\int_{0}^1 r_s \text{d}s,W_0(1))\big] = \int \Psi(I,w_0) q(I,w_0) \text{d}I \text{d}w_0.
$$
Below, we denote by $q_1$ the marginal distribution of $\int_{0}^1 r_s \text{d}s$, whose density is bounded on $\mathbb{R}$ (see among others \textit{e.g.} \cite{pitman:yor:1982} and \cite{gulisashvili:stein:2010}).
To describe the other components, we introduce: $$c_k=\mu_k- \frac{\{\sigma_k\}^2}{2},$$ and using a conditional expectation argument, we have:
\begin{align*}
\mathbb{E}&[f(Y_1,S_1\ldots,S_{m'})] \\
&= 
\mathbb{E}\Big[f\big(Y_1(0) e^{\mathcal{I}},S_1(0) e^{c_1 + \sigma_1 ((\tilde{L} \tilde{W})_1+\ell_{0,1} W_0)},\ldots,S_{m'}(0) e^{c_{m'} + \sigma_{m'} ((\tilde{L} \tilde{W})_{m'}+\ell_{0,m'} W_0)} \big)\Big]\\
& = \int_{\mathbb{R}_+ \times \mathbb{R}} \int_{\mathbb{R}^{m-1}} q(I,w_0)   \gamma_{m'}(\tilde{w}) \\
&  f\Big(Y_1(0) e^{I},S_1(0) e^{c_1 + \sigma_1 \big((\tilde{L} \tilde{w})_1+\ell_{0,1} w_0\big) },S_{m'}(0) e^{c_{m'} + \sigma_{m'} \big((\tilde{L} \tilde{w})_{m'}+\ell_{0,m'} w_0\big)}\Big) \text{d}\tilde{w} \text{d}I \text{d}w_0,
\end{align*}
where $\gamma_{m-1}$ refers to the $m-1$ dimensional standard centered Gaussian distribution. For a fixed value of $w_0$, we consider the change of variables:
$$
z_1=Y_1(0) e^I \quad \text{and} \quad z_{i+1} = S_i(0) e^{c_i + \sigma_i(\tilde{L} \tilde{w})_i + \sigma_i \ell_{0,i} w_0},
$$
which is equivalent to $I(z_1)=\log(\frac{z_1}{Y_1(0)})$
and
$$ \tilde{w}(z_2,\ldots,z_m,w_0) = \tilde{L}^{-1} \begin{pmatrix}
 \{\sigma_1\}^{-1} \Big(\log\big( \frac{z_2}{S_1(0)}\big)-c_1-\ell_{0,1} w_0 \Big) \\ \vdots \\ 
 \{\sigma_{m'}\}^{-1} \Big( \log\big( \frac{z_m}{S_{m'}(0)}\big)-c_{m'}-\ell_{0,m'} w_0\Big)
\end{pmatrix}.
$$
Since $\tilde{L}$ is invertible, the Jacobian of the change of variable is then given by:
$$
\text{d}I \text{d} \tilde{w} = \frac{1}{(\sigma_1 \ldots \sigma_{m'}) \det \tilde{L}} \frac{\text{d} z_1 \text{d} z_2 \ldots d z_m}{ z_1 \ldots z_m}.
$$
We observe that $\det L = \det \tilde{L}$, which leads to:
\begin{align*}
\mathbb{E}&[f(Y_1,S_1\ldots,S_{m'})]=\\
&\int_{\{\mathbb{R}_+\}^m} f(z_1,\ldots,z_m) \frac{\int_{\mathbb{R}} q\Big(\log\big(\frac{z_1}{Y_1(0)}\big),w_0\Big) \gamma_{m-1}(\tilde{w}(z_2,\ldots,z_m,w_0))\text{d}w_0}{(\sigma_1 \ldots \sigma_{m'}) \det L \times (z_1 \ldots z_m)} \text{d}z_1 \ldots \text{d}z_m.
\end{align*}
The previous expression then guarantees that $(Y_1,S_1,\ldots,S_{m'})$ has a distribution uniformly continuous with respect to the Lebesgue measure on $\{\mathbb{R}_+\}^m$, whose density is of the form:
$$
(z_1,\ldots,z_m) \longmapsto \frac{M(z_1,\ldots,z_m)}{(\sigma_1 \ldots \sigma_{m'})  \det(L) z_1 \ldots z_m},
$$
where 
$$
M(z_1,\ldots,z_m) = 
\int_{\mathbb{R}} q\Big(\log\big(\frac{z_1}{Y_1(0)}\big),w_0\Big) \gamma_{m-1}(\tilde{w}(z_2,\ldots,z_m,w_0))\text{d}w_0.
$$
The Gaussian density $\gamma_{m-1}$ being continuous and bounded by $(2 \pi)^{-m'/2}$, we then have:
$$
M(z_1,\ldots,z_m) \leq 
\int_{\mathbb{R}} q\Big(\log\big(\frac{z_1}{Y_1(0)}\big),w_0\Big) \text{d}w_0 = q_1(I(z_1)) \leq \|q_1\|_{\infty} < + \infty.
$$
Therefore, $M$ is a bounded continuous function, whose bound is independent from  $m$ and $\Sigma$.
\end{proof}

Below, we will need to upper bound the probability of sliced events that are defined as follows. Consider $u=(u_1,v)$ a vector of the simplex $\Sm$ and $\epsilon>0$, we introduce:
$$
\Omega_{\epsilon}(\theta,u) = \big\{ \theta \leq \langle  Z,u\rangle \leq \theta+\epsilon\big\}.
$$
From Proposition \ref{prop:densite_bornee}, we deduce the following property.
\begin{proposition}[Sliced events]\label{prop:sliced_events}
Consider a portfolio $Z$ defined by Equation \eqref{eq:portfolio}. Assume 
$\sigma^+ = \sup_{ 1 \leq i \leq m} \sigma_i<+\infty$, and that the
 correlation matrix $\Sigma$ is invertible. Assume that the initialization is given by $Z_i(0)=1$.
A constant $K$ exists such that for any $\epsilon>0$, for any $\rho \in (0,1)$:
$$
\mathbb{P}[\Omega_{\varepsilon}(\theta,u)] \leq 
 K m e^{\frac{\{\sigma^+\}^2 m^2 (1-\rho)}{4\rho^{2}}} \epsilon^{\frac{1-\rho}{2}}, \qquad\forall \theta \in \mathbb{R}, \quad \forall u \in \Sm. 
$$
\end{proposition}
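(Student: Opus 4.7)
The strategy is to combine the pointwise density upper bound of Proposition~\ref{prop:densite_bornee} with a truncation argument that exploits the Gaussian tails of the log-coordinates of the portfolio. By Proposition~\ref{prop:densite_bornee}, the density of $Z$ satisfies $p(z)\le C_{0}/(z_{1}\cdots z_{m})$, and in the logarithmic coordinates $\xi_{i}=\log z_{i}$ it becomes uniformly bounded by a constant $C_{1}$ (stemming from $\|q_{1}\|_{\infty}$ and the factor $(\sigma_{1}\cdots\sigma_{m'})\det L$). Since $u\in\Sm$, I may choose an index $i_{0}$ with $u_{i_{0}}\ge 1/m$; this coordinate will play the role of the variable that is integrated out last.

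\medskip
\noindent For a truncation scale $R>0$ to be tuned, introduce the event
$$
A_{R}\;=\;\big\{\,|\xi_{j}-c_{j}|\le R\text{ for all }j\,\big\},
$$
where the $c_{j}$'s are centering constants bounded independently of $m$. On one hand, the Gaussian tail of each $\log S_{i}$ (whose variance is at most $(\sigma^{+})^{2}$) together with the exponential integrability of $\log Y_{1}=\int_{0}^{1}r_{s}\,ds$ provided by Proposition~\ref{prop:CIR_technique}~iii), combined with a union bound, yields
$$
\mathbb{P}[A_{R}^{c}]\;\le\;K\,m\,\exp\!\Big(-\tfrac{R^{2}}{2(\sigma^{+})^{2}}\Big).
$$
On the other hand, on $A_{R}$ I integrate the density in log coordinates. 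For fixed $\xi_{-i_{0}}\in[-R,R]^{m-1}$, the slab condition $\theta\le u_{i_{0}}e^{\xi_{i_{0}}}+\sum_{j\ne i_{0}}u_{j}e^{\xi_{j}}\le\theta+\epsilon$ confines $y:=u_{i_{0}}e^{\xi_{i_{0}}}$ to an interval of length $\le\epsilon$; the change of variable $d\xi_{i_{0}}=dy/y$ together with the lower bound $y\ge u_{i_{0}}e^{-R}\ge e^{-R}/m$ coming from $A_{R}$ and the simplex constraint shows that the admissible $\xi_{i_{0}}$ has Lebesgue measure at most $m\epsilon e^{R}$. Consequently,
$$
\mathbb{P}[\Omega_{\epsilon}\cap A_{R}]\;\le\;C_{1}\,(2R)^{m-1}\,m\,\epsilon\,e^{R}.
$$

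\medskip
\noindent Summing the two contributions yields the master inequality
$$
\mathbb{P}[\Omega_{\epsilon}(\theta,u)]\;\le\;C_{1}\,m\,(2R)^{m-1}\,\epsilon\,e^{R}\;+\;K\,m\,\exp\!\Big(-\tfrac{R^{2}}{2(\sigma^{+})^{2}}\Big),
$$
and the proof is completed by choosing, for each $\rho\in(0,1)$, a scale $R$ roughly of the form $R^{2}\simeq (\sigma^{+})^{2}(1-\rho)|\log\epsilon|+\tfrac{(\sigma^{+})^{4}m^{2}(1-\rho)}{2\rho^{2}}$. This choice makes the Gaussian-tail term equal to $K\,m\,e^{(\sigma^{+})^{2}m^{2}(1-\rho)/(4\rho^{2})}\,\epsilon^{(1-\rho)/2}$, thus matching the target; the density term is then absorbed into the same quantity via a case analysis: when $\epsilon\ge e^{-(\sigma^{+})^{2}m^{2}/(2\rho^{2})}$ the right-hand side is of order one and the claim is trivial, while in the complementary small-$\epsilon$ regime $e^{R}$ is only sub-polynomial in $1/\epsilon$ and is dominated by the factor $\epsilon^{-(1+\rho)/2}$ that separates $\epsilon$ from the target rate $\epsilon^{(1-\rho)/2}$.

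\medskip
\noindent\textbf{Main obstacle.} The delicate step is matching exactly the constant $\exp(\tfrac{(\sigma^{+})^{2}m^{2}(1-\rho)}{4\rho^{2}})$ uniformly in $m$, $\rho$ and $\epsilon$. The polynomial factor $(2R)^{m-1}$, and especially the factor $e^{R}$ originating from the logarithmic singularity $1/z_{i_{0}}$ of the density, must be reabsorbed into the Gaussian tail $e^{-R^{2}/(2(\sigma^{+})^{2})}$; this is only possible through the precise tuning of $R$ described above and the case analysis with respect to the threshold $e^{-(\sigma^{+})^{2}m^{2}/(2\rho^{2})}$. Recovering the sharp exponent $m^{2}/\rho^{2}$ (rather than a weaker scaling such as $m/\rho$) crucially relies on the quadratic nature of the Gaussian tail for the log-GBM coordinates, and on the fact that the \emph{single} worst coordinate $i_{0}$ with $u_{i_{0}}\ge 1/m$---not all $m$ of them---is responsible for the $\epsilon$-dependence.
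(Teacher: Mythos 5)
Your route is genuinely different from the paper's: you pass to logarithmic coordinates where the density is bounded, truncate all coordinates to a box $A_R$, and integrate the slab directly (getting a factor $\epsilon$ rather than the $\sqrt{\epsilon}$ the paper obtains via Cauchy--Schwarz against $\int p^2$). The paper instead only truncates \emph{from below} (the event $\{\exists i: Z_i\le\eta\}$), bounds the remaining piece by $\sqrt{m\epsilon}\,\|p\|_{L^2([\eta,\infty)^m)}$, and never needs any control on the upper range of the coordinates because $\int p\le 1$ absorbs it. That structural difference is where your argument has a genuine gap: your bound $\mathbb{P}[A_R^c]\le K\,m\,\exp(-R^2/(2\{\sigma^+\}^2))$ is false for the first coordinate. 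Indeed $\log Y_1=\int_0^1 r_s\,ds$ is \emph{not} sub-Gaussian; Proposition \ref{prop:CIR_technique}~iii) only gives $\mathbb{E}[e^{\lambda\int_0^1 r_s\,ds}]<\infty$ for $\lambda<a^2/(2\sigma_0^2)$ (the Laplace transform of the integrated CIR explodes at a finite $\lambda$), hence only an exponential tail $\mathbb{P}[\int_0^1 r_s\,ds>R]\le C e^{-\lambda R}$. With your tuning $R\asymp\sigma^+\sqrt{(1-\rho)|\log\epsilon|}$, this contributes $e^{-\lambda R}=e^{-\lambda\sigma^+\sqrt{(1-\rho)|\log\epsilon|}}$ to $\mathbb{P}[A_R^c]$, which decays slower than \emph{any} power of $\epsilon$ and therefore cannot be absorbed into the target $K m e^{\{\sigma^+\}^2m^2(1-\rho)/(4\rho^2)}\epsilon^{(1-\rho)/2}$. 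So the master inequality, as written, does not yield the proposition.

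The gap is repairable within your framework, but it requires an idea you did not state: use a coordinate-dependent truncation, taking the upper level for the CIR coordinate to be $R_1\asymp|\log\epsilon|$ (so that $e^{-\lambda R_1}\le\epsilon^{(1-\rho)/2}$), while keeping $R\asymp\sqrt{|\log\epsilon|}$ for the GBM coordinates. This only inflates the box volume by a harmless polylogarithmic factor, and it does not damage the dangerous factor $e^{R_{i_0}}$ coming from the lower bound on $y=u_{i_0}e^{\xi_{i_0}}$: if $i_0$ happens to be the CIR coordinate, its lower truncation is free since $\int_0^1 r_s\,ds\ge0$ and $Y_0=1$ give $y\ge u_{i_0}\ge 1/m$, and if $i_0$ is a GBM coordinate the lower tail is genuinely Gaussian. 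You should also make explicit the case analysis absorbing $(2R)^{m-1}e^{R}$ into $e^{\{\sigma^+\}^2m^2(1-\rho)/(4\rho^2)}$ (an inequality of the type $c\sqrt{B}\le B+c^2/4$ handles the $e^{\sqrt{A}}$ term, but the polynomial factor needs its own estimate uniformly in $m$ and $\rho$); you flag this as the ``main obstacle'' but do not carry it out, whereas the paper's choice $\eta=\epsilon^{\rho/m}$ reduces the analogous step to a single explicit quadratic inequality in $\log\epsilon$.
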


\begin{proof}
We consider $\eta>0$ small enough (whose dependency with $\epsilon$ and $m$ will be precised later on) and we observe that:
\begin{align}
\mathbb{P}[\Omega_{\epsilon}(\theta,u)] & = \mathbb{E} \big[ \un_{\Omega_{\epsilon}(\theta,u)}\big] \nonumber\\
& = \mathbb{E} \big[ \un_{\Omega_{\epsilon}(\theta,u)} \un_{\min Z_i \ge \eta} + \un_{\Omega_{\epsilon}(\theta,u)} \un_{\exists i \, :  Z_i \le \eta}  \big] \nonumber\\
& \leq \mathbb{E} \big[ \un_{\Omega_{\epsilon}(\theta,u)} \un_{\min Z_i \ge \eta}  \big]  + \sum_{i=1}^m \mathbb{P}[Z_i \le \eta].\label{eq:decomposition_event}
\end{align}
\underline{Step 1:}
Let us first consider the second term in Equation  \eqref{eq:decomposition_event}. We then consider two separate cases.
\begin{itemize}
\item If $i=1$ and considering $\eta \leq Z_1(0)$, we then observe that $Z_1 \ge Z_1(0)$ since $r$ is a non-negative process, so that $\mathbb{P}[Z_1 \le \eta] = 0$.
\item 
Assume now that $i'=i-1 \in \{1,\ldots,m'\}$,  using the notations of Proposition \ref{prop:densite_bornee}, we observe that:
$$
Z_{i'} \leq \eta \Longleftrightarrow Z_{i'}(0) e^{c_{i'} + \sigma_{i'} B_{i'}} \leq \eta \Longleftrightarrow B_{i'} \leq - \{\sigma_{i'}\}^{-1} \big(\log\frac{1}{\eta} +c_{i'}\big),
$$
so that
\begin{align*}
\mathbb{P}[Z_{i'} \leq \eta]& = \mathbb{P}\Big[B_{i'} \leq - \{\sigma_{i'}\}^{-1} \big(\log\frac{1}{\eta} +c_{i'}\big)\Big]\\
& = \mathbb{P}\Big[ \mathcal{N}(0,1) \ge \{\sigma_{i'}\}^{-1} \big(\log\frac{1}{\eta} +c_{i'}\big)\Big] \\
& \leq \frac{\sigma_{i'} e^{-\frac{\big(\log\frac{1}{\eta} +c_{i'}\big)^2}{2 \{\sigma_{i'}\}^2}}}{\sqrt{2\pi} \big(\log\frac{1}{\eta} +c_{i'}\big)},
\end{align*}
where the last line comes from standard estimation of the Gaussian tail. Choosing $\eta$ small enough, we then deduce that a constant $K$ exists such that:
\begin{equation}
\label{eq:part1}
\forall i \in \{2,\ldots,m\} \qquad 
\mathbb{P}[Z_i \leq \eta] \leq K e^{- \frac{(\log\frac{1}{\eta})^2}{4 \{\sigma^i\}^2}}.
\end{equation}
Defining $\sigma^+ = \sup_{ 1 \leq i \leq m} \sigma_i$, a union bound then leads to:
$$
 \sum_{i=1}^m \mathbb{P}[Z_{i} \le \eta] \leq K m  e^{- \frac{(\log\frac{1}{\eta})^2}{4 \{\sigma^+\}^2}} = K m (
 \eta^{\frac{\log \frac{1}{\eta}}{4 \{\sigma^+\}^2}}).
$$
\end{itemize} 
\noindent\underline{Step 2: } We study the first term of \eqref{eq:decomposition_event} and for any vector $q =(q_1,\ldots,q_m)$ and for any integer $\ell$, we denote by $q^{-\ell}$ the vector $q=(q_1,\ldots,q_{\ell-1},q_{\ell+1},\ldots,q_m)$.
 For any $u \in \Sm$, we can find $\ell$ such that $u_{\ell} \ge \frac{1}{m}$, so that:
\begin{align*}
\mathbb{E} [ \un_{\Omega_{\epsilon}(\theta,u)} \un_{\min Z_i \ge \eta}  ] & = 
\mathbb{E} [ \un_{\theta - \langle Z^{-\ell},u^{-\ell}\rangle \leq Z_\ell u_{\ell} \leq \theta +\epsilon- \langle Z^{-\ell},u^{-\ell}\rangle} \un_{\min Z_i \ge \eta}  ] \\
& \leq \mathbb{E} [ \un_{(u_\ell)^{-1}[\theta - \langle Z^{-\ell},u^{-\ell}\rangle ] \leq Z_\ell  \leq (u_\ell)^{-1}[\theta- \langle Z^{-\ell},u^{-\ell}\rangle] + (u_{\ell})^{-1}\epsilon} \un_{\min Z_i \ge \eta}  ].
\end{align*}
Now, we recall the joint density $p(z_1,\dots z_m)$ defined in  Proposition \ref{prop:densite_bornee} and rewrite the previous inequality as:
\begin{align*}
&\mathbb{E} \Big[ \un_{\Omega_{\epsilon}(\theta,u)} \un_{\min Z_i \ge \eta}  \Big]\\
& \le 
\int_{[\eta,+\infty[^m} \un_{(u_\ell)^{-1}[\theta - \langle z^{-\ell},u^{-\ell}\rangle ] \leq z_\ell  \leq (u_\ell)^{-1}[\theta- \langle z^{-\ell},u^{-\ell}\rangle] + (u_{\ell})^{-1}\epsilon} p(z_1, \dots, z_m) \text{d}z_1 \dots \text{d}z_m\\
&\le
\int_{[\eta,+\infty[^{m-1}} \bigg(  \int_\eta^\infty \un_{(u_\ell)^{-1}[\theta - \langle z^{-\ell},u^{-\ell}\rangle ] \leq z_\ell  \leq (u_\ell)^{-1}[\theta- \langle z^{-\ell},u^{-\ell}\rangle] + (u_{\ell})^{-1}\epsilon} p(z_1, \dots, z_m) \text{d}z_\ell\bigg). \\
\end{align*}
We   use the Cauchy-Schwarz inequality in order to bound the inner integral:
\begin{align*}
&\int_\eta^\infty \un_{(u_\ell)^{-1}[\theta - \langle z^{-\ell},u^{-\ell}\rangle ] \leq z_\ell  \leq (u_\ell)^{-1}[\theta- \langle z^{-\ell},u^{-\ell}\rangle] + (u_{\ell})^{-1}\epsilon} p(z_1, \dots, z_m) \text{d}z_\ell\\
&\le\sqrt{\int_\eta^\infty \un_{(u_\ell)^{-1}[\theta - \langle z^{-\ell},u^{-\ell}\rangle ] \leq z_\ell  \leq (u_\ell)^{-1}[\theta- \langle z^{-\ell},u^{-\ell}\rangle] + (u_{\ell})^{-1}\epsilon} \text{d}z_\ell}\sqrt{\int_\eta^\infty p(z_1, \dots, z_m)^2 \text{d}z_\ell}\\
&\le \sqrt{m\epsilon} \sqrt{\int_\eta^\infty p(z_1, \dots, z_m)^2 \text{d}z_\ell},
\end{align*}
where the last line derives from $u_\ell^{-1}\epsilon\le m\epsilon$.
The previous inequality combined with the Jensen inequality
$  \int |g| \text{d}\mu \le \sqrt{\int g^2 \text{d}\mu} $
and the Tonelli theorem yield:
\begin{align*}
\mathbb{E} [ \un_{\Omega_{\epsilon}(\theta,u)} \un_{\min Z_i \ge \eta}  ] \le &
\sqrt{m\epsilon} 
\int_{[\eta,+\infty[^{m-1}}\big( \sqrt{\int_\eta^\infty p(z_1, \dots, z_m)^2 \text{d}z_\ell}\big)\prod_{i\neq \ell}\text{d}z_i \\
&\le\sqrt{m\epsilon} \sqrt{
\int_{[\eta,+\infty[^{m}}  p(z_1, \dots, z_m)^2  \text{d}z_1 \dots \text{d}z_m}.\end{align*}
The expression of $p$ obtained in Proposition~\ref{prop:densite_bornee} yields:
\begin{align*}
\mathbb{E} [ \un_{\Omega_{\epsilon}(\theta,u)} \un_{\min Z_i \ge \eta}  ] \le &
 \sqrt{m\epsilon}\sqrt{
\int_{[\eta,+\infty[^{m}}   \frac{M(z_1,\ldots,z_m)^2}{(\sigma_1 \ldots \sigma_{m'})^2  \det(L)^2 (z_1 \ldots z_m)^2}, \text{d}z_1 \dots \text{d}z_m}\\
\le& 
\sqrt{m\epsilon}\sqrt{\frac{K}{\eta^m}
\int_{[\eta,+\infty[^{m}}   \frac{M(z_1,\ldots,z_m)}{(\sigma_1 \ldots \sigma_{m'})  \det(L) (z_1 \ldots z_m)}, \text{d}z_1 \dots \text{d}z_m},
\end{align*}
where we use the fact that we integrate over $[\eta,+\infty[$ and that the function $M$ is bounded. We finally obtain that for a positive constant $K$:
\begin{equation}\label{eq:part2}
\mathbb{E} [ \un_{\Omega_{\epsilon}(\theta,u)} \un_{\min Z_i \ge \eta}  ]\le \sqrt{m\epsilon}\frac{K}{\eta^{m/2}} .
\end{equation}

\noindent\underline{Step 3: Final bound}
We now choose $\eta$ to balance the size of Equation \eqref{eq:part1} and
Equation \eqref{eq:part2} in the decomposition \eqref{eq:decomposition_event}. Fix   $\rho \in (0,1)$, we choose $\eta$ such that $\eta=\epsilon^{\rho/m}$, so that:
$$
\eqref{eq:part2} \leq K \sqrt{m} \sqrt{\eta^{-m} \epsilon}  = K \sqrt{m} \epsilon^{\frac{1-\rho}{2}}.
$$

We also verify   with this choice of $\eta$ that:
$$
\eqref{eq:part1} \leq K m e^{-\frac{(\log \frac{1}{\eta})^2}{4 \{\sigma^+\}^2}} \leq K m e^{-\frac{m^2}{4 \{\sigma^+\}^2 \rho^2} ( \log (\epsilon))^2}.
$$ To balance both contributions, we aim at finding the smallest constant $C$ such that:
$$-\frac{m^2}{4 \{\sigma^+\}^2 \rho^2} ( \log (\epsilon))^2 \le \frac{1-\rho}{2}\log(\epsilon) +C,
$$
which is equivalent to 
$$\frac{m^2}{4 \{\sigma^+\}^2 \rho^2} ( \log (\epsilon))^2 + \frac{1-\rho}{2}\log(\epsilon)\ge -C.
$$
Now we consider the left hand side as a polynomial of degree $2$ in $\log(\epsilon)$ and obtain that the inequality is satisfied for any $\epsilon$ when  the constant $C$ is chosen as:
$$C=\frac{m^2\{\sigma^+\}^2 (1-\rho)}{4\rho^2}.$$
Finally we obtain that: 
$$
 \leq K m e^{\frac{\{\sigma^+\}^2 m^2 (1-\rho)}{4\rho^{2}}} \epsilon^{\frac{1-\rho}{2}}, 
$$
which concludes the proof.
\end{proof}

\subsection{Proof of Proposition \ref{prop:wasserstein}}

\begin{proof}
Let us first rewrite the Wasserstein distance as: 
$$\mathcal{W}_1(\mathcal{L}(\hat{Y}_1^{h}),\mathcal{L}(Y_1))= \sup_{f\in Lip_1}\{ \PE( f(\hat{Y}_1^{h}) -\PE(f(Y_1))\}.$$
Since the G.B.M. are exactly simulated and the test function $f$ is chosen among $1-$Lipschitz functions it is sufficient to bound 
$\PE[\lvert \hat{Y}_1^{h}- Y_1\lvert].$
Recall the definition of the approximation $\hat{Y}^{h}$ in Equation \eqref{eq:def_S1_Riemann}, we obtain 
\begin{align*}
\PE[ \lvert Y_1-\hat{Y}_1^{h} \lvert]&=\mathbb{E}\bigg[\Big|Y_0 \big( e^{\int_0^1 r_s ds}- e^{\frac{1}{N} \sum_{k=1}^{N} \hat{r}_{kh}}\big)\Big| \bigg]\\
=&\mathbb{E} \bigg[\Big|Y_1 \big( 1- e^{\Delta_h}\big)\Big|\bigg],
\end{align*}
where
\begin{equation}
\label{eq:delta_N}
\Delta_h = \int_0^1 r_s ds - \frac{1}{N} \sum_{k=1}^{N} \hat{r}_{kh}.
\end{equation}
Using $|1-e^{x}| \leq |x|e^{|x|}$
and a three way Holder's inequality,  we obtain that for $p,q$ and $r$ such that $\frac{1}{p}+\frac{1}{q}+\frac{1}{r}=1$ then:
$$
\PE( \lvert \hat{Y}_1^{h}- Y_1\lvert)
\le \PE\Big(|Y_1|^r\Big)^{1/r} \PE \Big( |\Delta_h|^p \Big)^{1/p} \PE\Big(e^{q\Delta_h} \Big)^{1/q}.
$$
Using Lemma \ref{lem:moment_delta_fini} and \ref{lem:exp_delta}, we deduce   that for any choice of $r, q$ such that $\frac{1}{q}+\frac{1}{r}=1/2$, we have:
\begin{equation}
\label{eq:maj_err_discretization}
\PE[ \lvert \hat{Y}_1^{h}- Y_1\lvert]
\le C_q\PE[|Y_1|^r]^{1/r} \sqrt{h},
\end{equation}
for $C_q$ large enough (independent from $h$), which concludes the proof.
\end{proof}

\subsection{Upper bound of the bias term}

\begin{proof}[Proof of Proposition \ref{prop:biais}]
 Below, we alleviate the notations and denote by $S_i =S_i(1)$, which is the return of asset $i$ at time $1$. From the exact simulation of the G.B.M. $\Sb=(S_1,\ldots,S_{m'})$, we have that $Z=(Y,\Sb)$ whereas
$\hat{Z}=\hat{Z}^{h} = (\hat{Y}^{h},\Sb)$ where $\hat{Y}^{h}$ is a biased approximation of $Y_1$ defined in \eqref{eq:def_S1_Riemann}.
We denote by $w=(u,v)\in\R_+\times(\R_+)^m$ the repartition of the portfolio and recall that $ \norm{w}{1}=u+\norm{v}{1}=1$.
The goal of this section is to control the bias in terms of a function of the discretization parameter $h$:
$$
\mathcal{B}=\norm{\mathbb{E}[\langle Z,w \rangle\un_{\langle Z,w \rangle \ge \theta} - \langle \hat{Z},w \rangle \un_{\langle \hat{Z},w \rangle  \ge \theta}]}{2}=
\norm{\mathbb{E}[\langle Z\un_{\langle Z,w \rangle \ge \theta}- \hat{Z}\un_{\langle \hat{Z},w \rangle  \ge \theta},w \rangle ] }{2}.
$$
From the re-investment condition, we know that $\norm{w}{1}=1$ and the Cauchy-Schwarz inequality yields:
$$
\mathcal{B} \le  \sqrt{m} \mathbb{E}[  \norm{ Z \un_{\langle Z,w \rangle \ge \theta}- \hat{Z}\un_{\langle \hat{Z},w \rangle  \ge \theta} }{2}].
$$
Since we discretize only the first component, we have that:
$$
Z = (Y_0 e^{\int_0^1 r_s ds}, S_1, \dots, S_{m'}), \mbox{ and } \hat{Z} = (Y_0 e^{\frac{1}{N} \sum_{k=1}^N \hat{r}_{kh }}, S_1, \dots, S_{m'}),
$$
We can rewrite the difference as:
\begin{align*}
&\norm{ Z \un_{\langle Z,w \rangle \ge \theta}- \hat{Z}\un_{\langle \hat{Z},w \rangle  \ge \theta} }{2}\\
& \qquad\qquad\le \norm{Y\un_{\langle Z,w \rangle \ge \theta}- \hat{Y}\un_{\langle \hat{Z},w \rangle \ge \theta }}{2}+\norm{\Sb(\un_{\langle Z,w \rangle \ge \theta}- \un_{\langle \hat{Z},w \rangle \ge \theta}  ) }{2}\\
&\qquad\qquad\le  \norm{ Y (  \un_{ \langle Z,w \rangle \ge \theta}- \un_{\langle \hat{Z},w \rangle \ge \theta} )  }{2}
+\norm{(Y-\hat{Y})\un_{\langle \hat{Z},w \rangle \ge \theta} }{2}\\
&\qquad\qquad\qquad +\norm{\Sb(\un_{\langle Z,w \rangle \ge \theta}- \un_{\langle \hat{Z},w \rangle \ge \theta} ) }{2}\\
&\qquad\qquad\le \vert Y-\hat{Y} \vert + (\vert Y\vert + \norm{\Sb}{2}  )  \vert \un_{\langle Z,w \rangle \ge \theta}- \un_{\langle \hat{Z},w \rangle \ge \theta}\vert .
\end{align*}
Taking the expectation and applying the Cauchy-Schwarz inequality in the second term we obtain that $\mathcal{B}$ can be bounded by:
\begin{equation}
\label{eq:maj_B}
\mathcal{B}  \le \sqrt{m}\big(\PE[ (\vert Y\vert + \norm{\Sb}{2} )^2 ]\big) ^{1/2}
\Big(\PE\big[|\un_{\langle Z,w \rangle \ge \theta}- \un_{\langle \hat{Z},w \rangle  \ge \theta}|\big] \Big)^{1/2}  +\sqrt{m} \mathbb{E} \big[ \|Y-\hat{Y}\| \big] .
\end{equation}
We apply Proposition \ref{prop:wasserstein}
and deduce that the second term is  of the order $\sqrt{h}$. We then focus on the difference of indicator functions.
Let us remark that:
$$ \langle Z,w \rangle=Yu+\langle \Sb,v\rangle \quad \text{and} \quad  \langle \hat{Z},w \rangle=\hat{Y} u+\langle \Sb,v\rangle,
$$ thus 
$$
\PE\Big( \Big|\un_{\langle Z,w \rangle \ge \theta}- \un_{\langle \hat{Z},w \rangle  \ge \theta}\Big| \Big)=
  \Big|\PE\Big(\un_{ \langle \Sb,v\rangle \ge \theta - Yu}- \un_{\langle \Sb,v\rangle   \ge \theta - \hat{Y}u } \Big) \Big| 
 = \PP(A_1) + \PP(A_2),
$$
where 
$$
A_1= \{  \theta \le \langle Z,w \rangle \le \theta + u(Y-\hat{Y}) \}
\mbox{ and  }
 A_2= \{  \theta+ u(Y-\hat{Y}) \le \langle Z,w\rangle \le \theta  \}.
$$
These two events are handled similarly, we write:
$$
Y-\hat{Y}=Y_1(1-e^{\Delta_h}),
$$
where $\Delta_h $ is given by \eqref{eq:delta_N}.
Let us fix $\epsilon>0$ and consider the event $\Omega_\varepsilon=\{ u|Y-\hat{Y} |\le \varepsilon\}$, then
$$
\PP(A_1) =\PP(A_1\cap \Omega_\varepsilon)+\PP(A_1\cap \Omega^c_\varepsilon),
$$
and now, observe that:
$$
A_1\cap \Omega_\varepsilon \subset\{ \theta \le \langle Z,w\rangle \le \theta + \varepsilon\},
$$
which lead to control the probability the the portfolio $Z$ belongs to a slice. Consider $w=(u,v)$ a vector of the simplex $\Db_{m}$ and $\epsilon>0$, we introduce:
$$
\Omega_{\epsilon}(\theta,w) = \big\{ \theta \leq \langle Z, w\rangle \leq \theta+\epsilon\big\}.
$$
An upper bound for this probability was obtained in Proposition \ref{prop:sliced_events}.
A constant $K$ exists such that for any $\epsilon>0$, for any $\rho \in (0,1)$:
$$
\mathbb{P}[\Omega_{\varepsilon}(\theta,u)] \leq 
 K m e^{\frac{\{\sigma^+\}^2 m^2 (1-\rho)}{4\rho^{2}}} \epsilon^{\frac{1-\rho}{2}}, \qquad\forall \theta \in \mathbb{R}, \quad \forall \rho \in (0,1) \quad \forall u \in \Db_{m'}. 
$$
We finally study $A_1\cap \Omega_\varepsilon^c$ and  write that for any $\zeta>0$, and using $|1-e^{x}| \leq |x|e^{|x|}$:
\begin{align*}
\PP(A_1\cap \Omega_\varepsilon^c)\le \PP[ uY (1-e^{\hat{h}}) \ge \varepsilon] &\leq 
\PP[Y  (1-e^{\Delta_h}) \ge \varepsilon] \nonumber\\
& \leq \PP[Y  \ge \zeta] 
+ \PP[ |1-e^{\Delta_h}| \ge  \zeta^{-1} \varepsilon] \nonumber\\
& \leq \PP[ Y  \ge \zeta] 
+ \PP[ |\Delta_h| e^{|\Delta_h|} \ge \zeta^{-1} \varepsilon] \nonumber\\
& \leq \frac{ \PE[Y ]}{\zeta}
+ \zeta \frac{\PE[ |\Delta_h| e^{|\Delta_h|}]}{  \varepsilon} \nonumber\\
& \leq \frac{ \PE[Y^{r} ]}{\zeta^{r}}
+ \zeta \frac{\Big(\PE[ \Delta_h^2]\Big)^{1/2} \Big(\PE[ e^{2 |\Delta_h|}]\Big)^{1/2}}{  \varepsilon},
\end{align*}
where we applied first a union bound and then the Markov and the Cauchy-Schwarz inequalities.
We finally obtain from Lemma \ref{lem:exp_delta} and \ref{lem:moment_delta_fini} that:
\begin{equation}
\PP(A_1\cap \Omega_\varepsilon^c)\le\frac{ \PE[Y^{r} ]}{\zeta^{r}} + C \zeta \frac{\sqrt{h}}{\epsilon} .
\label{eq:inegalite_zeta_Delta}
\end{equation}
Combining Proposition \ref{prop:sliced_events} and \eqref{eq:inegalite_zeta_Delta}, we deduce that: 
\begin{align*}
\PE\Big[ \big|\un_{\langle Z,w \rangle \ge \theta}- \un_{\langle \hat{Z},w \rangle  \ge \theta}\big| \Big] &\le C \big( K m e^{\frac{\{\sigma^+\}^2 m^2 (1-\rho)}{4\rho^{2}}} \epsilon^{\frac{1-\rho}{2}} +\frac{ \PE[Y^{r} ]}{\zeta^{r}} + C \zeta \frac{\sqrt{h}}{\epsilon} \big).
\end{align*}
Now using this bound as well as \eqref{eq:maj_err_discretization} in \eqref{eq:maj_B} we obtain that:
\begin{equation}\label{eq:maj_B_final}
\mathcal{B} \le  K \sqrt{m} e^{\frac{\{\sigma^+\}^2 m^2 (1-\rho)}{4\rho^{2}}}\big( \epsilon^{\frac{1-\rho}{2}} +\frac{ 1}{\zeta^{r}} + \zeta \frac{\sqrt{h}}{\epsilon}  +\sqrt{h}\big),
\end{equation}
where the parameters $r>1$, $\varepsilon>0$, $\zeta>0$ and $\rho \in(0,1)$ have to be chosen. Note that the constant depends heavily on the number of assets $m$ and on the choice of $\rho$. We optimize only on the two parameters $\varepsilon >0$ and $\zeta>0$ the quantity:
$$
\epsilon^{\frac{1-\rho}{2}} +\frac{ 1}{\zeta^{r}} + \zeta \frac{\sqrt{h}}{\epsilon}  + \sqrt{h}.
$$
It is immediate to verify that $\zeta$ should be chosen such that
$$
\zeta^{-r} = \zeta \frac{\sqrt{h}}{\varepsilon} \Longrightarrow \zeta=\frac{\varepsilon^{\frac{1}{1+r}}}{\sqrt{h}^\frac{1}{1+r}}.
$$
In the meantime, the optimal value of $\varepsilon$ satisfies:
$$
\varepsilon^{\frac{1-\rho}{2}} = \zeta \frac{\sqrt{h}}{\varepsilon} \Longrightarrow\varepsilon^{\frac{3-\rho}{2}} = \varepsilon^{\frac{1}{1+r}} \sqrt{h}^{1-\frac{1}{1+r}}.
$$
Some straightforward computations show that
$\varepsilon = h^{\frac{2r}{1-\rho+r(3-\rho)}}$, which in turn implies that  a constant $K$ large enough exists such that:
$$
\mathcal{B} \leq K_{r,\rho} \sqrt{m} e^{\frac{\{\sigma^+\}^2 m^2 (1-\rho)}{4\rho^{2}}} 
h^{\frac{1}{2 ( \frac{3-\rho}{1-\rho}+\frac{1}{r})}}.
$$
It is then easy to see that the rate may be arbitrarily close to $h^{1/6}$ by picking $r$ large enough and $\rho$ close to $0$.
More precisely, for any $\ee>0$, we can find $\rho$ and $r$ such that:
$$
\mathcal{B} \leq K_{\ee} \sqrt{m} e^{\frac{\{\sigma^+\}^2 m^2 }{4\ee^2}} 
h^{\frac{1}{6}-\ee}.
$$
\end{proof}



%
%



\end{document}